\newcommand{\marg}[1]{}
\newcommand{\note}[1]{} 				
\newcommand{\defi}[1]{\textsf{#1}} 				% for defined terms
\newcommand{\C}{{\mathbb C}}
\newcommand{\F}{{\mathbb F}}
\newcommand{\mm}{{\mathfrak m}}
\newcommand{\calM}{{\mathcal M}}
\newcommand{\calO}{{\mathcal O}}
\newcommand{\calS}{{\mathcal S}}
\newcommand{\calV}{{\mathcal V}}
\newcommand{\calX}{{\mathcal X}}
\newcommand{\calY}{{\mathcal Y}}
\newcommand{\scrH}{{\mathscr H}}
\def\C{\mathbb{C}}
 \DeclareMathOperator{\Hom}{Hom}
\DeclareMathOperator{\HOM}{\scrH om}
\DeclareMathOperator{\Spec}{Spec}
\DeclareMathOperator{\Mod}{Mod}
\DeclareMathOperator{\Open}{Open}
\DeclareMathOperator{\Coh}{Coh}
\DeclareMathOperator{\id}{id}
\DeclareMathOperator{\Th}{th}
\newcommand{\Sch}{\operatorname{\bf Sch}}
\newcommand{\Sets}{\operatorname{\bf Sets}}
\newcommand{\Ab}{\operatorname{Ab}}
\newcommand{\op}{{\operatorname{op}}}
\newcommand{\an}{{\operatorname{an}}}
\newcommand{\ab}{{\operatorname{ab}}}
\DeclareMathOperator{\MM}{M}
\DeclareMathOperator{\AN}{AN} % need to reconcile these
\DeclareMathOperator{\Spf}{Spf}
\DeclareMathOperator{\Isoc}{Isoc}
\DeclareMathOperator{\QCoh}{QCoh}
\DeclareMathOperator{\fp}{fp}
\DeclareMathOperator{\g}{g}
\DeclareMathOperator{\e}{e}
\DeclareMathOperator{\rig}{rig}
\DeclareMathOperator{\Fun}{Fun}
\DeclareMathOperator{\colim}{colim}
\DeclareMathOperator{\Cov}{Cov}
\DeclareMathOperator{\set}{set}
\DeclareMathOperator{\Ber}{Ber}
\DeclareMathOperator{\Strat}{Strat}
\DeclareMathOperator{\MIC}{MIC}
\DeclareMathOperator{\Cris}{Cris}
\DeclareMathOperator{\Mor}{Mor}
 \numberwithin{equation}{subsection}
\newtheorem{theorem}[subsection]{Theorem}
\newtheorem{lemma}[subsection]{Lemma}
\newtheorem{corollary}[subsection]{Corollary}
\newtheorem{proposition}[subsection]{Proposition}
\theoremstyle{definition}
\newtheorem{definition}[subsection]{Definition}
\newtheorem{example}[subsection]{Example}
\theoremstyle{remark}
\newtheorem{remark}[subsection]{Remark}
\newcommand{\labelpar}[1]{\refstepcounter{subsection}\label{#1}\thesubsection.}
\begin{document}

\title[Cohomology with closed support on the overconvergent site]
         {Cohomology with closed support on the overconvergent site}

%\subjclass[2000]{Primary 55T10; Secondary 55-01}
%keywords{}
\author{David Zureick-Brown}
\address{Dept. of Mathematics and Computer science, Emory University,
Atlanta, GA 30322 USA}

\date{\today}

\begin{abstract}
Using the notions of open/closed subtopoi of SGA, we define a notion of
cohomology with support in a   closed subscheme on the overconvergent site, and show
that this agrees with the classic notion of rigid cohomology support in a   closed subscheme.
\end{abstract}

\maketitle

% \tableofcontents

%%% Local Variables: 
%%% mode: latex
%%% TeX-master: "overconvergent-cohomology-closed-supports"
%%% End: 

%%%%%%%%%%%%%%%%%%%%%%%%%%%%%%%%%%%%%%%%%%%%%%%%%%%%%%%%%%%%%%%%%%%%%%%%%%
\section{Introduction}
\label{C:introduction}
%%%%%%%%%%%%%%%%%%%%%%%%%%%%%%%%%%%%%%%%%%%%%%%%%%%%%%%%%%%%%%%%%%%%%%%%%%

The pursuit of a Weil, or `topological', cohomology theory in algebraic geometry was a driving factor in the development of Grothendieck's notion of a scheme and the subsequent ideas which permeate modern algebraic geometry and number theory. The initial success was \'etale cohomology and the subsequent proof of the Weil conjectures -- that for a prime power $q = p^r$ and a variety $X$ over the finite field $\F_q$, the numbers $X(\F_{q^n})$ of $\F_{q^n}$-points of $X$ (i.e., the number of $\F_{q^n}$-valued solutions to the polynomial equations defining $X$) are governed by strict and surprising formulas: they depend on the dimensions of the singular cohomology $H^i(X'(\C); \C)$ spaces of a lift $X'$ of $X$ to characteristic zero (when, of course, such a lift exists).

Applications abound (see e.g. \cite{Illusie:crystalSurvey}*{1.3.8},  \cite{Illusie:deRhamWitt}*{II.7.3},  \cite{Kedlaya:zeta}, \cite{Kedlaya:hyperellipticCounting}*{Section 5}, \cite{abbotKR:picard}). Le Stum's recent foundational advance \cite{leStum:site} characterizes rigid cohomology as the cohomology of a site $\AN^{\dagger}(X)$, removing many of the technical foundations of the theory and  leading to simple proofs of once difficult theorems and allowing new extensions and applications of rigid cohomology.

Cohomology \emph{supported in a closed subspace} plays a key role in the theory of rigid cohomology (e.g., in Kedlaya's proof of finite dimensionality of rigid cohomlogy with coefficients \cite{kedlaya:finitenessCoefficients}) and is a natural first piece of technology to adapt to the overconvergent site.  Our main result is an extension of le Stum's work to cohomology with support in a closed subscheme. We define, for a closed immersion $Z \hookrightarrow X$, the overconvergent cohomology of $X$ with supports in $Z$ and prove that it agrees with the classical, concrete definitions due to Berthelot.

%%%%%%%%%%%%%%%%%%%%%%%%%%%%%%%%%%%%%%%%%%%%%%%%%%%% 
\begin{theorem}
\label{P:agreementClosedSupportsCohomology}
%%%%%%%%%%%%%%%%%

Let the assumptions be as in Corollary \ref{P:agreementClosedSupportsIsoc}. Let $p_{\AN^{\dagger}}\colon \AN^{\dagger}X_{P_K}/S_K \to \AN^{\dagger}(S_k,S_K)$ be the induced morphism of sites. Then there is a functorial isomorphism
\[
\left(\mathbb{R}p_{\rig}\underline{\Gamma}^{\dagger, \Ber}_Z E_0\right)^{\an} \cong \left(\mathbb{R}p_{\AN^{\dagger}_*}\underline{\Gamma}^{\dagger}_ZE\right)_{S_k,S_K}
\]
which is compatible with the excision exact sequence of Proposition \ref{P:excisionRelationsDagger} (ii) (and its rigid analogue \cite{leStum:rigidBook}*{Proposition 6.3.9}).

%%%%%%%%%%%%%%%%%    
\end{theorem}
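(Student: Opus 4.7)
The plan is to bootstrap from the already-established agreement of coefficients with closed support (Corollary \ref{P:agreementClosedSupportsIsoc}) to an agreement after taking $\mathbb{R}p_*$. The key organizing principle is that the two excision distinguished triangles --- the one from Proposition \ref{P:excisionRelationsDagger}(ii) on the overconvergent side, and the rigid analogue \cite{leStum:rigidBook}*{Proposition 6.3.9} on the Berthelot side --- exhibit the support cohomology as the fiber of the restriction map from total cohomology to cohomology of the open complement $U := X \setminus Z$, and both of these latter objects are already controlled by le Stum's existing comparison for unsupported cohomology.

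The first step is to produce a candidate comparison morphism $\phi$. By Corollary \ref{P:agreementClosedSupportsIsoc}, there is already an identification of the input data $\underline{\Gamma}^{\dagger, \Ber}_Z E_0$ and $\underline{\Gamma}^{\dagger}_Z E$ (the latter evaluated at $(S_k, S_K)$). Applying $\mathbb{R}p_{\rig}$ on the Berthelot side, $\mathbb{R}p_{\AN^{\dagger}_*}$ on the overconvergent side, and composing with analytification on the former, produces the desired $\phi$.

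The second step is to fit $\phi$ into a morphism of excision distinguished triangles whose remaining two vertices are the unsupported cohomologies of $X$ and of $U$. On those vertices the comparison between overconvergent and analytified rigid cohomology is exactly le Stum's theorem, applied separately to $X$ and to $U$. The five-lemma for distinguished triangles in the derived category then upgrades the two known isomorphisms into the desired isomorphism on the third vertex, and compatibility with the excision sequence is built into the construction.

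The hard part will be the compatibility bookkeeping: verifying that $\phi$ genuinely intertwines the two excision triangles, i.e.\ respects the connecting morphisms. This reduces to checking that $j_*$ (for the open immersion $U \hookrightarrow X$) commutes with the relevant derived pushforwards under the morphism of topoi identifying the open/closed decomposition on $\AN^{\dagger}(X_{P_K}/S_K)$ with its rigid analogue via tubes and strict neighborhoods, together with a compatibility of the analytification functor $(-)^{\an}$ with $\mathbb{R}p_{\rig}$. Once these are in place, the remainder of the argument is a formal diagram chase.
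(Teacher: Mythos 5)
Your proposal matches the paper's proof: the paper likewise compares the two excision triangles (from Proposition \ref{P:excisionRelationsDagger}(ii) and the rigid analogue of \cite{leStum:rigidBook}*{Lemma 5.2.9}), identifies the middle and right vertices via Theorems \ref{T:cohomology} and \ref{T:deRham} (using exactness of $j_*$, $]j[_*$, and $j_{U_0}^{\dagger}$ to reduce the open-complement vertex to the unsupported comparison), and concludes with the five lemma, with excision compatibility built into the construction. The only presentational difference is that the paper assembles the whole morphism of triangles at once through an intermediate de Rham-complex row, rather than first defining $\phi$ and then checking it intertwines the connecting maps.
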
    
%%%%%%%%%%%%%%%%%%%%%%%%%%%%%%%%%%

We also prove finiteness, functorality, and excision results and generalize everything to stacks; see Propositions \ref{P:functoralityClosedSupport}, \ref{P:excisionRelationsDagger}, and \ref{P:finiteness}.
\\

The main technical insight of this paper that the very general notions open and closed subtopoi of \cite{SGA4:I}*{expos\'e iv, Section 9} give a natural formalism to define cohomology supported in a closed subspace; functorality and excision then follow from the general theory of \cite{SGA4:I}, and the main work is to show that this agrees with the classical, concrete definitions due to Berthelot.

%-------------------------------------------------------------------------
\subsection{Organization of the paper}
\label{SS:organization}
%-------------------------------------------------------------------------

This paper is organized as follows. In Section \ref{S:conventions} we recall notation. In Sections \ref{S:overconvergentSite} and \ref{S:overconvergentCalculus} we recall the construction of the overconvergent site and of modules with integrable connection of \cite{leStum:site}. (We also note that Section \ref{S:overconvergentSite} is an expansion of \cite{Zureick-Brown2014}*{Section 4}.) In Section \ref{S:topoiReview} we recall the machinery of open and closed subtopoi of \cite{SGA4:I} and prove Theorem \ref{P:agreementClosedSupportsCohomology}. Section \ref{S:main} finishes with an application to the cohomology of stacks.

%-------------------------------------------------------------------------
\subsection{Acknowledgements}
\label{SS:acknowledgements}
%-------------------------------------------------------------------------

We would like to thank Bjorn Poonen, Brian Conrad, Arthur Ogus, Bernard le Stum, Bruno Chiarellotto, and Anton Geraschenko for many useful conversations and encouragement. This work formed part of the author's thesis \cite{brown:thesis}, which used  \cite{leStum:site}  to generalize rigid cohomology to algebraic stacks over fields of positive characteristic.

%****************************************************************************
%****************************************************************************
\section{Notations and conventions}
\label{S:conventions}
%****************************************************************************
%****************************************************************************

Throughout $K$ will denote a field of characteristic 0 that is complete with respect to a non-trivial non-Archimedean valuation with valuation ring $\calV$, whose maximal ideal and residue field we denote by $\mm$ and $k$. We denote the category of schemes over $k$ by $\Sch_k$. We define an \defi{algebraic variety over} $k$ to be a scheme such that there exists a \emph{locally finite} cover by schemes of finite type over $k$ (recall that a collection $\calS$ of subsets of a topological space $X$ is said to be locally finite if every point of $X$ has a neighborhood which only intersects finitely many subsets $X \in \calS$). Note that we do not require an algebraic variety to be reduced, quasi-compact, or separated.
\\

\defi{Formal Schemes}: As in \cite{leStum:site}*{1.1} we define a formal $\calV$-scheme to be a locally topologically finitely presented formal scheme $P$ over $\calV$, i.e.,  a formal scheme $P$  with a locally finite covering by formal affine schemes $\Spf A$, with $A$ topologically of finite type (i.e., a quotient of the ring $\calV\{T_1,\cdots,T_n\}$ of convergent power series by an ideal $I + \mathfrak{a}\calV\{T_1,\cdots,T_n\}$, with $I$ an ideal of $\calV\{T_1,\cdots,T_n\}$ of finite type and $\mathfrak{a}$ an ideal of $\calV$). This finiteness property is necessary to define the `generic fiber' of a formal scheme. 

We refer to \cite{EGAI}*{1.10} for basic properties of formal schemes. The first section of \cite{Berkovich:contractiblity} is another good reference. Actually, \cite{leStum:site}*{Section 1} contains everything we will need.
\\

\defi{$K$-analytic spaces}: We refer to \cite{Berkovich:nonArchEtaleCoh} (as well as the brief discussion in \cite{leStum:site}*{4.2}) for definitions regarding $K$-analytic spaces. As in \cite{leStum:site}*{4.2}, we define an \defi{analytic variety} over $K$ to be a locally Hausdorff topological space $V$ together with a maximal affinoid atlas $\tau$ which is locally defined by \emph{strictly} affinoid algebras. In Section \ref{S:overconvergentCalculus} we collect and review necessary facts from $K$-analytic geometry, and in particular we note that an analytic variety $V$ has a Grothendieck topology which is finer than its usual topology, which we denote by $V_G$ and refer to as `the $G$-topology' on $V$.
\\

\defi{Topoi}:  We follow the conventions of \cite{SGA4:I} (exposited in \cite{leStum:site}*{4.1}) regarding sites, topologies, topoi, and localization; see subsection \ref{S:topoiReview} for a review. When there is no confusion we will identify an object of a category with its associated presheaf. For a topos $T$ we denote by $\mathbb{D}_+(T)$ the derived category of bounded below complexes of objects of $\Ab T$. Often a morphism $(f^{-1},f_*)\colon (T,\calO_T) \to (T',\calO_{T'})$ of ringed topoi will satisfy $f^{-1}\calO_{T'} = \calO_T$, so that there is no distinction between the functors $f^{-1}$ and $f^*$; in this case, we will write $f^*$ for both.

%****************************************************************************
%****************************************************************************
\section{The overconvergent site}
\label{S:overconvergentSite}
%****************************************************************************
%****************************************************************************

Following \cite{leStum:site}, we make the following series of definitions; see \cite{leStum:site} for a more detailed discussion of the definitions with some examples.

%%%%%%%%%%%%%%%%%%%%%%%%%%%%%%%%%%%%%%%%%%%%%%%%%%%
\begin{definition}[\cite{leStum:site}, 1.2] 
\label{D:overconvergentVariety}
%%%%%%%%%%%%%%%%%

 Define an \defi{overconvergent variety} over $\calV$ to be a pair $(X \subset P, V \xrightarrow{\lambda} P_{K})$, where $X \subset P$ is a locally closed immersion of an algebraic variety $X$ over $k$ into the special fiber $P_k$ of a formal scheme $P$ (recall our convention that all formal schemes are topologically finitely presented over $\Spf \calV$), and $V \xrightarrow{\lambda} P_K$ is a morphism of analytic varieties, where $P_K$ denotes the generic fiber of $P$, which is an analytic space (in contrast to the Raynaud generic fiber, which is a rigid analytic space; see \cite{leStum:site}*{Section 4.2}). When there is no confusion we will write $(X,V)$ for $(X \subset P, V \xrightarrow{\lambda} P_{K})$ and $(X,P)$ for $(X \subset P, P_K \xrightarrow{\id} P_{K})$.
  Define a \defi{formal morphism} $(X',V') \to (X,V)$ of overconvergent varieties to be a commutative diagram
\[
\xymatrix{
X' \ar@{^(->}[r] \ar[d]^f & P'  \ar[d]^v &  P_K' \ar[l] \ar[d]^{v_K} & V' \ar[l] \ar[d]^u\\
X \ar@{^(->}[r] & P  &  P_K \ar[l] & V \ar[l]
}
\]   
where $f$ is a morphism of algebraic varieties, $v$ is a morphism of formal schemes, and $u$ is a morphism of analytic varieties.

  Finally, define $\AN(\calV)$ to be the category whose objects are overconvergent varieties and morphisms are formal morphisms. We endow $\AN(\calV)$ with the \defi{analytic topology}, defined to be the topology generated by families $\{(X_i,V_i) \to (X,V)\}$ such that for each $i$, the maps $X_i \to X$ and $P_i \to P$ are the identity maps, $V_i$ is an open subset of $V$, and $V = \bigcup V_i$ is an open covering (recall that an open subset of an analytic space is admissible in the $G$-topology and thus also an analytic space -- this can be checked locally in the $G$-topology, and for an affinoid this is clear because there is a basis for the topology of open affinoid subdomains).

%%%%%%%%%%%%%%%%%
\end{definition}
%%%%%%%%%%%%%%%%%%%%%%%%%%%%%%%%%%

%%%%%%%%%%%%%%%%%%%%%%%%%%%%%%%%%%
\begin{definition}[\cite{leStum:site}, Section 1.1] 
%%%%%%%%%%%%%%%%%

 The specialization map $P_K \to P_k$ induces by composition a map $V \to P_k$ and we define the \defi{tube} $]X[_V$ of $X$ in $V$ to be the preimage of $X$ under this map. The tube $]X[_{P_K}$ admits the structure of an analytic space and the inclusion $i_X\colon ]X[_{P_K}\, \hookrightarrow P_K$ is a locally closed immersion of analytic spaces (and generally not open, in contrast to the rigid case). The tube $]X[_V$ is then the fiber product $]X[_{P_K}\times_{P_K}V$ (as analytic spaces) and in particular is also an analytic space.

%%%%%%%%%%%%%%%%%
\end{definition}
%%%%%%%%%%%%%%%%%%%%%%%%%%%%%%%%%%

%%%%%%%%%%%%%%%%%%%%%%%%%%%%%%%%%%
\begin{remark}
%%%%%%%%%%%%%%%%%
  A formal morphism $(f,u)\colon (X', V') \to (X, V)$ induces a morphism $]f[_u\colon ]X'[_{V'} \to ]X[_V$ of tubes. Since $]f[_u$ is induced by $u$, when there is no confusion we will sometimes denote it by $u$.
%%%%%%%%%%%%%%%%%
\end{remark}
%%%%%%%%%%%%%%%%%%%%%%%%%%%%%%%%%%

The fundamental topological object in rigid cohomology is the tube $]X[_{V}$, and most notions are defined only up to neighborhoods of $]X[_{V}$. We immediately make this precise by modifying $\AN(\calV)$.

%%%%%%%%%%%%%%%%%%%%%%%%%%%%%%%%%%%
\begin{definition}[\cite{leStum:site}, Definition 1.3.3]
%%%%%%%%%%%%%%%%%
  	Define a formal morphism 
\[
(f,u)\colon (X', V') \to (X, V)
\] 
to be a \defi{strict neighborhood} if $f$ and $]f[_u$ are isomorphisms and $u$ induces an isomorphism from $V'$ to a neighborhood $W$ of $]X[_V$ in $V$.

%%%%%%%%%%%%%%%%%
\end{definition}
%%%%%%%%%%%%%%%%%%%%%%%%%%%%%%%%%%%

%%%%%%%%%%%%%%%%%%%%%%%%%%%%%%%%%%%%%%%%%%%%%%%%%%%%
\begin{definition}
\label{D:overconvergentSite}
%%%%%%%%%%%%%%%%%

  We define the category $\AN^{\dagger}(\calV)$ of overconvergent varieties to be the localization of $\AN(\calV)$ by strict neighborhoods (which is possible by \cite{leStum:site}*{Proposition 1.3.6}): the objects of $\AN^{\dagger}(\calV)$ are the same as those of $\AN(\calV)$ and a morphism $(X',V') \to (X,V)$ in $\AN^{\dagger}(\calV)$ is a pair of formal morphisms 
\[
(X',V') \leftarrow (X',W) \to (X,V),
\]
where $(X',W) \to (X',V')$ is a strict neighborhood. 

The functor $\AN(\calV) \to \AN^{\dagger}(\calV)$ induces the image topology on $\AN^{\dagger}(\calV)$ (defined in \ref{p:inducedTopology} to be the largest topology on $\AN^{\dagger}(\calV)$ such that the map from $\AN^{\dagger}(\calV)$ is continuous. By \cite{leStum:site}*{Proposition 1.4.1}, the image topology on $\AN^{\dagger}(\calV)$ is generated by the pretopology of collections $\{(X,V_i) \to (X,V)\}$ with $\bigcup V_i$ an open covering of a neighborhood of $]X[_V$ in $V$ and $]X[_V = \bigcup \, ]X[_{V_i}$. 

%%%%%%%%%%%%%%%%%
\end{definition}
%%%%%%%%%%%%%%%%%%%%%%%%%%%%%%%%%%%

%%%%%%%%%%%%%%%%%%%%%%%%%%%%%%%%%%%%%%%%%%%%%%%%%%%
\begin{remark}
%%%%%%%%%%%%%%%%%

 From now on any morphism $(X',V') \to (X,V)$ of overconvergent varieties will  denote a morphism in $\AN^{\dagger}(\calV)$. One can give a down to earth description of morphisms in $\AN^{\dagger}(\calV)$ \cite{leStum:site}*{1.3.9}: to give a morphism $(X', V') \to (X,V)$, it suffices to give a neighborhood $W'$ of $]X'[_{V'}$ in $V'$ and a pair $f\colon X' \to X, u\colon W' \to V$ of morphisms which are \emph{geometrically pointwise compatible}, i.e., such that $u$ induces a map on tubes and the outer square of the diagram
\[
\xymatrix{
		W' 	\ar[r]^{u} 	&	V  		\\
	]X'[_{W'} 	\ar[r]^{]f[_u}
		\ar@{}[u]|{\bigcup |} \ar[d] & 		]X[_{V} 
		\ar@{}[u]|{\bigcup |} \ar[d]		\\
		X' 	\ar[r]^{f}		&	X 		
	}
\]
commutes (and continues to do so after any base change by any isometric extension $K'$ of $K$). 

% In particular, criteria \cite{leStum:site}*{1.3.12}.
%%%%%%%%%%%%%%%%%
\end{remark}
%%%%%%%%%%%%%%%%%%%%%%%%%%%%%%%%%%%

%%%%%%%%%%%%%%%%%%%%%%%%%%%%%%%%%%%%%%%%%%%%%%%%%%%%
\begin{definition}
%%%%%%%%%%%%%%%%%
 
For any presheaf $T \in \widehat{\AN^{\dagger}(\calV)}$, we define $\AN^{\dagger}(T)$ to be the localized category $\AN^{\dagger}(\calV)_{/T}$ whose objects are morphisms $h_{(X,V)} \to T$ (where $h_{(X,V)}$ is the presheaf associated to $(X,V)$) and morphisms are morphisms $(X',V') \to (X,V)$ which induce a commutative diagram
\[
\xymatrix{
h_{(X',V')}\ar[rr]\ar[rd] && h_{(X,V)}\ar[ld]\\
&T&
}.
\]
We may endow $\AN^{\dagger}(T)$ with the induced topology (see \ref{p:inducedTopology}), i.e., the smallest topology making continuous the projection functor $\AN^{\dagger}(T) \to \AN^{\dagger}(\calV)$ \cite{leStum:site}*{Definition 1.4.7}; concretely, the covering condition is the same as in \ref{D:overconvergentSite}. When $T = h_{(X,V)}$ we denote $\AN^{\dagger}(T)$ by $\AN^{\dagger}(X,V)$. Since the projection $\AN^{\dagger} T \to \AN^{\dagger} \calV$ is a fibered category, the projection is also cocontinuous with respect to the induced topology.  Finally, an algebraic space $X$ over $k$ defines a presheaf $(X',V') \mapsto \Hom(X',X)$, and we denote the resulting site by $\AN^{\dagger}(X)$.

%%%%%%%%%%%%%%%%%
\end{definition}
%%%%%%%%%%%%%%%%%%%%%%%%%%%%%%%%%%%

There will be no confusion in writing $(X,V)$ for an object of $\AN^{\dagger}(T)$.\\

 We use subscripts to denote topoi and continue the above naming conventions  --  i.e., we denote the category of sheaves of sets on $\AN^{\dagger}(T)$ (resp. $\AN^{\dagger}(X,V), \AN^{\dagger}(X)$) by $T_{\AN^{\dagger}}$ (resp. $(X,V)_{\AN^{\dagger}}, X_{\AN^{\dagger}}$). Any morphism $f\colon T' \to T$ of presheaves on $\AN^{\dagger}(\calV)$ induces a morphism $f_{\AN^{\dagger}}\colon T'_{\AN^{\dagger}} \to T_{\AN^{\dagger}}$ of topoi. In the case of the important example of a morphism $(f,u)\colon (X',V') \to (X,V)$ of overconvergent varieties, we denote the induced morphism of topoi by $(u^*_{\AN^{\dagger}}, u_{\AN^{\dagger}*})$.

For an analytic space $V$ we denote by $\Open V$ the category of open subsets of $V$ and by $V_{\an}$ the associated topos of sheaves of sets on $\Open V$.  Recall that for an analytic variety $(X,V)$, the topology on the tube $]X[_V$ is induced by the inclusion $i_X\colon ]X[_V \, \hookrightarrow V$.

%%%%%%%%%%%%%%%%%%%%%%%%%%%%%%%%%%%%%%%%%%%%%%%%%%%%
\begin{definition}[\cite{leStum:site}*{Corollary 2.1.3}]
\label{D:sheafRealization}
%%%%%%%%%%%%%%%%%

Let $(X,V)$ be an overconvergent variety. Then there is a  morphism of sites 
\[
\varphi_{X,V}\colon  \AN^{\dagger}(X,V) \to \Open \,  ]X[_V.
\]
The notation as usual is in the `direction' of the induced morphism of topoi and in particular backward; it is associated to the functor $\Open \,  ]X[_V \to \AN^{\dagger}(X,V)$ given by $U = W\cap\, ]X[_V\, 
 \mapsto \, (X,W)$ (and is independent of the choice of $W$ up to strict neighborhoods). This induces a morphism of topoi 
\[
(\varphi^{-1}_{X,V}, \varphi_{X,V*}) \colon (X,V)_{\AN^{\dagger}} \to (]X[_V)_{\an}.
\]

%%%%%%%%%%%%%%%%%
\end{definition}
%%%%%%%%%%%%%%%%%%%%%%%%%%%%%%%%%%%

%%%%%%%%%%%%%%%%%%%%%%%%%%%%%%%%%%%%%%%%%%%%%%%%%%%%
\begin{definition}[\cite{leStum:site}*{2.1.7}]
%%%%%%%%%%%%%%%%%
 Let $(X,V) \in \AN^{\dagger}(T)$ be an overconvergent variety over $T$ and let $F \in T_{\AN^{\dagger}}$ be a sheaf on $\AN^{\dagger}(T)$. We define the \defi{realization} $F_{X,V}$ of $F$ on $]X[_V$ to be $\varphi_{(X,V)*}(F|_{(X,V)_{\AN^{\dagger}}})$, where $F|_{(X,V)_{\AN^{\dagger}}}$ is the restriction of $F$ to $\AN^{\dagger}(X,V)$.

%%%%%%%%%%%%%%%%%
\end{definition}
%%%%%%%%%%%%%%%%%%%%%%%%%%%%%%%%%%%

We can describe the category $T_{\AN^{\dagger}}$ in terms of realizations in a manner similar to sheaves on the crystalline or lisse-\'etale sites.

%%%%%%%%%%%%%%%%%%%%%%%%%%%%%%%%%%%%%%%%%%%%%%%%%%%
\begin{proposition}[\cite{leStum:site}, Proposition 2.1.8] 
\label{P:sheafReal}
%%%%%%%%%%%%%%%%%

Let  $T$ be a presheaf on $\AN^{\dagger}(\calV)$. Then the category $T_{\AN^{\dagger}}$ is equivalent to the following category:
\begin{enumerate}

\item An object is a collection of sheaves $F_{X,V}$ on $]X[_V$ indexed by $(X, V) \in \AN^\dagger(T)$ and, for each $(f, u) \colon (X', V') \to (X, V)$, a morphism $\phi_{f,u} : ]f[_u^{-1}  F_{X,V} \to  F_{X', V'}$, such that as $(f,u)$ varies, the maps $\phi_{f,u}$ satisfy the usual compatibility condition.

\item A morphism is a collection of morphisms $F_{X,V} \to G_{X,V}$ compatible with the morphisms $\phi_{f,u}$.

\end{enumerate}

%%%%%%%%%%%%%%%%%
\end{proposition}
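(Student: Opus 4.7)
The plan is to construct mutually quasi-inverse functors between $T_{\AN^{\dagger}}$ and the category of compatible collections of realizations. In one direction, send $F \in T_{\AN^{\dagger}}$ to the system $F_{X,V} := \varphi_{(X,V)*}(F|_{(X,V)_{\AN^{\dagger}}})$ indexed by $(X,V) \in \AN^{\dagger}(T)$. For each morphism $(f,u)\colon (X',V') \to (X,V)$ of $\AN^{\dagger}(T)$, the commutation of $\varphi_{(-)*}$ with $u_{\AN^{\dagger}}^{-1}$ (a formal consequence of Definition \ref{D:sheafRealization}, since the site functor $\Open\,]X'[_{V'} \to \AN^{\dagger}(X',V')$ factors through $\Open\,]X[_V \to \AN^{\dagger}(X,V)$ via $]f[_u$ up to strict neighborhoods) yields the transition morphism $\phi_{f,u}\colon\, ]f[_u^{-1} F_{X,V} \to F_{X',V'}$, and functoriality of pullback gives the cocycle condition.

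In the reverse direction, given a compatible collection $(F_{X,V}, \phi_{f,u})$, define a presheaf $F$ on $\AN^{\dagger}(T)$ by $F(X,V) := \Gamma(]X[_V, F_{X,V})$, with restriction along $(f,u)$ given by the composite $\Gamma(]X[_V, F_{X,V}) \to \Gamma(]X'[_{V'}, ]f[_u^{-1} F_{X,V}) \xrightarrow{\Gamma(\phi_{f,u})} \Gamma(]X'[_{V'}, F_{X',V'})$. The presheaf axioms follow from the cocycle relations satisfied by the $\phi_{f,u}$. The crucial step is verifying the sheaf condition for $F$ in the induced topology: by Definition \ref{D:overconvergentSite}, it suffices to check descent against families $\{(X,V_i) \to (X,V)\}$ with $\bigcup V_i$ an open covering of a neighborhood of $]X[_V$ and $]X[_V = \bigcup\,]X[_{V_i}$. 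For each inclusion one shows that $\phi_{\id, V_i \hookrightarrow V}$ identifies $F_{X,V_i}$ with the restriction $F_{X,V}|_{]X[_{V_i}}$ (and similarly on overlaps); this identification is forced by the cocycle condition applied to strict neighborhoods (which become identities after localization) together with the fact that the topology on $]X[_V$ is induced from $V$. Once this identification is in hand, the sheaf condition on $\AN^{\dagger}(T)$ reduces exactly to the sheaf condition for $F_{X,V}$ on the open cover $\{]X[_{V_i}\}$ of $]X[_V$.

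The functors are mutually quasi-inverse: starting from $F$, the realization evaluated on $]X[_V$ equals $F(X,V)$ by definition of $\varphi_{(X,V)*}$, and the transition morphisms coincide by construction; starting from a compatible collection, every open $U \subset ]X[_V$ has the form $W \cap ]X[_V$ for some neighborhood $W$ of $]X[_V$ in a suitable open of $V$, so the realization of the assembled sheaf on $U$ recovers $\Gamma(U, F_{X,V})$ via the transition maps attached to strict neighborhoods and open inclusions, yielding a natural isomorphism with the original $F_{X,V}$. I expect the main obstacle to be the descent verification in the middle paragraph: it requires a careful analysis of how localization by strict neighborhoods, open inclusions $V_i \subset V$, and the induced morphisms on tubes interact, and in particular one must use that the covering condition demands $]X[_V = \bigcup\,]X[_{V_i}$ (rather than merely a cover of a neighborhood) to transport the sheaf condition from $(]X[_V)_{\an}$ to $\AN^{\dagger}(T)$.
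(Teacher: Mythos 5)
The paper offers no argument of its own here (it simply cites le Stum, Proposition 2.1.8), so I am judging your argument directly; your overall architecture --- realize via $\varphi_{X,V*}$, assemble a candidate sheaf by $(X,V)\mapsto \Gamma(]X[_V,F_{X,V})$, and reduce the sheaf axiom to the generating coverings $\{(X,V_i)\to (X,V)\}$ --- is indeed the standard one. The genuine gap is at exactly the step you flag as crucial: you assert that the identification $F_{X,V_i}\cong F_{X,V}|_{]X[_{V_i}}$ is ``forced by the cocycle condition applied to strict neighborhoods.'' It is not. A strict neighborhood $(X,W)\to(X,V)$ has $W$ containing a neighborhood of the \emph{whole} tube and induces an isomorphism $]X[_W\cong\, ]X[_V$; such morphisms are invertible in $\AN^{\dagger}(\calV)$, so for them the cocycle and identity relations do force $\phi$ to be invertible. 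But the covering morphisms $(X,V_i)\to(X,V)$ generally have $]X[_{V_i}$ a \emph{proper} open subset of $]X[_V$; they are not invertible in $\AN^{\dagger}$, and no composition identity makes $\phi_{\id,\,V_i\subset V}$ an isomorphism. Compatible collections violating this exist: take $F_a$ to be the constant sheaf $\Z$ on every tube, fix a morphism $(X,V')\to(X,V)$ with $\emptyset\neq\, ]X[_{V'}\subsetneq\, ]X[_V$, and let $\phi$ be the canonical isomorphism on every morphism except that you insert multiplication by $2$ on those morphisms whose source lies in the sieve generated by $(X,V')\to(X,V)$ and whose target does not; stability of the sieve under precomposition makes this cocycle-compatible, yet $\phi$ along $(X,V')\to(X,V)$ is not invertible. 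No such collection is a realization: for any presheaf $F$ on $\AN^{\dagger}(T)$ and any open $W'\subseteq V'$ one has $F_{X,V'}(W'\cap\,]X[_{V'})=F(X,W')=F_{X,V}(W'\cap\,]X[_V)$ with identity comparison map, so realizations automatically have invertible transition maps along open immersions with $X$ fixed. Hence, with ``usual compatibility'' read as only the cocycle/identity relations, your two functors cannot be quasi-inverse, and the descent verification in your middle paragraph does not go through.

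The repair is that the invertibility of $\phi_{f,u}$ for morphisms with $X'=X$ and $u$ an open immersion (equivalently, for the covering-type morphisms) must be taken as part of the hypotheses on the collection --- this is how le Stum's Proposition 2.1.8 is to be read, and it is the exact analogue of the condition, in the classical description of sheaves on the crystalline site, that the comparison maps be isomorphisms for open immersions. Once that condition is assumed rather than ``derived,'' the rest of your outline is sound: the restriction maps of the assembled presheaf are well defined because strict neighborhoods are already inverted in $\AN^{\dagger}(T)$, the sheaf axiom on $\AN^{\dagger}(T)$ reduces to the sheaf axiom for $F_{X,V}$ on the open cover $\{]X[_{V_i}\}$ of $]X[_V$ (using $]X[_V=\bigcup\,]X[_{V_i}$, as you note), and the two constructions are quasi-inverse because every open of $]X[_V$ is the trace of an open of $V$.
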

%%%%%%%%%%%%%%%%%%%%%%%%%%%%%%%%%%

To obtain a richer theory we endow our topoi with sheaves of rings and study the resulting theory of modules.

%%%%%%%%%%%%%%%%%%%%%%%%%%%%%%%%%%%%%%%%%%%%%%%%%%%%%%%%%%%%
\begin{definition}[\cite{leStum:site}, Definition 2.3.4]
%%%%%%%%%%%%%%%%%

Define the \defi{sheaf of overconvergent functions} on $\AN^{\dagger}(\calV)$ to be the presheaf of rings 
    \[
    \mathcal O_{\mathcal V}^\dagger \colon (X, V) \mapsto \Gamma(]X[_V, i_X^{-1}\mathcal
    O_V)
    \]
    where $i_X$ is the inclusion of $]X[_V$ into $V$; this is a sheaf by \cite{leStum:site}*{Corollary 2.3.3}. For  $T \in \widehat{\AN^{\dagger}(\calV)}$ a presheaf on $\AN^{\dagger}(\calV)$, define $\calO^{\dagger}_{T}$ to be the restriction of $\calO^{\dagger}_{\calV}$ to $\AN^{\dagger}(T)$.

%%%%%%%%%%%%%%%%%  
\end{definition}  
%%%%%%%%%%%%%%%%%%%%%%%%%%%%%%%%%%

  We follow our naming conventions above, for instance denoting by $\calO^{\dagger}_{(X,V)}$ the restriction of $\calO^{\dagger}_{\calV}$ to $\AN(X,V)$.

%%%%%%%%%%%%%%%%%%%%%%%%%%%%%%%%%%%%%%%%%%%%%%%%%%%
\begin{remark}
\label{R:ringedRealization}
%%%%%%%%%%%%%%%%%

By \cite{leStum:site}*{Proposition  2.3.5, (i)}, the morphism of topoi of Definition \ref{D:sheafRealization} can be promoted to a morphism of ringed sites 
\[
    (\varphi_{X,V}^*, \varphi_{X,V*}) \colon (\textrm{AN}^\dagger(X,V), \mathcal O_{(X,V)}^\dagger) \to (]X[_V, i_X^{-1}\mathcal O_V).
\]
In particular, for $(X,V) \in \AN^{\dagger} T$ and $M \in \calO^{\dagger}_T$, the realization $M_{X,V}$ is an $i^{-1}_{X}\calO_V$-module. For any morphism $(f,u)\colon (X',V') \to (X,V)$ in $\AN^{\dagger}(T)$, one has a map
\[
 (]f[_{u}^\dagger, ]f[_{u*}) \colon (]X'[_{V'}, i_{X',V'}^{-1} \mathcal O_{V'}) \to (]X[_V, i_{X,V}^{-1} \mathcal O_V)
\]
of ringed sites, and functoriality gives transition maps
\[
    \phi^{\dagger}_{f,u}\colon ]f[_{u}^{\dagger}M_{X,V} \to M_{X',V'}
\]
which satisfy the usual cocycle compatibilities.

%%%%%%%%%%%%%%%%%
\end{remark}
%%%%%%%%%%%%%%%%%%%%%%%%%%%%%%%%%% 

 We can promote the description of $T_{\AN^{\dagger}}$ in Proposition \ref{P:sheafReal} to descriptions of the categories $\Mod \calO^{\dagger}_{T}$ of $\calO^{\dagger}_{T}$-modules, $\QCoh \calO^{\dagger}_{T}$ of quasi-coherent $\calO^{\dagger}_{T}$-modules (i.e., modules which locally have a presentation), and $\Mod_{\fp} \calO^{\dagger}_{T}$ of locally finitely presented $\calO^{\dagger}_{T}$-modules.

%%%%%%%%%%%%%%%%%%%%%%%%%%%%%%%%%%%%%%%%%%%%%%%%%%%
\begin{proposition}[\cite{leStum:site}, Proposition 2.3.6] 
\label{P:moduleReal}
%%%%%%%%%%%%%%%%%

Let  $T$ be a presheaf on $\AN^{\dagger}(\calV)$. Then the category $\Mod \calO^{\dagger}_T$ (resp. $\QCoh \calO^{\dagger}_T$,  $\Mod_{\fp} \calO^{\dagger}_T$) is equivalent to the following category:
\begin{enumerate}

\item An object is a collection of sheaves $M_{X,V} \in \Mod i^{-1}_X \calO_V$ (resp. $\QCoh i^{-1}_X\calO_V$,  $\Coh i^{-1}_X\calO_V$) on $]X[_V$ indexed by $(X, V) \in \AN^\dagger(T)$ and, for each $(f, u) \colon (X', V') \to (X, V)$, a morphism (resp. isomorphism) $\phi^{\dagger}_{f,u}\colon ]f[_u^{\dagger}M_{X,V} \to M_{X', V'}$, such that as $(f,u)$ varies, the maps $\phi^{\dagger}_{f,u}$ satisfy the usual compatibility condition.

\item A morphism is a collection of morphisms $M_{X,V} \to M'_{X,V}$ compatible with the morphisms $\phi_{f,u}^{\dagger}$.

\end{enumerate}

%%%%%%%%%%%%%%%%%
\end{proposition}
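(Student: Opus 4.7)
My plan is to bootstrap from Proposition \ref{P:sheafReal}, which provides the analogous equivalence for sheaves of sets, by grafting onto it the ringed-site structure recorded in Remark \ref{R:ringedRealization}. Concretely, I will check that the realization functor underlying Proposition \ref{P:sheafReal} lifts to a functor on $\calO_T^{\dagger}$-modules landing in the stated descent category, construct a quasi-inverse by the analogous gluing procedure, and then handle the $\QCoh$ and $\Mod_{\fp}$ variants by comparing local presentations on the two sides.

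For the module enhancement of realization, given $M \in \Mod \calO_T^{\dagger}$, the restriction $M|_{(X,V)_{\AN^{\dagger}}}$ is naturally an $\calO_{(X,V)}^{\dagger}$-module, and pushforward along the ringed-site morphism $\varphi_{X,V}$ of Remark \ref{R:ringedRealization} gives the required $i_X^{-1}\calO_V$-module $M_{X,V}$. For a morphism $(f,u)\colon (X',V') \to (X,V)$ in $\AN^{\dagger}(T)$, the transition map is obtained by applying the pullback $\,]f[_u^{\dagger}$ to the natural morphism $M_{X,V} \to \,]f[_{u*}M_{X',V'}$ supplied by functoriality and invoking the adjunction $\,]f[_u^{\dagger} \dashv \,]f[_{u*}$; cocycle compatibility is inherited from the underlying sheaf-of-sets comparison. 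In the reverse direction, given a compatible family $\{M_{X,V},\phi_{f,u}^{\dagger}\}$, I would glue it exactly as for Proposition \ref{P:sheafReal}, and then upgrade the resulting sheaf of sets to an $\calO_T^{\dagger}$-module termwise, using that each $M_{X,V}$ carries an $i_X^{-1}\calO_V$-action and that the transition maps are by assumption $\calO$-linear.

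For the $\QCoh$ variant, the crucial point is that $M$ admits a local presentation $(\calO_T^{\dagger})^{(I)} \to (\calO_T^{\dagger})^{(J)} \to M \to 0$ on the overconvergent site if and only if each realization $M_{X,V}$ has an $i_X^{-1}\calO_V$-presentation \emph{and} every transition $\phi_{f,u}^{\dagger}$ is an isomorphism: one direction follows from right exactness of $\,]f[_u^{\dagger}$ combined with the identity $\,]f[_u^{\dagger}\calO^{\dagger} = \calO^{\dagger}$, while the converse requires globalizing such presentations on a cover of the overconvergent site. The $\Mod_{\fp}$ case then corresponds to taking $I,J$ finite and invoking the standard fact that on an analytic space, locally finitely presented $i_X^{-1}\calO_V$-modules are exactly the coherent ones, yielding the $\Coh$ description in the statement. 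I expect the hardest step to be the converse half of this quasi-coherence criterion --- producing a single global presentation on a cover in $\AN^{\dagger}(T)$ from pointwise presentations on each tube $]X[_V$ --- which requires exploiting the fact, implicit in Definition \ref{D:sheafRealization}, that the overconvergent topology restricted to $\AN^{\dagger}(X,V)$ is essentially the analytic topology on $]X[_V$ via $\varphi_{X,V}$, so that analytic covers of $]X[_V$ on which $M_{X,V}$ is presented pull back to covers in the overconvergent site along which the glued $\calO_T^{\dagger}$-module acquires the desired presentation.
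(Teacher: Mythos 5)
Your plan is correct and is essentially the only argument available -- indeed the paper offers no proof of its own but cites \cite{leStum:site}*{Proposition 2.3.6}, whose proof is exactly your bootstrap: the set-level equivalence of Proposition \ref{P:sheafReal} upgraded along the ringed-site morphisms of Remark \ref{R:ringedRealization}, with the $\QCoh$/$\Mod_{\fp}$ refinements coming from right-exactness of $]f[_u^{\dagger}$, exactness of the realization functors, and the fact that $\calO^{\dagger}_T$ is a crystal. One repair for your ``hardest step'': $\AN^{\dagger}(X,V)$ is \emph{not} essentially the analytic topology on $]X[_V$ (the morphism $\varphi_{X,V}$ is far from an equivalence of topoi), so what you should invoke instead is Remark \ref{R:crystalEquivalence} -- because your transition maps are isomorphisms the glued module is a crystal, hence on each member $(X,V_i)$ of the covering induced by an analytic cover of the tube it is isomorphic to $\varphi_{X,V_i}^{*}$ of its realization, and right-exactness of $\varphi_{X,V_i}^{*}$ (together with $\varphi_{X,V_i}^{*}i_X^{-1}\calO_{V_i}=\calO^{\dagger}_{(X,V_i)}$) then transports the local presentations to the overconvergent site.
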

%%%%%%%%%%%%%%%%%%%%%%%%%%%%%%%%%%

%%%%%%%%%%%%%%%%%%%%%%%%%%%%%%%%%%%%%%%%%%%%%%%%%%%% 
\begin{definition}[\cite{leStum:site}, Definition 2.3.7]
%%%%%%%%%%%%%%%%%

    Define the \defi{category of overconvergent crystals on $T$}, denoted $\Cris^{\dagger} T$, to be the full subcategory of $\Mod \calO^{\dagger}_T$ such that the transition maps $\phi_{f,u}^{\dagger}$ are isomorphisms.

%%%%%%%%%%%%%%%%%  
\end{definition}  
%%%%%%%%%%%%%%%%%%%%%%%%%%%%%%%%%%

%%%%%%%%%%%%%%%%%%%%%%%%%%%%%%%%%%%%%%%%%%%%%%%%%%%% 
\begin{example}
%%%%%%%%%%%%%%%%%

The sheaf $\calO^{\dagger}_{T}$ is a crystal, and in fact $\QCoh \calO^{\dagger}_T \subset \Cris^{\dagger} T$.

%%%%%%%%%%%%%%%%%  
\end{example}
%%%%%%%%%%%%%%%%%%%%%%%%%%%%%%%%%% 

%%%%%%%%%%%%%%%%%%%%%%%%%%%%%%%%%%%%%%%%%%%%%%%%%%%
\begin{remark}
\label{R:crystalEquivalence}
%%%%%%%%%%%%%%%%%

It follows immediately from the definition of the pair $(\varphi_{X,V}^*, \varphi_{X,V*})$ of functors that $\varphi_{X,V*}$ of a  $\calO^{\dagger}_{(X,V)}$-module is a crystal, and that the adjunction $\varphi_{X,V}^* \varphi_{X,V*}E \to E$ is an isomorphism if $E$ is a crystal. If follows that the pair $\varphi_{X,V}^*$ and $\varphi_{X,V*}$ induce an equivalence of categories 
\[
\Cris^{\dagger} (X,V) \to \Mod i_X^{-1}\mathcal O_V;
\]
see \cite{leStum:site}*{Proposition 2.3.8} for more detail.

%%%%%%%%%%%%%%%%%
\end{remark}
%%%%%%%%%%%%%%%%%%%%%%%%%%%%%%%%%% 

%%%%%%%%%%%%%%%%%%%%%%%%%%%%%%%%%%%%%%%%%%%%%%%%%%%
\begin{remark}
%%%%%%%%%%%%%%%%%

An advantage of the use of sites and topoi is that the relative theory is simple. For instance, for a morphism $T' \to T$ of presheaves on $\AN^{\dagger}(\calV)$ the associated morphism of sites $\AN^{\dagger}(T') \to  \AN^{\dagger}(T)$ is isomorphic to the projection morphism associated to the localization $\AN^{\dagger}(T)_{/T'} \to \AN^{\dagger}(T)$ (and in particular one gets for free an exact left adjoint $u_!$ to the pullback functor $u^*\colon \Ab  (T_{\AN^{\dagger}}) \to \Ab (T'_{\AN^{\dagger}})$; see \ref{p:abelianExtensionZero}). 

%%%%%%%%%%%%%%%%%
\end{remark}
%%%%%%%%%%%%%%%%%%%%%%%%%%%%%%%%%% 

One minor subtlety is the choice of an overconvergent variety as a base.

%%%%%%%%%%%%%%%%%%%%%%%%%%%%%%%%%%%%%%%%%%%%%%%%%%%%
\begin{definition}
\label{D:overconvergentBase}
%%%%%%%%%%%%%%%%%

Let $(C,O) \in \AN^{\dagger}(\calV)$ be an overconvergent variety and let $T \to C$ be a morphism from a presheaf on $\Sch_k$ to $C$. Then $T$ defines a presheaf on $\AN^{\dagger}(C,O)$ which sends $(X,V) \to (C,O)$ to $\Hom_C(X,T)$, which we denote by $T/O$. We denote the associated site by $\AN^{\dagger}(T/O)$, and when $(C,O) = (S_k,S)$ for some formal $\calV$-scheme $S$ we write instead $\AN^{\dagger}(T/S)$.

%%%%%%%%%%%%%%%%%
\end{definition}
%%%%%%%%%%%%%%%%%%%%%%%%%%%%%%%%%%%

The minor subtlety is that there is no morphism $T \to h_{(C,O)}$ of presheaves on $\AN^{\dagger}(\calV)$. A key construction is the following.

%%%%%%%%%%%%%%%%%%%%%%%%%%%%%%%%%%%%%%%%%%%%%%%%%%%%
\begin{definition}[\cite{leStum:site}*{Paragraph after Corollary 1.4.15}]
\label{D:imagePresheaf}
%%%%%%%%%%%%%%%%%

Let $(X,V) \to (C,O) \in \AN^{\dagger}(\calV)$ be a morphism of overconvergent varieties. We denote by $X_V/O$ the image presheaf of the morphism $(X,V) \to X/O$, considered as a morphism of presheaves. Explicitly, a morphism $(X',V') \to X/O$ lifts to a morphism $(X',V') \to X_V/O$ if and only if there exists a morphism $(X',V') \to (X,V)$ over $X/O$, and in particular different lifts $(X',V') \to (X,V)$ give rise to the same morphism $(X',V') \to X_V/O$. When $(C,O) = (\Spec k, \MM(K))$, we may write $X_V$ instead $X_V/\MM(K)$.

%%%%%%%%%%%%%%%%%
\end{definition}
%%%%%%%%%%%%%%%%%%%%%%%%%%%%%%%%%%%

Many theorems will require the following extra assumption of \cite{leStum:site}*{Definition 1.5.10}. Recall that a morphism of formal schemes $P' \to P$ is said to be proper at a subscheme $X \subset P'_k$ if, for every component $Y$ of $\overline{X}$, the map $Y \to P_k$ is proper (see \cite{leStum:site}*{Definition 1.1.5}).

%%%%%%%%%%%%%%%%%%%%%%%%%%%%%%%%%%%%%%%%%%%%%%%%%%%%
\begin{definition}
\label{D:realization}
%%%%%%%%%%%%%%%%%

Let $(C,O) \in \AN^{\dagger}(\calV)$ be an overconvergent variety and let $f\colon X \to C$ be a morphism of $k$-schemes. We say that a formal morphism $(f,u)\colon (X,V) \to (C,O)$, written as 
\[
\xymatrix{
X \ar@{^(->}[r] \ar[d]^f & P  \ar[d]^v &   V\ar[d]^u \ar[l]\\
C \ar@{^(->}[r] & Q  & O\ar[l]
},
\] 
is a \defi{geometric realization} of $f$ if $v$ is proper at $X$, $v$ is smooth in a neighborhood of $X$, and $V$ is a neighborhood of $]X[_{P_K \times_{Q_K} O}$ in $P_K \times_{Q_K} O$. We say that $f$ is \defi{realizable} if there exists a geometric realization of $f$.

%%%%%%%%%%%%%%%%%
\end{definition}
%%%%%%%%%%%%%%%%%%%%%%%%%%%%%%%%%%%

%%%%%%%%%%%%%%%%%%%%%%%%%%%%%%%%%%%%%%%%%%%%%%%%%%%%
\begin{example}
%%%%%%%%%%%%%%%%%

Let $Q$ be a formal scheme and let $C$ be a closed subscheme of $Q$. Then any projective morphism $X \to C$ is realizable.

%%%%%%%%%%%%%%%%%
\end{example}
%%%%%%%%%%%%%%%%%%%%%%%%%%%%%%%%%% 

We need a final refinement to $\AN^{\dagger}(\calV)$.

%%%%%%%%%%%%%%%%%%%%%%%%%%%%%%%%%%%%%%%%%%%%%%%%%%%%
\begin{definition}
\label{D:good}
%%%%%%%%%%%%%%%%%

We say that an overconvergent variety $(X,V)$ is \defi{good} if there is a good neighborhood $V'$ of $]X[_V$ in $V$ (i.e., every point of $]X[_V$ has an affinoid neighborhood in $V$). We say that a formal scheme $S$ is good if the overconvergent variety $(S_k,S_K)$ is good. We define the \defi{good overconvergent site} $\AN^{\dagger}_{\g}(T)$ to be the full subcategory of $\AN^{\dagger}(T)$ consisting of good overconvergent varieties. Given a presheaf $T \in \AN^{\dagger}(\calV)$, we denote by $T_{\g}$ the restriction of $T$ to $\AN^{\dagger}_{\g}(\calV)$.

%%%%%%%%%%%%%%%%%
\end{definition}
%%%%%%%%%%%%%%%%%%%%%%%%%%%%%%%%%%%

Note that localization commutes with passage to good variants of our sites (e.g., there is an isomorphism $\AN^{\dagger}_{\g}(\calV)_{/T_{\g}} \cong \AN^{\dagger}_{\g}(T)$). When making further definitions we will often omit the generalization to $\AN^{\dagger}_{\g}$ when it is clear. 
\\

The following proposition will allow us to deduce facts about $\Mod_{\fp} \calO^{\dagger}_{X_g}$ from results about $(X,V)$ and $X_V$.

%%%%%%%%%%%%%%%%%%%%%%%%%%%%%%%%%%%%%%%%%%%%%%%%%%%%
\begin{proposition}
\label{P:coverings}
%%%%%%%%%%%%%%%%%

Let $(C,O) \in \AN^{\dagger}_{\g}(\calV)$ be a good overconvergent variety and let $(X,V) \to (C,O)$ be a geometric realization of a morphism $X \to C$ of schemes. Then the following are true:
\begin{itemize}
\item [(i)] The map $(X,V)_{\g} \to (X/O)_{\g}$ is a covering in $\AN^{\dagger}_{\g}(\calV)$.
\item [(ii)] There is an equivalence of topoi $(X_V/O)_{\AN^{\dagger}_g} \cong (X/O)_{\AN^{\dagger}_g}$. 
\item [(iii)] The natural pullback map $\Cris_{\g}^{\dagger} X/O \to \Cris_{\g}^{\dagger} X_{V}/O$ is an equivalence of categories.
\item [(iv)] Suppose that $(X,V)$ is good. Then the natural map $\Cris^{\dagger} X_V/O \to \Cris_{\g}^{\dagger} X_V/O$ is an equivalence of categories.
\end{itemize}

%%%%%%%%%%%%%%%%%
\end{proposition}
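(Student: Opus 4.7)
The plan is to handle the four parts in order, using (i) as the geometric engine and deducing (ii)–(iii) from it by formal topos-theoretic arguments, while (iv) requires a separate density argument.

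For part (i), I would unpack what it means for $(X,V)_g \to (X/O)_g$ to be a covering in the induced topology: given any good overconvergent variety $(X',V') \to (X/O)_g$, I must produce a covering $\{(X',V'_i) \to (X',V')\}$ in $\AN^{\dagger}_g(\calV)$ such that each composite $(X',V'_i) \to X/O$ lifts to a morphism $(X',V'_i) \to (X,V)$ over $X/O$. Unwinding Definition \ref{D:overconvergentBase}, the datum $(X',V') \to X/O$ consists of a morphism $X' \to X$ of $k$-schemes over $C$ together with a formal morphism $(X', V') \to (C, O)$. To build the lift, I want to use that $v\colon P \to Q$ is smooth in a neighborhood of $X$ and proper at $X$ (from Definition \ref{D:realization}). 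Smoothness lets me locally lift the composite $P' \to Q$ through $v$ on an affine cover of a neighborhood of $X'$ in $P'$; properness at $X$ ensures that the induced tube map $]X'[_{V'_i} \to\, ]X[_V$ is well-defined and that the lifted morphism is geometrically pointwise compatible in the sense of \cite{leStum:site}*{1.3.9}. The existence of such lifts locally in the $G$-topology on $V'$ is exactly the content of le Stum's \cite{leStum:site}*{1.5}, and goodness of the $V'_i$ is arranged by refining to an affinoid cover. This is the main geometric step and the principal obstacle of the proposition.

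For part (ii), part (i) shows that the canonical map of presheaves $X_V/O \to X/O$ becomes an epimorphism after restriction to $\AN^{\dagger}_g(\calV)$ and sheafification: by Definition \ref{D:imagePresheaf}, $X_V/O \hookrightarrow X/O$ is a monomorphism of presheaves, and (i) says that every good overconvergent variety over $X/O$ is covered by ones factoring through $(X,V)$, hence through $X_V/O$. Thus the inclusion induces an isomorphism on associated sheaves, and the two topoi are equivalent by the standard fact that localization of a topos at a subobject that sheafifies to the terminal object is trivial. Part (iii) is then immediate: the equivalence of (ii) is an equivalence of ringed topoi (since $\calO^{\dagger}$ is defined pointwise on realizations and is respected by the equivalence), so it induces an equivalence $\Mod \calO^{\dagger}_{X/O, g} \isomto \Mod \calO^{\dagger}_{X_V/O, g}$ which restricts to the full subcategory of crystals characterized by isomorphism transition maps (Proposition \ref{P:moduleReal}).

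Part (iv) is separate and requires that goodness of $(X,V)$ propagates: since every object $(X',V') \in \AN^{\dagger}(X_V/O)$ admits, at least locally in the sense of $X_V/O$, a morphism to the good variety $(X,V)$, pulling back an affinoid cover of a good neighborhood of $]X[_V$ in $V$ produces a covering $\{(X', V'_i)\}$ of $(X',V')$ by good overconvergent varieties. Thus $\AN^{\dagger}_g(X_V/O)$ is a topologically generating subsite of $\AN^{\dagger}(X_V/O)$. Because a crystal is determined up to isomorphism by its realization on any covering together with the cocycle data (Remark \ref{R:crystalEquivalence} and Proposition \ref{P:moduleReal}), restriction to the good site is fully faithful; essential surjectivity follows by descent along the covering $\{(X',V'_i) \to (X',V')\}$, using that the transition isomorphisms on overlaps can be assembled because those overlaps, being fiber products of good varieties over a good base, can in turn be covered by good varieties.
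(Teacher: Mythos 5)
Your treatment of (i)--(iii) follows the paper's route: the paper disposes of (i) and (ii) by citing le Stum (1.5.14, 1.5.15) and deduces (iii) from (ii), and your sketch is essentially an unwinding of those citations -- (i) is indeed the geometric heart (smoothness/properness of the realization plus le Stum's fibration results give the local lifts), and your passage from (i) to (ii) via ``mono of presheaves that sheafifies to an epi, hence to an iso'' is correct. No complaints there.

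Part (iv) is where there is a genuine gap. Your argument rests on the claim that pulling back an affinoid cover of a good neighborhood of $]X[_V$ along $u\colon V' \to V$ produces a covering of $(X',V')$ by \emph{good} overconvergent varieties, i.e.\ that $\AN^{\dagger}_{\g}(X_V/O)$ is topologically generating in $\AN^{\dagger}(X_V/O)$. Goodness does not pull back this way: coverings in $\AN^{\dagger}$ are by open subsets $V'_i \subset V'$, the preimage of an affinoid domain is merely an analytic domain, and a point of $]X'[_{V'}$ lying over a point with an affinoid neighborhood in $V$ need not have an affinoid neighborhood in $V'$. So the density claim is unjustified (and false for general $V'$), and both your full faithfulness and your essential surjectivity arguments lean on it. The intended argument (the reason the paper can say ``the last is clear'') is different and does not need density: by the very definition of the image presheaf $X_V/O$, \emph{every} object of $\AN^{\dagger}(X_V/O)$ admits a morphism to $(X,V)$, so $(X,V) \to X_V/O$ is an epimorphism of presheaves on the full site and -- since $(X,V)$ is good -- also on the good site. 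Descent along this single covering (cf.\ the equivalence \eqref{eq:crisToStrat}) identifies both $\Cris^{\dagger} X_V/O$ and $\Cris^{\dagger}_{\g} X_V/O$ with $\Strat^{\dagger}(X,V/O)$, compatibly with the restriction functor, which is therefore an equivalence. Replace your density step with this descent-along-$(X,V)$ argument and part (iv) goes through.
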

%%%%%%%%%%%%%%%%%%%%%%%%%%%%%%%%%%%

%%%%%%%%%%%%%%%%%%%%%%%%%%%%%%%%%%%%%%%%%%%%%%%%%%%%
\begin{proof}
%%%%%%%%%%%%%%%%%

The first two claims are \cite{leStum:site}*{1.5.14,  1.5.15}, the third follows from the second, and the last is clear.

%%%%%%%%%%%%%%%%%
\end{proof}
%%%%%%%%%%%%%%%%%%%%%%%%%%%%%%%%%%%

In particular, the natural map $\Mod_{\fp} \calO^{\dagger}_{X_{\g}} \to \Mod_{\fp} \calO^{\dagger}_{(X_V)_{\g}} \cong \Mod_{\fp} \calO^{\dagger}_{X_V}$ is an equivalence of categories.

%****************************************************************************
%****************************************************************************
\section{Calculus on the overconvergent site and comparison with the classical theory}
\label{S:overconvergentCalculus}
%****************************************************************************
%****************************************************************************

Here we compare constructions on the overconvergent site and on the ringed spaces $(]X[_V, i_X^{-1}\calO_V)$ to the classical constructions of rigid cohomlogy (exposited for example in \cite{leStum:rigidBook}), introducing along the way the variants of `infinitesimal calculus' useful in the following. 
\\

Let $V$ be an analytic variety. Recall (see \cite{leStum:site}*{Section 4.2}) that $V$ has a Grothendieck topology (generated by affinoid subdomains) which is finer than its usual topology; we refer to this as `the $G$-topology' on $V$ and write $V_G$ when we consider $V$ with its $G$-topology. The natural morphism $\pi\colon V_G \to V$ (induced by the morphism $\id\colon V \to V_G$ on underlying sets) is a morphism of ringed sites. When $V$ is good, the functor $F \mapsto F_G := \pi^*F$ is fully faithful and induces an equivalence of categories 
\[
\Coh \calO_V \cong \Coh \calO_{V_G}.
\]
Indeed, for an admissible $W \in \tau_V$, $\pi^*F(W) = \varinjlim_{W \subset W'}F(W')$, where the limit is taken over all open neighborhoods $W'$ of $W$. The unit $\id \to \pi_*\pi^*F$ of adjunction is then visibly an isomorphism, so by lemma \ref{L:unitFull}, we conclude that $\pi^*$ is an isomorphism.

Recall also that the set $V_0$ of rigid points of $V$ has the structure of a rigid analytic variety such that the inclusion $V_0 \hookrightarrow V$ induces an equivalence $(V_0,\calO_{V_0}) \cong (V_G,\calO_{V_G})$ of ringed topoi, in particular inducing equivalences 
\[
\Mod \calO_{V_0} \cong \Mod \calO_{V_G}
\]
and 
\[
\Coh \calO_{V_0} \cong \Coh \calO_{V_G} \cong \Coh \calO_{V}. 
\]
We denote by $\pi_0$ the composition $\widetilde{V_0} \cong \widetilde{V_G} \to \widetilde{V}$, and for a bounded below complex of abelian sheaves $E_0 \in  \mathbb{D}_+(\widetilde{V_0})$ define $E_0^{\an}$ to be $\mathbb{R}\pi_0E$. When $V$ is good and $E_0$ is coherent there is an isomorphism $E_0^{\an} \cong \pi_0E_0$ (this follows from \cite{Berkovich:nonArchEtaleCoh}*{1.3.6 (ii)}). Moreover, suppose that $(X,V)$ is a good overconvergent variety, $]\overline{X}[_V = V$, and $E_0$ is a coherent $j_{X_0}^{\dagger}\calO_{V_0}$-module (see Definition \ref{D:jDagger} below). Then by \cite{leStum:site}*{Proposition 3.4.3 (3)},  $E_0^{\an} \cong \pi_0E$.

Now let $(X,V)$ be a good overconvergent variety. We studied above (Proposition \ref{P:moduleReal}) the ringed site $(]X[_V, i_X^{-1}\calO_V)$. To study the analogue in the classical rigid theory and to compare the two we first make the following definitions.

%%%%%%%%%%%%%%%%%%%%%%%%%%%%%%%%%%%%%%%%%%%%%%%%%%%%
\begin{definition}
\label{D:jDagger}
%%%%%%%%%%%%%%%%%

Let $(X,V)$ be a good overconvergent variety and assume that the inclusion $i_X\colon ]X[_V\, \hookrightarrow V$ is closed (which we can do since $(X,V) \cong (X,]\overline{X}[_V)$ in $\AN^{\dagger}(\calV)$, where $\overline{X}$ is the closure of $X$ in $P$). Let $]X[_{V_0}$ be the underlying rigid space $(]X[_{V})_0$ of $]X[_{V}$; alternatively,  $]X[_{V_0}$ is isomorphic to the rigid analytic tube, i.e., the preimage of $X$ with respect to the composition $V_0 \to (P_K)_0 \to P_k$, where $(P_K)_0$ is the Raynaud generic fiber of $P$.

Denote by $i_{X_0}\colon ]X[_{V_0} \hookrightarrow V_0$ the corresponding inclusion of rigid analytic spaces and let $F \in \widetilde{V}$ (resp. $F_0 \in \widetilde{V_0}$). We define functors $j_X^{\dagger}$ \cite{leStum:site}*{Proposition 2.2.12} and $j_{X_0}^{\dagger}$ \cite{leStum:rigidBook}*{Proposition 5.1.2} by 
\[
j_{X}^{\dagger} F = i_{X*}i_X^{-1} F  
\]
and 
\[
j_{X_0}^{\dagger} F_0 = \varinjlim j'_{0*}j^{'-1}_0 F_ 0
\]
where the limit runs over all strict neighborhoods $V_0'$ of $]X[_{V_0}$ in $V_0$ (recall from \cite{leStum:rigidBook}*{Definition 3.1.1} that a strict neighborhood of $]X[_{V_0}$ in $V_0$ is an admissible open subset $V_0'$ containing $]X[_{V_0}$ such that the covering $\{V_0', V_0 - ]X[_{V_0}\}$ is an admissible covering of $V_0$).

%%%%%%%%%%%%%%%%%
\end{definition}
%%%%%%%%%%%%%%%%%%%%%%%%%%%%%%%%%%%

%%%%%%%%%%%%%%%%%%%%%%%%%%%%%%%%%%%%%%%%%%%%%%%%%%%
\begin{proposition}
\label{P:daggerYoga}
%%%%%%%%%%%%%%%%%

With the notation of Definition \ref{D:jDagger}, the following are true.
\begin{itemize}
  
\item [(i)] There is a natural isomorphism 
\[
j_{X}^{\dagger} F = \varinjlim j'_{*}j^{'-1} F
\]
where the limit runs over all immersions of neighborhoods $V'$ of $]X[_{V}$ in $V$.

\item [(ii)] The functors $i_X^{-1}$ and $i_{X*}$ induce an equivalence of categories 
\[
\QCoh j_X^{\dagger}\calO_V \to \QCoh i_X^{-1} \calO_V
\]
which restricts to give an equivalence on coherent sheaves.

\item [(iii)] The functors 
\[
\Coh \calO_{V'} \to \Coh i_X^{-1} \calO_V \to \Coh j_X^{\dagger} \calO_V,
\]
\[
 E \mapsto i_{X,V'}^{-1}E \mapsto i_{X*}i_{X,V'}^{-1}E,
\]
where $V'$ ranges over neighborhoods of $]X[_V$ in $V$ and $i_{X,V'}$ denotes the inclusion $]X[_V\, \hookrightarrow V'$, induce equivalences of categories 
\[
\varinjlim \Coh \calO_{V'} \cong \Coh i_X^{-1} \calO_V \cong \Coh j_X^{\dagger} \calO_V.
\]

\item [(iii')] The functors 
\[
\Coh \calO_{V'} \to \Coh j_{X_0}^{\dagger} \calO_{V_0}, \, E \mapsto i_{X_0*}i_{X_0,V'}^{-1}E,
\]
where $V'$ ranges over strict neighborhoods of $]X[_{V_0}$ in $V_0$ and $i_{X_0,V'}$ denotes the inclusion $]X[_{V_0} \hookrightarrow V'$, induce an equivalence of categories 
\[
\varinjlim \Coh \calO_{V'} \cong \Coh j_{X_0}^{\dagger} \calO_{V_0}.
\]

\item [(iv)] The map $E \mapsto E^{\an}$ induces an equivalence of categories 
\[
\Coh j_{X_0}^{\dagger} \calO_{V_0} \cong \Coh j_X^{\dagger} \calO_V.
\]

\end{itemize}

%%%%%%%%%%%%%%%%%
\end{proposition}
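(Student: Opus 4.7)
The plan is to prove (i)--(iv) in order: (i) by a stalk computation, (ii) as an instance of the standard closed-immersion equivalence of ringed topoi, (iii) and (iii$'$) by extension-from-a-neighborhood arguments using the goodness of $(X,V)$, and (iv) by combining (iii) and (iii$'$) with the equivalences $\Coh \calO_V \cong \Coh \calO_{V_G} \cong \Coh \calO_{V_0}$ recalled just above the proposition. The main obstacle will be the extension step in (iii): making rigorous that a coherent $i_X^{-1}\calO_V$-module, which a priori only possesses finite presentations over germs of neighborhoods of points of $]X[_V$, is actually the pullback of a genuine coherent sheaf on an honest ambient neighborhood $V'$. This requires a quasi-compactness argument along $]X[_V$ combined with the affinoid neighborhoods supplied by goodness.

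For (i), since $(X,V) \cong (X,\,]\overline X[_V)$ we may assume $i_X$ is a closed immersion, and then the stalks of the natural map $\varinjlim j'_* j'^{-1} F \to i_{X*} i_X^{-1} F$ agree pointwise: at $x\in\, ]X[_V$ every neighborhood $V'$ of $]X[_V$ contains $x$, so both sides give $F_x$; at $x \notin\, ]X[_V$ closedness produces a neighborhood $V'$ of $]X[_V$ avoiding $x$, forcing both stalks to vanish. Thus the map is an isomorphism. For (ii), I invoke the general closed-immersion fact: for a closed immersion $i\colon Z \hookrightarrow Y$ of ringed topoi the adjunction $(i^{-1}, i_*)$ restricts to an equivalence between $i_*\calO_Z$-modules on $Y$ and $\calO_Z$-modules on $Z$. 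The counit $i_X^{-1} i_{X*} N \to N$ is always an isomorphism since $i_X$ is a homeomorphism onto its image; the unit $M \to i_{X*} i_X^{-1} M$ is an isomorphism for every $j_X^{\dagger}\calO_V$-module $M$ by a stalk argument identical to that in (i); and exactness of $i_X^{-1}$ together with compatibility of $i_{X*}$ with finite limits preserves quasi-coherence and finite presentation on both sides.

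For (iii), using (ii) it remains to establish $\varinjlim_{V'} \Coh \calO_{V'} \cong \Coh i_X^{-1}\calO_V$. Essential surjectivity uses goodness: a coherent $i_X^{-1}\calO_V$-module is, locally on affinoid neighborhoods of points of $]X[_V$, given by finite presentations, which spread out to presentations of coherent $\calO_{V'}$-modules on smaller affinoid ambient neighborhoods, and the transition isomorphisms patch after further shrinking. Full faithfulness follows because $\Hom$ of coherent sheaves is coherent: any morphism over $]X[_V$ extends to a neighborhood, and two such extensions agreeing on $]X[_V$ coincide on a common smaller neighborhood. Part (iii$'$) is the rigid-analytic analogue, obtained by the same pattern with \emph{strict neighborhood} replacing \emph{open neighborhood}; this is essentially the classical content of \cite{leStum:rigidBook}*{Section 5.1}.

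For (iv), the functor $E_0 \mapsto E_0^{\an} = \pi_0 E_0$ is well-defined between the two sides by the discussion preceding Definition \ref{D:jDagger}. Combining (iii) and (iii$'$) reduces the claim to a compatible equivalence of inductive systems: on the rigid side $\varinjlim_{V_0'} \Coh \calO_{V_0'}$ over strict rigid neighborhoods of $]X[_{V_0}$, and on the analytic side $\varinjlim_{V'} \Coh \calO_{V'}$ over analytic neighborhoods of $]X[_V$. Termwise, the equivalence $\Coh \calO_{V'} \cong \Coh \calO_{V_0'}$ holds for good $V'$ with its rigid counterpart $V_0'$, and the two systems are mutually cofinal: an analytic neighborhood of $]X[_V$ restricts to a strict rigid neighborhood of $]X[_{V_0}$, while conversely a strict rigid neighborhood extends to an analytic neighborhood via the identification $(V_G, \calO_{V_G}) \cong (V_0, \calO_{V_0})$. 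This cofinality plus the termwise equivalence yields the required isomorphism of filtered colimits, and the comparison is compatible with $\pi_0$ by construction.
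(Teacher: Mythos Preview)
Your proposal is correct and follows essentially the same architecture as the paper: parts (i), (iii), and (iii$'$) are in the paper just citations to \cite{leStum:site}*{2.2.12} and \cite{leStum:rigidBook}*{Theorem 5.4.4}, and your sketches expand exactly those arguments; part (iv) is identical to the paper's proof, which likewise reduces to the termwise equivalence $\Coh \calO_{V_0'} \cong \Coh \calO_{V'}$ plus the cofinality statement (the paper cites \cite{leStum:site}*{Corollary 1.3.2} for this).

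The only genuine difference is in (ii). The paper checks the unit $M \to i_{X*}i_X^{-1}M$ is an isomorphism by reducing, via a global presentation and exactness of $i_{X*}$ and $i_X^{-1}$, to the single case $M = j_X^\dagger\calO_V$. You instead argue directly on stalks: at $x \notin\,]X[_V$ the ring $(j_X^\dagger\calO_V)_x$ vanishes, so $M_x = 0$ for any $j_X^\dagger\calO_V$-module $M$, and the unit is an isomorphism there; at $x \in\,]X[_V$ it is tautological. This is shorter and in fact proves the stronger statement that $(i_X^{-1}, i_{X*})$ is an equivalence on \emph{all} modules, not just quasi-coherent ones. One small wording fix: your justification that the equivalence restricts to $\QCoh$ and $\Coh$ should invoke the \emph{exactness} of $i_{X*}$ (and its commutation with direct sums, which holds for a closed immersion of topological spaces), not merely its compatibility with finite limits, since preserving presentations is a right-exactness issue.
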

%%%%%%%%%%%%%%%%%%%%%%%%%%%%%%%%%%

In particular, $\Mod_{\fp} \calO^{\dagger}_{X,V}$ is equivalent to $\Coh j^{\dagger}_{X_0} \calO_{V_0}$.

%%%%%%%%%%%%%%%%%%%%%%%%%%%%%%%%%%%%%%%%%%%%%%%%%%%% 
\begin{proof}
%%%%%%%%%%%%%%%%%

Claim (i) is \cite{leStum:site}*{2.2.12}. 

For (ii), it suffices to check that the unit $\id \to i_{X*}i_X^{*}$ and counit $i_X^{*}i_{X*} \to \id$ of adjunction are isomorphisms (where $i_X^*$ is the composition of $i_X^{-1}$ and tensoring). By Remark \ref{R:geometricImmersions} the inclusion $i_X$ induces an immersion of topoi, and in particular the map $i_{X*}$ is fully faithful, so by Lemma \ref{L:unitFull} we conclude that the adjunction $i_X^{*}i_{X*} \to \id$ is an isomorphism. For the other direction, let $E \in \QCoh j_X^{\dagger}\calO_V$. We can check locally that the adjunction is an isomorphism, so we may assume that $E$ has a global presentation. Since $]X[_V$ is closed in $V$, $i_{X*}$ is exact \cite{Berkovich:nonArchEtaleCoh}*{4.3.2} (and $i^{-1}_X$ is always exact) so that the adjunction induces a diagram
\[
\xymatrix{
\bigoplus_I j_X^{\dagger}\calO_V   \ar[r] \ar[d] & 
  \bigoplus_J j_X^{\dagger}\calO_V \ar[r] \ar[d] & 
  E \ar[r] \ar[d] &
  0 \\
\bigoplus_I i_{X*}i_X^{-1}j_X^{\dagger}\calO_V  \ar[r]  & 
  \bigoplus_J i_{X*}i_X^{-1}j_X^{\dagger}\calO_V \ar[r]  & 
  i_{X*}i_X^{-1}E \ar[r] &
  0
}.
\]  
Thus to prove the claim it is thus enough to check that the adjunction $j_X^{\dagger} \calO_V \to i_{X*}i_X^{-1}j_X^{\dagger}\calO_V$ is an isomorphism, which is true since we can write this as $i_{X*}i_X^{-1}\calO_V \to i_{X*}i_X^{-1}i_{X*}i_X^{-1}\calO_V$, which is $i_{X*}$ applied to the adjunction $i_X^{-1}\calO_V \to i_X^{-1}i_{X*}i_X^{-1}\calO_V$ and thus an isomorphism (by the beginning of this paragraph).

Claim (iii) is \cite{leStum:site}*{Proposition 2.2.12} (and (ii)) and claim (iii') is \cite{leStum:rigidBook}*{Theorem 5.4.4}.

Finally, note that, from the explicit description of the functor $E \mapsto E^{\an}$, following diagram commutes
\[
\xymatrix{
\varinjlim \Coh \calO_{V_0'} \ar[r]\ar[d] & \Coh j_{X_0}^{\dagger} \calO_{V_0}\ar[d] \\
\varinjlim \Coh \calO_{V'} \ar[r] & \Coh j_X^{\dagger} \calO_V
}.
\]
Claim (iv) then follows from (iii) and (iii') together with \cite{leStum:site}*{Corollary 1.3.2} (which says that there is a cofinal system of neighborhoods $\{V'\}$ such that the system $\{V_0'\}$ is a cofinal system of strict neighborhoods) and the isomorphism 
\[
\Coh \calO_{V_0'} \cong \Coh \calO_{V'}.
\]

%%%%%%%%%%%%%%%%%  
\end{proof}  
%%%%%%%%%%%%%%%%%%%%%%%%%%%%%%%%%%

%%%%%%%%%%%%%%%%%%%%%%%%%%%%%%%%%%%%%%%%%%%%%%%%%%%% 
\begin{remark}
\label{R:jDaggerDescription}
%%%%%%%%%%%%%%%%%

A benefit of using Berkovich spaces instead of rigid analytic spaces is that the analogous construction $i_{X_0}^{-1}\calO_{V_0}$ in rigid geometry does not serve the same purpose, since the closed inclusion $i_{X_0}\colon ]X[_{V_0}\, \hookrightarrow V_0$ is also open and so $i_{X_0*}i_{X_0}^{-1} \calO_{V_0}$ is not isomorphic to $j_{X_0}^{\dagger}\calO_{V_0}$. If instead one lets $U$ denote the open complement of $]X[_{V_0} \subset V_0$ and then denotes by $i\colon Z \hookrightarrow \widetilde{V_0}$ the closed complement of $U \subset V_0$ in the sense of Section \ref{S:excisionTopoi}, then by \cite{leStum:rigidBook}*{5.1.12 (i)} the functor $j_{X_0}^{\dagger}$ is isomorphic to $i_*i^*$. This is a nice instance of the utility of the abstract notion of an immersion of topoi.

%%%%%%%%%%%%%%%%%  
\end{remark}  
%%%%%%%%%%%%%%%%%%%%%%%%%%%%%%%%%%

%-------------------------------------------------------------------------
\subsection{Infinitesimal Calculus} 
%-------------------------------------------------------------------------
We recall here several definitions from \cite{leStum:site}*{Section 2.4}.

Let $V,V' \to O$ be two morphisms of analytic spaces. Then by \cite{Berkovich:nonArchEtaleCoh}*{Proposition 1.4.1}, the fiber product $V\times_O V'$ exists -- when $V,V'$ and $O$ are affinoid spaces the fiber product is given by the Gelfand spectrum of the completed tensor product of their underlying algebras, and the global construction is given by glueing this construction. As usual the underlying topological space of $V\times_OV'$ is not the fiber product of their underlying topological spaces.

Let $V \to O$ be a morphism of analytic varieties. Then the diagonal morphism $\Delta\colon V \to V \times_O V$ is a $G$-locally closed immersion (see the comments after the proof of \cite{Berkovich:nonArchEtaleCoh}*{Proposition 1.4.1}). We define the relative sheaf of differentials $\Omega_{V/O}$ to be the conormal sheaf of $\Delta$. When $\Delta$ is a closed immersion defined by an ideal $I$, $\Omega_{V/O}$ is the restriction of $I/I^2$ to $V$; in general one can either define the conormal sheaf locally (and check that it glues) or argue that when $V$ is good, $\Delta$ factors as composition of a closed immersion $i\colon V \hookrightarrow U$ into an admissible open $U$, with $i$ defined by an ideal $J$, and define the conormal sheaf as the restriction of $J/J^2$.

Due to the use of completed tensor products, the sheaf of differentials is generally not isomorphic to the sheaf of Kahler differentials. It does however enjoy all of the usual properties; see \cite{Berkovich:nonArchEtaleCoh}*{3.3}.

%%%%%%%%%%%%%%%%%%%%%%%%%%%%%%%%%%%%%%%%%%%%%%%%%%%%
\begin{definition}
\label{D:MIC}
%%%%%%%%%%%%%%%%%
 
Let $(X,V) \to (C,O)$ be a morphism of overconvergent varieties. Suppose that $V$ is good and that $i^{-1}_X\colon ]X[_V \hookrightarrow V$ is closed. We define the category $\MIC(X,V/O)$ of \defi{overconvergent modules with integrable connection} to be the category of pairs $(M,\nabla)$, where $M \in \Mod i_X^{-1}\calO_V$ and $\nabla \colon M \to M \otimes_{i^{-1}_X\calO_V}i_X^{-1}\Omega^1_{V/O}$ is an $i_C^{-1}\calO_O$-linear map satisfying the Leibniz rule and such that the induced map $\nabla \circ \nabla\colon M \to M \otimes_{i^{-1}_X\calO_V}i_X^{-1}\Omega^2_{V/O}$ is zero. Morphisms $(M,\nabla) \to (M', \nabla')$ are morphisms $M \to M'$ as $i_X^{-1}\calO_V$-modules which respect the connections (see \cite{leStum:site}*{Definition 2.4.5}). Similarly, we define a category $\MIC(X_0,V_0/O_0)$ of such pairs $(M_0,\nabla_0)$ with $M_0 \in \Coh j_{X_0}^{\dagger} \calO_{V_0}$ (see \cite{leStum:rigidBook}*{Definition 6.1.8}).

Let $(M,\nabla) \in \MIC(X,V/O)$. Then $\nabla$ extends to a complex 
\[
 M \to M \otimes_{i^{-1}_X\calO_V}i_X^{-1}\Omega^1_{V/O} \to M \otimes_{i^{-1}_X\calO_V}i_X^{-1}\Omega^2_{V/O} \to M \otimes_{i^{-1}_X\calO_V}i_X^{-1}\Omega^3_{V/O} \to ...
\]
of abelian sheaves, which we call the \defi{de Rham complex} of $(E,\nabla)$ and write as $M \otimes_{i^{-1}_X\calO_V}i_X^{-1}\Omega^{\bullet}_{V/O}$. We define the de Rham complex of  $(M_0,\nabla_0) \in \MIC(X_0,V_0/O_0)$ similarly.

%%%%%%%%%%%%%%%%%
\end{definition}
%%%%%%%%%%%%%%%%%%%%%%%%%%%%%%%%%%

The bridge between crystals and modules with integrable connection is the notion of a stratification, which we now define.

%%%%%%%%%%%%%%%%%%%%%%%%%%%%%%%%%%%%%%%%%%%%%%%%%%%%
\begin{definition}
\label{D:strat}
%%%%%%%%%%%%%%%%%

Let $(X,V) \to (C,O)$ be a morphism of overconvergent varieties. Set $V^2 = V \times_O V$ and denote by 
\[
p_1,p_2\colon (X,V^2) \to (X,V)
\]
the two projections. We define an \defi{overconvergent stratification} on an $i_X^{-1}\calO_V$-module $M$ to be an isomorphism 
\[
\epsilon\colon p_2^{\dagger}M \cong p_1^{\dagger}M
\]
of $i^{-1}_X\calO_{V^2}$-modules satisfying the evident cocycle condition on triple products (see for example \cite{BerthelotO:notesCrystalline}*{Definition 2.10}). We denote the category of such pairs $(M,\epsilon)$ by $\Strat^{\dagger} (X,V/O)$, where morphisms are morphisms of $i_X^{-1}\calO_V$-modules which respect the stratification. We define the rigid variant $\Strat^{\dagger}(X_0,V_0/O_0)$ analogously.

%%%%%%%%%%%%%%%%%
\end{definition}
%%%%%%%%%%%%%%%%%%%%%%%%%%%%%%%%%%%

We omit a discussion of the notion of more general (than overconvergent) stratifications.
\\

%%%%%%%%%%%%%%%%%%%%%%%%%%%%%%%%%%%%%%%%%%%%%%%%%%%% 
\begin{remark}
%\label{R:}
%%%%%%%%%%%%%%%%%

One can relate crystals and overconvergent stratifications as follows. Let $(X,V) \to (C,O)$ be a morphism of overconvergent varieties and suppose that $(X,V)$ is a good overconvergent variety. Let $E \in \Cris^{\dagger} X_V/O$ and consider the diagram 
\[
\xymatrix{
(X,V^2) \cong (X,V)\times_{X_V/O}(X,V) 
  \ar@<+.5ex>[r]^<>(.5){p_1} \ar@<-.5ex>[r]_<>(.5){p_2} &
(X,V) 
  \ar[r]^p &
X_V/O
}.
\]
Then the composition $\epsilon$ of the two isomorphisms
\[
\epsilon\colon p_2^{\dagger}E_{X,V} \cong E_{X,V^2} \cong p_1^{\dagger} E_{X,V}
\]
(which exist by applying the condition that $E$ is a crystal to the maps $p_i$) defines a stratification on $E_{X,V}$ and thus a functor 
\begin{equation}
\label{eq:crisToStrat}
\Cris^{\dagger}X_V/O  \to  \Strat^{\dagger} (X,V/O), 
\end{equation} 
given by $E \mapsto (E_{X,V}, \epsilon)$. On the other hand, a stratification on $E \in \Mod i_X^{-1} \calO_V$ defines descent data on the crystal $\varphi^{*}_{X,V} E$ with respect to the map $p\colon (X,V) \to X_V/O$; by definition the map $p$ is a surjection of presheaves and thus a covering (in the canonical topology). By descent theory, the map \ref{eq:crisToStrat} is an equivalence of categories (see \cite{leStum:site}*{2.5.3}). 
\\

%%%%%%%%%%%%%%%%%
\end{remark}
%%%%%%%%%%%%%%%%%%%%%%%%%%%%%%%%%%

%%%%%%%%%%%%%%%%%%%%%%%%%%%%%%%%%%%%%%%%%%%%%%%%%%%
\begin{remark}
%%%%%%%%%%%%%%%%%

Here we relate the notion of a stratification and a module with connection. There is a map 
\[
\Strat^{\dagger} (X,V/O) \to \MIC (X,V/O)
\]
defined via the usual yoga of  `infinitesimal calculus', which we now recall. Let $V \hookrightarrow V^2 := V \times_O V$ be the diagonal morphism and denote by $V^{(n)}$ the $n^{\text{th}}$ infinitesimal neighborhood of the diagonal (when $V^{(0)} = V \hookrightarrow V^2$ is defined by an ideal $I$, and $V^{(n)} \hookrightarrow V^2$ is defined by the ideal $I^{n+1}$; in general one defines $V^{(n)}$ locally and glues). By definition the sequence
\[
0 \to \Omega^1_{V/O} \to \calO_V^{(1)} \to \calO_V \to 0
\]
is exact. We denote by $p_1^{(n)}$ and $p_2^{(n)}$ the two compositions 
\[
\xymatrix{
V^{(n)}
   \ar[r] &
V^2 
  \ar@<+.5ex>[r]^<>(.5){p_1} \ar@<-.5ex>[r]_<>(.5){p_2} &
V
}.
\]
Let $(M, \epsilon) \in \Strat^{\dagger} (X,V/O)$ be a module with an overconvergent stratification. Then $\epsilon$ restricts to give a compatible system $\{\epsilon^{(n)}\colon p_2^{(n)\dagger}M \cong p_1^{(n)\dagger}M\}$ of isomorphisms on $]X[_{V^{(n)}}$.

Denote by $\theta_i$ the natural map 
\[
\theta_i\colon M \to p_i^{(1)\dagger}M = M\otimes_{i_X^{-1}\calO_{V}} i_X^{-1}\calO_{V^{(1)}}
\] 
given by tensoring (noting that the underlying topological spaces of $V^{(i)}$ are the same). We define a connection $\nabla$ on $M$ by the formula
\[
\nabla = (\epsilon \circ \theta_2) - (\theta_1)\colon M \to p_1^{(1)\dagger}M = M\otimes_{i_X^{-1}\calO_{V}} i_X^{-1}\calO_{V^{(1)}}.
\]
The map $\nabla$ lands in $M\otimes_{i_X^{-1}\calO_{V}} i_X^{-1}\Omega^1_{V/O}$ by the description above of $\Omega^1_{V/O}$ together with the observation that since the two compositions
\[
\xymatrix{
V
  \ar[r] &
V^{(n)}
  \ar@<+.5ex>[r]^<>(.5){p_1} \ar@<-.5ex>[r]_<>(.5){p_2} &
V
}
\]
are equal, the composition 
\[
 M \to  M\otimes_{i_X^{-1}\calO_{V}} i_X^{-1}\calO_{V^{(1)}} \to M\otimes_{i_X^{-1}\calO_{V}} i_X^{-1}\calO_{V}
\]
is zero. Integrability of $\nabla$ follows from the cocycle condition.

%%%%%%%%%%%%%%%%%
\end{remark}
%%%%%%%%%%%%%%%%%%%%%%%%%%%%%%%%%%

Next we mildly refine the notion of a connection.

%%%%%%%%%%%%%%%%%%%%%%%%%%%%%%%%%%%%%%%%%%%%%%%%%%%
\begin{definition}
%%%%%%%%%%%%%%%%%

Let $(M, \nabla) \in \MIC(X,V/O)$. As in \cite{leStum:site}*{Definition 2.4.6}, we say that $\nabla$ is \defi{overconvergent} if $M$ is coherent and $(M, \nabla)$ is in the image of the map
\[
\Strat^{\dagger} (X,V/O) \to \MIC (X,V/O)
\]
and we denote the category of overconvergent modules with integrable connection by $\MIC^{\dagger}(X,V/O)$. We define $\MIC^{\dagger}(X_0,V_0/O_0)$ similarly, where $E_0$ is a $j_{X_0}^{\dagger}\calO_{V_0}$-module and the connection is a map $\nabla \colon E_0 \to E_0 \otimes_{\calO_{V_0}}\Omega^1_{V_0/O_0}$. 

When $(C,O) = (S_k,S_K)$ for a formal scheme $S$ and $V = P_K$ for a formal embedding $X \hookrightarrow P$ of $X$ into some formal scheme over $S$, we denote $\MIC^{\dagger}(X_0,V_0/O_0)$ by $\Isoc^{\dagger}(X \subset \overline{X}/S)$; by \cite{leStum:rigidBook}*{Corollary 8.1.9} this category only depends on the closure $\overline{X}$ of $X$ in $P$ and is independent of the choice of $P$, which we therefore omitted from the notation.

%%%%%%%%%%%%%%%%%
\end{definition}
%%%%%%%%%%%%%%%%%%%%%%%%%%%%%%%%%%

%%%%%%%%%%%%%%%%%%%%%%%%%%%%%%%%%%%%%%%%%%%%%%%%%%%
\begin{corollary}
%\label{P:}
%%%%%%%%%%%%%%%%%

The natural map $\MIC^{\dagger}(X_0,V_0/O_0) \to \MIC^{\dagger}(X,V/O)$ is an equivalence of categories.

%%%%%%%%%%%%%%%%%
\end{corollary}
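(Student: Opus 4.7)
The plan is to lift the equivalence of coherent sheaves $\Coh j_{X_0}^\dagger \calO_{V_0} \cong \Coh j_X^\dagger \calO_V$ from Proposition \ref{P:daggerYoga}(iv) to an equivalence on stratified coherent modules, and then to extract the $\MIC^\dagger$ equivalence from the defining description of $\MIC^\dagger$ as the essential image of $\Strat^\dagger \to \MIC$ restricted to coherent modules. The substantive content of the proof is the verification that analytification $(-)^\an$ matches up all the geometric data underlying an overconvergent stratification.

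To carry this out, I would first check that analytification is compatible with the relevant geometric constructions. Since the equivalence of $G$-topoi $V_G \simeq V_0$ preserves finite limits, the fiber product $V \times_O V$ has rigid analogue $V_0 \times_{O_0} V_0$, the diagonal immersions correspond, and the $n$th infinitesimal neighborhood of $V$ in $V\times_O V$ has a rigid counterpart whose analytification recovers it. Proposition \ref{P:daggerYoga}(iv) then applies to each of the overconvergent varieties $(X,V)$, $(X, V\times_O V)$, and $(X, V^{(n)})$, yielding equivalences of coherent $j_X^\dagger$-module categories at each level and in particular identifying $\Omega^i_{V/O}$ with the analytification of $\Omega^i_{V_0/O_0}$ as coherent sheaves. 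Because analytification is symmetric monoidal on coherent sheaves and commutes with pullback along the projections $p_i$ and $p_i^{(n)}$ (these being induced by the same morphisms on both sides), an overconvergent stratification $\epsilon_0 \colon p_{2,0}^\dagger M_0 \isomto p_{1,0}^\dagger M_0$ on a coherent $M_0$ analytifies to an overconvergent stratification on $M_0^\an$; the cocycle condition on $V \times_O V \times_O V$ transfers by applying the same coherent-module equivalence at that level.

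This produces an equivalence of the stratification categories on coherent modules, compatible with the forgetful functors into $\MIC$. Since the infinitesimal construction $\nabla = (\epsilon \circ \theta_2) - \theta_1$ that attaches a connection to a stratification uses only the restriction of $\epsilon$ to the first infinitesimal neighborhood together with the canonical tensor maps $\theta_i$, both of which are manifestly preserved by $(-)^\an$, the equivalence of stratification categories descends to an equivalence of their essential images in $\MIC$; by definition these essential images are $\MIC^\dagger(X_0, V_0/O_0)$ and $\MIC^\dagger(X, V/O)$, and this equivalence is precisely the natural map in the statement. The main obstacle is purely bookkeeping: one must keep track of the compatibility of $(-)^\an$ with fiber products, diagonals, infinitesimal neighborhoods, tensor products, and pullbacks simultaneously, but the genuinely new analytic content is already packaged in Proposition \ref{P:daggerYoga}(iv).
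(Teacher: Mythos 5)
Your proposal is correct and takes essentially the same route as the paper: the paper's own proof is the one-line reduction to coherent modules with overconvergent stratification followed by an appeal to Proposition \ref{P:daggerYoga} (iii) and (iv), and your argument just makes explicit the compatibility checks (fiber products, diagonals, infinitesimal neighborhoods, tensor products, pullbacks) that this citation leaves implicit.
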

%%%%%%%%%%%%%%%%%%%%%%%%%%%%%%%%%%

%%%%%%%%%%%%%%%%%%%%%%%%%%%%%%%%%%%%%%%%%%%%%%%%%%%% 
\begin{proof}
%%%%%%%%%%%%%%%%%

It suffices to prove this for coherent modules with overconvergent stratification, which follows from Proposition \ref{P:daggerYoga} (iii) and (iv).

%%%%%%%%%%%%%%%%%  
\end{proof}  
%%%%%%%%%%%%%%%%%%%%%%%%%%%%%%%%%%

%%%%%%%%%%%%%%%%%%%%%%%%%%%%%%%%%%%%%%%%%%%%%%%%%%%
\begin{remark}
%%%%%%%%%%%%%%%%%

The composition 
\[
\Cris^{\dagger}X_V/O \cong \Strat^{\dagger} (X,V/O) \to \MIC (X,V/O)
\] 
induces an equivalence of categories
\[
\Mod^{\dagger}_{\fp}(X_V/O) \cong \MIC^{\dagger}(X,V/O);
\]
see \cite{leStum:site}*{remark after 2.4.5}.

%%%%%%%%%%%%%%%%%
\end{remark}
%%%%%%%%%%%%%%%%%%%%%%%%%%%%%%%%%%

The following theorem of le Stum ties this discussion together with Proposition \ref{P:coverings} to give an intrinsic characterization of isocrystals via the good overconvergent site and in particular gives a new proof of the independence of $\Isoc^{\dagger}(X \subset \overline{X})$ from the choice of compactification $\overline{X}$.

%%%%%%%%%%%%%%%%%%%%%%%%%%%%%%%%%%%%%%%%%%%%%%%%%%%% 
\begin{theorem}[\cite{leStum:site}, Corollary 2.5.11]
\label{T:coefficients}
%%%%%%%%%%%%%%%%%

Let $S$ be a formal $\calV$-scheme and let $X/S_k$ be a realizable algebraic variety. Then there is an equivalence of categories $\Mod_{\fp}(\calO^{\dagger}_{X_{\g}/S}) \cong \Isoc^{\dagger}(X \subset \overline{X}/S)$.

%%%%%%%%%%%%%%%%%  
\end{theorem}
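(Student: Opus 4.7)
The plan is to chain together the equivalences already assembled in Sections \ref{S:overconvergentSite} and \ref{S:overconvergentCalculus}. Since $X/S_k$ is realizable, choose a geometric realization $(X,V) \to (S_k, S_K)$ with $X \hookrightarrow P$ a formal embedding over $S$ and $V$ a neighborhood of $]X[_{P_K}$ in $P_K$. After replacing $V$ by $]\overline{X}[_V$ we may assume $i_X\colon ]X[_V \hookrightarrow V$ is a closed immersion, and after a local argument we may assume $(X,V)$ is good in the sense of Definition \ref{D:good} (this is where we use that $\Mod_{\fp} \calO^{\dagger}_{X_{\g}/S}$ is defined on the good site, so we only need a good realization locally, and the independence of all constructions from the choice of $V$ glues these to a single global equivalence).

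Next, I would string together the following equivalences. First, Proposition \ref{P:coverings}(ii) and (iii) give $\Cris_{\g}^{\dagger} X/S \cong \Cris_{\g}^{\dagger} X_V/S$, and restricting to finitely presented modules yields $\Mod_{\fp} \calO^{\dagger}_{X_{\g}/S} \cong \Mod_{\fp} \calO^{\dagger}_{(X_V)_{\g}/S}$. Since $(X,V)$ is good, Proposition \ref{P:coverings}(iv) then produces
\[
\Mod_{\fp} \calO^{\dagger}_{(X_V)_{\g}/S} \cong \Mod_{\fp} \calO^{\dagger}_{X_V/S}.
\]
Now invoke the descent equivalence $\Cris^{\dagger} X_V/S \cong \Strat^{\dagger}(X,V/S)$ (the map (\ref{eq:crisToStrat}), which is an equivalence by descent along the covering $p\colon (X,V) \to X_V/S$), restricted to finitely presented objects.

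The third step is the passage from stratifications to integrable connections via infinitesimal calculus, which gives a functor $\Strat^{\dagger}(X,V/S) \to \MIC(X,V/S)$ whose essential image on finitely presented objects is, by definition, the subcategory $\MIC^{\dagger}(X,V/S)$; combined with the remark following Definition \ref{D:MIC} this gives an equivalence
\[
\Mod_{\fp} \calO^{\dagger}_{X_V/S} \cong \MIC^{\dagger}(X,V/S).
\]
Finally, the corollary just before Theorem \ref{T:coefficients} supplies $\MIC^{\dagger}(X_0,V_0/S_K) \cong \MIC^{\dagger}(X,V/S)$, and by definition $\MIC^{\dagger}(X_0,V_0/S_K) = \Isoc^{\dagger}(X \subset \overline{X}/S)$. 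Composing all of these yields the desired equivalence.

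The main obstacle I expect is purely bookkeeping rather than conceptual: one must check that the global object $\Mod_{\fp} \calO^{\dagger}_{X_{\g}/S}$ — defined intrinsically without reference to a compactification — really can be computed from a single geometric realization $(X,V)$, which requires knowing that the realization assumption survives the good-site restriction and that the composition of equivalences is independent (up to canonical isomorphism) of $(X,V)$. Independence on the right-hand side is known from \cite{leStum:rigidBook}*{Corollary 8.1.9}; on the left-hand side it is forced by the functoriality of Proposition \ref{P:coverings}(ii), since any two realizations are dominated by their fiber product and the induced maps between the $X_V/S$'s are coverings. Once this is verified, the theorem follows by transitivity of the above chain.
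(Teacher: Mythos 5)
Your chain of equivalences --- Proposition \ref{P:coverings} to pass from $\Mod_{\fp}(\calO^{\dagger}_{X_{\g}/S})$ to $\Mod_{\fp}(\calO^{\dagger}_{X_V/S})$, descent along $(X,V) \to X_V/S$ to $\Strat^{\dagger}(X,V/S)$, infinitesimal calculus to $\MIC^{\dagger}(X,V/S)$, and the rigid/Berkovich comparison to $\Isoc^{\dagger}(X \subset \overline{X}/S)$ --- is exactly the route assembled in the discussion preceding the theorem (the paper itself simply cites \cite{leStum:site}*{Corollary 2.5.11} rather than writing out the argument), so your proposal is correct and essentially the same approach. The only point treated a bit loosely is the goodness hypotheses on $(S_k,S_K)$ and on the realization $(X,V)$ needed to invoke Proposition \ref{P:coverings}, which you flag but do not fully verify.
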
  
%%%%%%%%%%%%%%%%%%%%%%%%%%%%%%%%%%

Le Stum proves a similar result for cohomology, which we recall below.

%%%%%%%%%%%%%%%%%%%%%%%%%%%%%%%%%%%%%%%%%%%%%%%%%%%%
\begin{definition}[\cite{leStum:site}*{Definition 3.5.1}]
%%%%%%%%%%%%%%%%%

Let $(C,O)$ be an overconvergent variety, let $f\colon X' \to X$ be a morphism of schemes over $C$. Then $f$ induces a morphism of topoi $f_{\AN^{\dagger}_{\g}} \colon X'/O_{\AN^{\dagger}_{\g}} \to X/O_{\AN^{\dagger}_{\g}}$. For $F \in (X'/O)_{\AN^{\dagger}_{\g}}$ be a sheaf of abelian groups (or more generally any bounded below complex of abelian sheaves) we define the \defi{relative rigid cohomology} of $F$ to be $\mathbb{R}f_{\AN^{\dagger}_{\g}*}F$. 

When $(C,O) = (\Spec k, \calM(K))$ and $X = \Spec k$, for an integer $i \geq 0$ we define the \defi{absolute rigid cohomology} of $F$ to be the $K$-vector space $H^i(\AN^{\dagger}_{\g} X', F) :=  (\mathbb{R}^if_{\AN^{\dagger}_{\g}*}F)_{(\Spec k, \calM(K))}$; since the realization functor is exact this is isomorphic to the $i^{\Th}$ derived functor of the global sections functor. When $F = \calO^{\dagger}_{X'_{\g}}$, we write $H^i(\AN^{\dagger}_{\g} X') := H^i(\AN^{\dagger}_{\g} X', \calO^{\dagger}_{X'_{\g}})$.

%%%%%%%%%%%%%%%%%
\end{definition}
%%%%%%%%%%%%%%%%%%%%%%%%%%%%%%%%%%%

%%%%%%%%%%%%%%%%%%%%%%%%%%%%%%%%%%%%%%%%%%%%%%%%%%%
\begin{remark}
%%%%%%%%%%%%%%%%%

  The functor $F \mapsto F_{\g}$ is exact on abelian sheaves (since goodness of $(X,V)$ is local on $V$), so when computing cohomology we can derive either of the functors $f_{\AN^{\dagger}*}$ or $f_{\AN^{\dagger}_{\g}*}$.

%%%%%%%%%%%%%%%%%
\end{remark}
%%%%%%%%%%%%%%%%%%%%%%%%%%%%%%%%%%

Now we explain how to compare the cohomology on the overconvergent site to classical rigid cohomology. Let $j_{X,V}\colon (X,V) \to X/O$ be an overconvergent variety over $X/O$ and let $E \in \Cris_{\g}^{\dagger} X/O$ be a crystal. Then the adjunction
\[
E \to j_{X,V*}j_{X,V}^*E \cong j_{X,V*}\varphi_{X,V}^{*}E_{X,V}
\]
(where the second map is an isomorphism by Remark \ref{R:crystalEquivalence}) induces by \cite{leStum:site}*{Proposition 3.3.10 (ii)} a map 
\[
E \to \mathbb{R}j_{X,V*}\left(\varphi_{X,V}^* \left(E_{X,V}\otimes_{i_X^{-1}\calO_V} i_X^{-1}\Omega^{\bullet}_{V/O}\right)\right)
\]
of complexes of $\calO^{\dagger}_{(X/O)_{\g}}$-modules. In the following nice situation this map is a quasi-isomorphism and we can thus compute the cohomology of $E$ via the cohomology of the de Rham complex $E_{X,V}\otimes_{i_X^{-1}\calO_V} i_X^{-1}\Omega^{\bullet}_{V/O}$.

%%%%%%%%%%%%%%%%%%%%%%%%%%%%%%%%%%%%%%%%%%%%%%%%%%%% 
\begin{theorem}
\label{T:deRham}
%%%%%%%%%%%%%%%%%

Let $(C,O)$ be an overconvergent variety and suppose that $(X,V)$ is a geometric realization of the morphism $X \to C$, and denote by $p_{\AN^{\dagger}_{\g}}$ the morphism of topoi $p_{\AN^{\dagger}_{\g}} \colon (X/O)_{\AN^{\dagger}_{\g}} \to (C,O)_{\AN^{\dagger}_{\g}}$. 
Then the following are true.

\begin{itemize}
\item [(i)] The augmentation 
\[
E \to \mathbb{R}j_{X,V*}\left(\varphi_{X,V}^* \left(E_{X,V}\otimes_{i_X^{-1}\calO_V} i_X^{-1}\Omega^{\bullet}_{V/O}\right)\right)
\]
is an isomorphism.

\item [(ii)] The natural map  
\[
\left(\mathbb{R}p_{\AN^{\dagger}_{\g}*} E \right)_{C,O} \to \mathbb{R}p_{]X[_{V}*}\left(E_{X,V}\otimes_{i_X^{-1}\calO_V} i_X^{-1}\Omega^{\bullet}_{V/O}\right)
\]
(induced by part (i)) is a quasi-isomorphism.

\end{itemize}

%%%%%%%%%%%%%%%%%
\end{theorem}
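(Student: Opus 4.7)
My approach is to treat (i) as an overconvergent Poincar\'e lemma — the substantive content of the theorem — and then deduce (ii) essentially formally from (i) by composing derived pushforwards and invoking the compatibility of the realization functor with direct images under morphisms of overconvergent varieties. The geometric inputs to (i) are the smoothness of $v$ near $X$ and the properness of $v$ at $X$, both of which are part of the definition of a geometric realization.

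For (i), by Proposition~\ref{P:sheafReal} a map of complexes on $(X/O)_{\AN^{\dagger}_{\g}}$ is a quasi-isomorphism if and only if it induces one on realizations at every good $(X',V') \to X/O$, so I would fix such a test object. By Proposition~\ref{P:coverings}(i) the projection $p\colon (X,V) \to X_V/O$ is a covering, and combining this with the equivalence $(X_V/O)_{\AN^{\dagger}_{\g}} \cong (X/O)_{\AN^{\dagger}_{\g}}$ of Proposition~\ref{P:coverings}(ii) allows me to compute $\mathbb{R}j_{X,V*}\bigl(\varphi_{X,V}^*(E_{X,V}\otimes i_X^{-1}\Omega^{\bullet}_{V/O})\bigr)$ through the \v{C}ech cosimplicial object associated with the iterated fibre products $V^{n+1} = V \times_O \cdots \times_O V$. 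The crystal hypothesis on $E$ then identifies the resulting \v{C}ech terms with $E_{X,V}$ tensored along the first projection with $i_X^{-1}\calO_{V^{n+1}}$, so the problem reduces to comparing this \v{C}ech complex of tubes with the de Rham complex.

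The main obstacle is this comparison, namely the overconvergent Poincar\'e lemma proper. Following Berthelot's strategy in the classical rigid setting (exposited in \cite{leStum:rigidBook}*{Proposition 7.1.10 and Section 7.2}), I would filter $\calO_{V^{n+1}}$ by the ideals cutting out the infinitesimal neighborhoods $V^{(n)}$ of the diagonal and use smoothness of $v$ near $X$ to produce local \'etale coordinates, in terms of which the thickenings are explicit convergent power series rings whose radius of convergence can be controlled on strict neighborhoods of $]X[_V$. The associated graded of this filtration is a Koszul-type complex whose total complex, together with the integrability coming from the stratification-to-connection correspondence developed after Definition~\ref{D:strat}, collapses onto $E_{X,V} \otimes i_X^{-1}\Omega^{\bullet}_{V/O}$. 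Passing to the limit over strict neighborhoods allows one to work with the $j^{\dagger}$-version throughout.

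For (ii), applying $\mathbb{R}p_{\AN^{\dagger}_{\g}*}$ to the isomorphism in (i) and using Leray for the composite $p \circ j_{X,V}\colon (X,V) \to (C,O)$ yields
\[
\mathbb{R}p_{\AN^{\dagger}_{\g}*} E \;\isom\; \mathbb{R}(p\circ j_{X,V})_{\AN^{\dagger}_{\g}*}\bigl(\varphi_{X,V}^*(E_{X,V} \otimes i_X^{-1}\Omega^{\bullet}_{V/O})\bigr).
\]
Evaluating at $(C,O)$ and invoking the compatibility of realization with pushforward along a morphism of overconvergent varieties — which under properness of $v$ at $X$ identifies $(\mathbb{R}f_{\AN^{\dagger}_{\g}*} F)_{C,O}$ with $\mathbb{R}]f[_{u*}(F_{X,V})$, via Remark~\ref{R:ringedRealization} and the adjunction between $\varphi_{X,V}^*$ and $\varphi_{X,V*}$ — recovers $\mathbb{R}p_{]X[_V*}\bigl(E_{X,V} \otimes i_X^{-1}\Omega^{\bullet}_{V/O}\bigr)$, as desired.
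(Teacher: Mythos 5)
Your proposal is correct in outline, but it takes a genuinely different route from the paper for the simple reason that the paper does not prove this theorem at all: its proof is a two-line citation, attributing (i) to \cite{leStum:site}*{Proposition 3.5.4} and (ii) to \cite{leStum:site}*{Theorem 3.5.3} via \cite{leStum:site}*{Proposition 3.3.9}. What you have written is essentially a reconstruction of le Stum's own argument: reduce (i) to realizations, use that $(X,V) \to X_V/O$ is a covering (Proposition \ref{P:coverings}) to express $\mathbb{R}j_{X,V*}$ through the \v{C}ech cosimplicial object on the products $V^{n+1}$, use the crystal property to identify the terms, and then invoke an overconvergent Poincar\'e lemma; and deduce (ii) formally by Leray plus compatibility of realization with derived pushforward. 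This is the right architecture, and it is useful to have it spelled out where the paper only points to the literature.

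Two caveats on where your sketch is thinnest. First, the Poincar\'e lemma step is the entire mathematical content of (i), and the filtration you propose by the infinitesimal neighborhoods $V^{(n)}$ of the diagonal only yields, by itself, the formal/convergent statement; in the overconvergent setting the Koszul-type collapse has to be carried out on genuine strict neighborhoods of $]X[_{V^{n+1}}$, using \'etale coordinates from smoothness of $v$ near $X$ and the overconvergence of the stratification to control the radii of the Taylor isomorphisms. You gesture at this, but it is precisely the part that cannot be done "formally" and occupies le Stum's Section 3.4 (resting on the rigid-analytic Poincar\'e lemma of \cite{leStum:rigidBook}). Second, in (ii) you treat the identification $(\mathbb{R}f_{\AN^{\dagger}_{\g}*}F)_{C,O} \cong \mathbb{R}]f[_{u*}F_{X,V}$ as following from Remark \ref{R:ringedRealization} and adjunction; it does not — it is a derived base-change statement with its own proof, namely \cite{leStum:site}*{Proposition 3.3.9}, which is exactly the ingredient the paper cites for (ii). Neither point is a wrong step, but both are places where your proposal imports a substantial external result rather than proving it.
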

%%%%%%%%%%%%%%%%%%%%%%%%%%%%%%%%%%

Of course, one can compute any other realization $\left(\mathbb{R}p_{\AN^{\dagger}_{\g}*} E \right)_{C',O'}$ of relative rigid cohomology from (ii) by base change \cite{leStum:site}*{Corollary 3.5.7}.

%%%%%%%%%%%%%%%%%%%%%%%%%%%%%%%%%%%%%%%%%%%%%%%%%%%% 
\begin{proof}
%%%%%%%%%%%%%%%%%

Claim (i) is \cite{leStum:site}*{Proposition 3.5.4} and claim (ii) is \cite{leStum:site}*{Theorem 3.5.3} (which follows from (i) by \cite{leStum:site}*{Proposition 3.3.9}).

%%%%%%%%%%%%%%%%%
\end{proof}
%%%%%%%%%%%%%%%%%%%%%%%%%%%%%%%%%% 

One can compare this with the classical notions of rigid cohomology, which we now recall.

%%%%%%%%%%%%%%%%%%%%%%%%%%%%%%%%%%%%%%%%%%%%%%%%%%%%
\begin{definition}[\cite{leStum:rigidBook}*{Definition 8.2.5}]
%%%%%%%%%%%%%%%%%

 Let $S$ be a formal $\calV$-scheme, let $f\colon X \to S_k$ be a morphism of algebraic varieties, let $X \hookrightarrow P$ be a formal embedding over $S$ and denote by $g$ the map $]\overline{X}[_{(P_K)_0} \to (S_K)_0$. Let $E_0 \in \Isoc^{\dagger}(X \subset \overline{X}/S) := \MIC^{\dagger}(X_0,(P_K)_0/(S_K)_0)$. We define the classical rigid cohomology $\mathbb{R}f_{\rig}E_0$ of $E_0$ to be the higher direct image $\mathbb{R}g_*(E_0 \otimes \Omega_{]\overline{X}[_{(P_K)_0}/(S_K)_0}^{\bullet})$ of the de Rham complex associated to $(E_0, \nabla)$ (considered as a complex of abelian sheaves). When $S = \Spf \calV$, we call this the absolute rigid cohomology and denote its $i^{\Th}$ homology by $H^i_{\rig}(X,E_0)$.

%%%%%%%%%%%%%%%%%
\end{definition}
%%%%%%%%%%%%%%%%%%%%%%%%%%%%%%%%%%%

Actually, rigid cohomology is independent of the choice of $P$ and $\overline{X}$ \cite{leStum:rigidBook}*{Proposition 8.2.1}, which we thus do not mention in the following theorem. When no choice of $P$ exists one can define $\Isoc^{\dagger}(X)$ and rigid cohomology by cohomological descent \cite{ChiarellottoT:etaleRigidDescent}.

%%%%%%%%%%%%%%%%%%%%%%%%%%%%%%%%%%%%%%%%%%%%%%%%%%%% 
\begin{theorem}[\cite{leStum:site}, Proposition 3.5.8]
\label{T:cohomology}
%%%%%%%%%%%%%%%%%

 Let $S$ be a formal $\calV$-scheme such that $(S_k,S_K)$ is a good overconvergent variety and let $f\colon X \to S_k$ be a morphism of algebraic varieties. Let $(X,P)$ be a geometric realization of $X \to S_k$ and denote by $\overline{X}$ the closure of $X$ in $P$. Then for any $E \in \Mod^{\dagger}_{\fp,\g} (X/S)$ and $E_0 \in \Isoc^{\dagger}(X \subset \overline{X}/S)$ such that $E_{X,P} \cong i_X^{-1}E_0^{\an}$, there is a natural map (in the derived category)
	\[
	  i_{S_k}^{-1}(\mathbb{R}f_{\rig}E_0)^{\an} \to (\mathbb{R}f_{\AN^{\dagger}_{\g} *}E)_{(S_k,S_K)}
	\]
which is a quasi-isomorphism.

%%%%%%%%%%%%%%%%%
\end{theorem}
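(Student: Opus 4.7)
The plan is to deduce the comparison with supports from the already-established comparison without supports (Theorem \ref{T:cohomology}) by invoking the excision distinguished triangles on both sides and applying the five-lemma in the derived category.

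First, I would write down the two excision triangles. Proposition \ref{P:excisionRelationsDagger} (ii) gives the overconvergent triangle
\[
\underline{\Gamma}^{\dagger}_Z E \to E \to \mathbb{R}j_{*} j^{*} E \xrightarrow{+1},
\]
where $j$ denotes the inclusion of (a sheaf representing) the open complement $U := X \setminus Z$. Applying $\mathbb{R}p_{\AN^{\dagger}_{*}}$ and realizing at $(S_k, S_K)$ yields a distinguished triangle in $\mathbb{D}_+(]S_k[_{S_K})$. In parallel, the rigid analogue \cite{leStum:rigidBook}*{Proposition 6.3.9} applied to $E_0$, followed by $\mathbb{R}p_{\rig}$ and analytification $(-)^{\an}$, produces a distinguished triangle whose right two terms agree --- via Theorem \ref{T:cohomology} applied to $X$ and to $U$ respectively --- with the corresponding terms on the overconvergent side. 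For the restriction to $U$, one uses that an open subvariety of a geometrically realizable variety inherits a geometric realization (by restricting the formal embedding), and that the pushforward $\mathbb{R}j_{*}j^{*}$ along the open inclusion computes the cohomology of $E|_{U}$ on both sides in a manner compatible with the realization functors.

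The next step is to assemble the comparison into a morphism of distinguished triangles. Naturality of the comparison map of Theorem \ref{T:cohomology} in the coefficient sheaf furnishes a commutative square between the right-hand portions of the two triangles; completing this to a morphism of triangles produces a canonical map on the leftmost vertices, namely
\[
\left(\mathbb{R}p_{\rig}\underline{\Gamma}^{\dagger, \Ber}_Z E_0\right)^{\an} \to \left(\mathbb{R}p_{\AN^{\dagger}_{*}}\underline{\Gamma}^{\dagger}_Z E\right)_{S_k,S_K}.
\]
By the derived five-lemma (if two of three maps in a morphism of distinguished triangles are quasi-isomorphisms, so is the third), this map is an isomorphism, and compatibility with the two excision exact sequences is built into its construction. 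Functoriality in $(E, E_0)$ is inherited from the functoriality of each ingredient.

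The main obstacle is verifying that the formalism of $\underline{\Gamma}^{\dagger}_Z$, defined via the open/closed subtopoi machinery of \cite{SGA4:I}, indeed gives a morphism of triangles which, under realization, agrees with Berthelot's classical $\underline{\Gamma}^{\dagger, \Ber}_Z$. Concretely, one must check that Proposition \ref{P:sheafReal} (and its refinement for $\calO^{\dagger}$-modules in Proposition \ref{P:moduleReal}) sends the SGA open/closed triangle to the Berthelot triangle --- an identification that should reduce, using Remark \ref{R:jDaggerDescription}, to the identification of $i_{*} i^{*}$ on the relevant topos of tubes with Berthelot's functor $j_{X_0}^{\dagger}$ of strict neighborhoods. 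Once this local identification is in place on realizations, the two natural comparison maps for $E$ and for $j_{*}j^{*}E$ automatically fit into a commutative square compatible with the excision triangles, and the five-lemma argument concludes the proof.
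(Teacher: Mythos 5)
The statement you were asked to prove is Theorem \ref{T:cohomology}, the comparison \emph{without} supports: the construction of a natural map $i_{S_k}^{-1}(\mathbb{R}f_{\rig}E_0)^{\an} \to (\mathbb{R}f_{\AN^{\dagger}_{\g}*}E)_{(S_k,S_K)}$ and the assertion that it is a quasi-isomorphism. Your proposal does not prove this; it takes Theorem \ref{T:cohomology} as an ``already-established'' input and uses the two excision triangles together with the derived five lemma to deduce the comparison \emph{with} supports, $\left(\mathbb{R}p_{\rig}\underline{\Gamma}^{\dagger,\Ber}_Z E_0\right)^{\an} \cong \left(\mathbb{R}p_{\AN^{\dagger}_*}\underline{\Gamma}^{\dagger}_Z E\right)_{S_k,S_K}$. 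That is precisely the statement and the proof of the paper's main Theorem \ref{P:agreementClosedSupportsCohomology} (excision triangles on both sides, Theorem \ref{T:cohomology} applied to $X$ and to the open complement, five lemma), so as an argument for Theorem \ref{T:cohomology} itself it is circular: the target statement appears as a hypothesis, and nothing in your write-up establishes it.

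What is actually needed for Theorem \ref{T:cohomology} is quite different and does not involve $\underline{\Gamma}^{\dagger}_Z$ or excision at all. The paper attributes the quasi-isomorphism to le Stum (\cite{leStum:site}*{Proposition 3.5.8}) and supplies the construction of the map: write $\mathbb{R}f_{\rig}E_0$ as $\mathbb{R}u_{0*}$ of the de Rham complex $E_0\otimes_{\calO_{O_0}}\Omega^{\bullet}_{V_0/O_0}$ on the rigid tube; use smoothness of $P\to S$ in a neighborhood of $X$ to see that this complex has coherent terms, so it analytifies to $E'\otimes_{\calO_O}\Omega^{\bullet}_{V/O}$ with $i_X^{-1}E'\cong E_{(S_k,S_K)}$, giving a map $\left(\mathbb{R}u_{0*}(\cdot)\right)^{\an}\to\mathbb{R}u_*\left((\cdot)^{\an}\right)$; then use exactness of $i_X^{-1}$ and $i_{S_k}^{-1}$ to commute the tube pullback past $\mathbb{R}u_*$, and finally invoke the de Rham description of Theorem \ref{T:deRham}\,(ii) to identify the resulting complex with $\left(\mathbb{R}f_{\AN^{\dagger}_{\g}*}E\right)_{(S_k,S_K)}$. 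The substantive content (that this composite is a quasi-isomorphism) rests on that de Rham comparison on the overconvergent site, not on any five-lemma reduction. If you want to retain your excision argument, it belongs to the proof of Theorem \ref{P:agreementClosedSupportsCohomology}, where it is indeed the paper's route; for the present statement you must either cite le Stum or reproduce the de Rham/analytification argument above.
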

%%%%%%%%%%%%%%%%%%%%%%%%%%%%%%%%%%

The natural map is constructed as follows. Denote by $V$ the tube $]\overline{X}[_{P_K}$, by $O$ the analytic space $S_K$, and by $u$ the map $V \to O$. There is a natural map 
\[
\left(\mathbb{R}f_{\rig}E_0\right)^{\an} = 
\left(\mathbb{R}u_{0*}\left(E_0 \otimes_{\calO_{O_0}} \Omega^{\bullet}_{V_0/O_0}\right)\right)^{\an} \to  
\mathbb{R}u_*\left(E_0 \otimes_{\calO_{O_0}} \Omega^{\bullet}_{V_0/O_0}\right)^{\an}.
\]
Since $V$ is smooth in a neighborhood of the tube $]X[_{P_K}$, $\Omega^{\bullet}_{V_0/O_0}$ is locally free in such a neighborhood. Thus the tensor product $E_0 \otimes_{\calO_{O_0}} \Omega^{\bullet}_{V_0/O_0}$ has coherent terms and analytifies to $E' \otimes_{\calO_{O}} \Omega^{\bullet}_{V/O}$, where $i^{-1}_X E' \cong E_{(S_k,S_K)}$. Furthermore, since $i_X^{-1}$ and $i_{S_k}^{-1}$ are exact, there are isomorphisms
\[
i^{-1}_{S_k}\mathbb{R}u_*\left(E' \otimes_{\calO_{O}} \Omega^{\bullet}_{V/O}\right) 
\cong 
\mathbb{R}_{]f[_*}i^{-1}_X \left(E' \otimes_{\calO_{O}} \Omega^{\bullet}_{V/O}\right)  
\cong 
\mathbb{R}_{]f[_*} \left( E_{(S_k,S_K)} \otimes_{i^{-1}_X\calO_{O}} i^{-1}_X \Omega^{\bullet}_{V/O}\right)
\]
By Theorem \ref{T:deRham} (ii), the last term is isomorphic to $\left(\mathbb{R}f_{\AN^{\dagger}_{\g}*}E\right)_{(S_k,S_K)}$. Applying $i^{-1}_{S_k}$ and composing these isomorphisms gives the natural map.
\\

We end by stating a corollary of the comparison theorem.

%%%%%%%%%%%%%%%%%%%%%%%%%%%%%%%%%%%%%%%%%%%%%%%%%%%% 
\begin{theorem}
\label{T:cohomologyCorollary}
%%%%%%%%%%%%%%%%%

Let $X$ be an algebraic variety over $k$. Let $(X,P)$ be a geometric realization of $X \to S_k$ and denote by $\overline{X}$ the closure of $X$ in $P$. Then for any $E \in \Mod^{\dagger}_{\fp} (X_{\g})$ and $E_0 \in \Isoc^{\dagger}(X \subset \overline{X}/k)$ such that $E_{X,P} \cong i_X^{-1}E_0^{\an}$, there is a natural map
	\[
	  H^i_{\rig}(X,E_0) \to H^i(\AN^{\dagger}_{\g} X,E)
	\]
which is an isomorphism.

%%%%%%%%%%%%%%%%%
\end{theorem}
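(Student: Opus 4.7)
The plan is to deduce this statement as the absolute specialization of Theorem \ref{T:cohomology}. Take $S = \Spf \calV$, so that $S_k = \Spec k$ and $S_K = \MM(K)$. The overconvergent variety $(\Spec k, \MM(K))$ is good, since $\MM(K)$ is a single point and hence (trivially) an affinoid. By hypothesis we already have a geometric realization $(X,P)$ of $f\colon X \to S_k$, and the comparison hypothesis $E_{X,P} \cong i_X^{-1}E_0^{\an}$ of Theorem \ref{T:cohomology} is part of the assumption. Thus the theorem produces, in the derived category, a natural quasi-isomorphism
\[
i_{S_k}^{-1}(\mathbb{R}f_{\rig}E_0)^{\an} \;\isomto\; (\mathbb{R}f_{\AN^{\dagger}_{\g}*}E)_{(\Spec k, \MM(K))}.
\]

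Next I would unwind the two sides in the absolute case and take $H^i$. On the right, by the definition of absolute rigid cohomology on the overconvergent site we have $H^i$ of $(\mathbb{R}f_{\AN^{\dagger}_{\g}*}E)_{(\Spec k, \MM(K))}$ equal to $H^i(\AN^{\dagger}_{\g} X, E)$; here we use that the realization functor $F \mapsto F_{(\Spec k, \MM(K))}$ is exact, as noted after Theorem \ref{T:deRham} and in the definition of absolute rigid cohomology. On the left, since $S_k = \Spec k$ is a point, the functor $i_{S_k}^{-1}$ is (up to equivalence) evaluation at the point and commutes with taking cohomology; moreover the analytification of the complex $\mathbb{R}f_{\rig}E_0$ over a point is just the same complex of $K$-vector spaces, so applying $H^i$ recovers $H^i_{\rig}(X,E_0)$ by the definition of the latter as the $i^{\Th}$ homology of $\mathbb{R}f_{\rig}E_0$ when $S = \Spf \calV$.

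Combining these identifications with the quasi-isomorphism of Theorem \ref{T:cohomology} yields the natural isomorphism
\[
H^i_{\rig}(X,E_0) \isomto H^i(\AN^{\dagger}_{\g} X, E),
\]
and naturality in $(X, E, E_0)$ is inherited from the construction just before Theorem \ref{T:cohomology}, which is functorial in the realization data.

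The only real point to verify is that the natural map constructed here coincides with the one described informally between the two theorems; the main obstacle is thus bookkeeping, namely checking that the chain of canonical maps
\[
(\mathbb{R}f_{\rig}E_0)^{\an} \to \mathbb{R}u_*(E_0 \otimes \Omega^{\bullet}_{V_0/O_0})^{\an} \to \mathbb{R}u_*(E' \otimes \Omega^{\bullet}_{V/O}) \to \mathbb{R}]f[_*(E_{(S_k,S_K)} \otimes i_X^{-1}\Omega^{\bullet}_{V/O})
\]
used in that construction specializes, upon applying $H^i$ and $i_{S_k}^{-1}$ at the point $(\Spec k,\MM(K))$, to the map in the statement; once this is unwound, the isomorphism follows formally from Theorem \ref{T:cohomology}.
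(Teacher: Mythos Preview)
Your proposal is correct and matches the paper's approach: the paper presents this theorem explicitly as a corollary of Theorem \ref{T:cohomology} and gives no separate proof, so your specialization to $S = \Spf \calV$ with $(S_k,S_K) = (\Spec k, \MM(K))$ and subsequent unwinding is exactly the intended argument.
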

%%%%%%%%%%%%%%%%%%%%%%%%%%%%%%%%%%

%%%%%%%%%%%%%%%%%%%%%%%%%%%%%%%%%%%%%%%%%%%%%%%%%%%%%%%%%%%%%%%%%%%%%%%%%%
\section{Excision on topoi}
\label{S:topoiReview}
%%%%%%%%%%%%%%%%%%%%%%%%%%%%%%%%%%%%%%%%%%%%%%%%%%%%%%%%%%%%%%%%%%%%%%%%%%

In this section we exposit a small piece of \cite{SGA4:I}.

%-------------------------------------------------------------------------
\subsection{Topoi}
%-------------------------------------------------------------------------

Here we recall definitions and basic facts about categories, presheaves, sheaves, sites, topoi, localization, fibered categories, and 2-categories. We refer to \cite{stacks-project} (and its prodigious index and table of contents) for any omitted details and a more leisurely and complete discussion of these concepts, and in particular follow their convention that a left exact functor is defined to be a functor that commutes with finite limits and a right exact functor is a functor that commutes with colimits (see \cite{stacks-project}*{\href{http://math.columbia.edu/algebraic_geometry/stacks-git/locate.php?tag=0034}{0034}}).
\\

\labelpar{}
\label{p:initial}
Let $C$ be a category. We denote by $\widehat{C}$ the category $\Fun(C^{\op},\Sets)$  of \defi{presheaves} on $C$. We denote by $h\colon C \to \widehat{C}$ the Yoneda embedding which sends an object $X$ of $C$ to the presheaf $h_X := \Hom(-,X)$. We say that a presheaf $F \in \widehat{C}$ is \defi{representable} if there exists an $X \in C$ and an isomorphism $h_X \to F$, and we say that $F$ is \defi{representable by} $X$ if $F$ is isomorphic to $h_X$. The functor $h$ is fully faithful, and so when there is no confusion we will consider $C$ as a full subcategory of $\widehat{C}$; i.e., we will identify $h_X$ with the object $X$ that it represents.

Similarly, we say an object $X \in C$ \defi{corepresents} a covariant functor $F\colon C \to \Sets$ if $F$ is isomorphic to the functor $Y \mapsto \Hom(X,Y)$.
\\

\labelpar{}
\label{p:localization} 
Let $X \in C$ be an object. We define the \defi{localized} (or `comma') category $C_{/X}$ to be the category of maps $Y \to X$ whose morphisms are commuting diagrams 
\[
\xymatrix{
Y\ar[rr]\ar[dr] && Y'\ar[dl]\\
&X&
}.
\]
There is a projection functor $j_X\colon C_{/X} \to C$ which we denote by $j$ when the context is clear. 

\labelpar{} Let $\xymatrix{ C  \ar@<+.5ex>[r]^<>(.5){L}  & D \ar@<+.5ex>[l]^<>(.5){R} }$ be a pair of functors between categories $C$ and $D$. We say that $L$ is \defi{left adjoint} to $R$ (or equivalently that $R$ is \defi{right adjoint} to $L$) if there is a natural isomorphism 
\[
\Hom(L(-),-) \cong \Hom(-,R(-))
\]
of bifunctors. The natural transformation $\id\colon L \to L$ (resp. $\id\colon R \to R$) induces (via the adjunction) a functor $\id_C \to R\circ L$ (resp. $L \circ R \to \id_D$) called the \defi{unit} (resp. \defi{counit}) of adjunction.

%%%%%%%%%%%%%%%%%%%%%%%%%%%%%%%%%%%%%%%%%%%%%%%%%%%
\begin{lemma}
\label{L:unitFull}
%%%%%%%%%%%%%%%%%

The functor $L$ (resp. $R$) is fully faithful if and only if the unit (resp. counit) of adjunction is an isomorhpism.

%%%%%%%%%%%%%%%%%
\end{lemma}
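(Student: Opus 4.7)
The plan is to reduce the statement to a standard Yoneda argument by unraveling the naturality of the adjunction. I will only write the argument for the unit version; the counit version is formally dual.

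First I would fix objects $X, X' \in C$ and consider the composition
\[
\Hom_C(X, X') \xrightarrow{L} \Hom_D(LX, LX') \xrightarrow{\sim} \Hom_C(X, RLX'),
\]
where the second arrow is the adjunction bijection. A direct chase of the definitions (writing the adjunction bijection as $g \mapsto R(g) \circ \eta_X$ and using naturality of $\eta$ applied to a morphism $f\colon X \to X'$) shows that this composition is precisely post-composition with the unit $\eta_{X'}\colon X' \to RLX'$, i.e.\ the map $f \mapsto \eta_{X'} \circ f$. Since the second arrow is a bijection, $L$ is fully faithful if and only if the map $f \mapsto \eta_{X'}\circ f$ is a bijection $\Hom_C(X,X') \to \Hom_C(X,RLX')$ for every $X, X' \in C$.

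Next I would invoke the Yoneda lemma: the natural transformation of representable functors $h_{X'} \to h_{RLX'}$ induced by $\eta_{X'}$ is an isomorphism if and only if $\eta_{X'}$ is an isomorphism in $C$. Combining this with the previous paragraph, $L$ is fully faithful if and only if $\eta_{X'}$ is an isomorphism for every $X' \in C$, which is exactly the assertion that $\eta\colon \id_C \to R\circ L$ is a natural isomorphism.

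Finally, the statement for $R$ follows by applying the argument just given to the opposite adjunction: if $L \dashv R$ is an adjunction between $C$ and $D$, then $R^{\op} \dashv L^{\op}$ is an adjunction between $D^{\op}$ and $C^{\op}$ whose unit is the opposite of the counit $\varepsilon$ of the original adjunction. Fully faithfulness of $R$ is equivalent to fully faithfulness of $R^{\op}$, so the unit case applied to $R^{\op} \dashv L^{\op}$ gives that $R$ is fully faithful iff $\varepsilon$ is an isomorphism. I do not anticipate any serious obstacle; the only care needed is to verify that the composition really equals $\eta_{X'} \circ -$, which is a one-line naturality diagram.
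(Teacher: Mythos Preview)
Your proof is correct and is essentially the same as the paper's. The paper writes the composition in the other order,
\[
\Hom(X,Y) \xrightarrow{\ \eta_{Y}\circ -\ } \Hom(X,R(L(Y))) \xrightarrow{\ \sim\ } \Hom(L(X),L(Y)),
\]
and observes that this composite is the map induced by $L$; but this is just your factorization read backwards (via the triangle identity), and the conclusion via Yoneda is identical. The paper handles the $R$/counit statement by invoking the co-Yoneda lemma directly rather than passing to opposite categories, which amounts to the same thing.
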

%%%%%%%%%%%%%%%%%%%%%%%%%%%%%%%%%%

%%%%%%%%%%%%%%%%%%%%%%%%%%%%%%%%%%%%%%%%%%%%%%%%%%%
\begin{proof}
%%%%%%%%%%%%%%%%%

Let $Y \in C$. By adjunction, for any $X \in C$, the second morphism of the composition
\[
\Hom(X,Y) \to \Hom(X,R(L(Y))) \to \Hom(L(X),L(Y)).
\]
is an isomorphism. By definition the composition is an isomorphism for all $Y$ if and only if $L$ is fully faithful, and by Yoneda's lemma, the first map is an isomorphism for all $Y$ if and only if the unit of adjunction is an isomorphism. The second claim is proved in the same way using the co-Yoneda lemma.

%%%%%%%%%%%%%%%%%
\end{proof}
%%%%%%%%%%%%%%%%%%%%%%%%%%%%%%%%%%

\labelpar{}
\label{p:adjointTriple}
Let $C$ and $D$ be categories, and let $u\colon C \to D$ be a functor. Then from $u$ we can construct a triple $\widehat{u}_!,\widehat{u}^*,\widehat{u}_*$ of functors 
 \[
  \xymatrix@C=2cm@M=.25cm{
	\widehat{C}
        \ar@<7pt>[r]^{\widehat{u}_!}\ar@<-7pt>[r]_{\widehat{u}_*} &
	\widehat{D}
		\ar[l]|{\widehat{u}^{*}} 
  }
  \]
with each left adjoint to the functor directly below. The functor $\widehat{u}^*$ is the easiest to define, and sends a presheaf $G \in \widehat{D}$ to the presheaf $\widehat{u}^*G := G \circ u$ on $C$ (i.e., the presheaf $X \mapsto F(u(X))$. To construct a left adjoint $\widehat{u}_!$ one first observes that for $X \in C$ one is forced by the adjunction 
\[
\Hom(\widehat{u}_!\, h_X,F) = \Hom(h_X,\widehat{u}^*F) = (\widehat{u}^* F)(X) = F(u(X))
\]
to define $\widehat{u}_!(h_X) = h_{u(X)}$. Every sheaf $F \in C$ is isomorphic to a colimit of representable sheaves via the natural map $\colim_{h_X \to F} h_X \to F$, where the colimit is taken over the comma category $C_{/F}$ whose objects are maps $h_X \to F$ and whose morphisms are commuting diagrams of maps. One's hand is again forced -- since a functor with a right adjoint is right exact, $\widehat{u}_!$ should commute with colimits and we are forced to define $\widehat{u}_!F$ as $\colim_{h_X \to F} h_{u(X)}$. Alternatively, a rearrangement gives the usual formula (see for instance \cite{stacks-project}*{\href{http://math.columbia.edu/algebraic_geometry/stacks-git/locate.php?tag=00VD}{00VD}})
\[
Y \mapsto \colim_{h_X \to F} (h_{u(X)}(Y)) \cong \colim_{h_X \to F} \colim_{|h_{u(X)}(Y)|} * \cong
\]
\[
 \colim_{X \in (I^Y_u)^{\op}}\colim_{|h_X \to F|}*  \cong \colim_{X \in (I^Y_u)^{\op}} F(X),
\]
where $* = \{\emptyset\}$, $|C|$ denotes the underlying set of a category $C$, $I^Y_u$ is the category whose objects are pairs $(X, Y \to u(X))$ and whose morphisms are morphisms $X \to X'$ which make the diagram
\[
\xymatrix{
&Y\ar[rd]\ar[ld]&\\
u(X)\ar[rr]&&u(X')
}
\]
commute, and the colimit is taken in the category of sets. Later it will be important to observe that when $F(X)$ has extra algebraic structure (e.g., $F$ is a sheaf of abelian groups), we can take this colimit in a different category and construct a different left adjoint $\widehat{u}_!$. If the category $(I^Y_u)^{\op}$ is directed then $\widehat{u}_!$ is exact, but this does not hold in general. By construction it is left adjoint to $\widehat{u}^*$.

The functor $\widehat{u}_*$ is easier to construct -- by adjunction we can define for $Y \in D$ and $F \in \widehat{C}$  value of the presheaf $\widehat{u}_*F$ on $X$ as
\[
(\widehat{u}_*F)(Y) = \Hom(h_Y,\widehat{u}_* F) = \Hom(\widehat{u}^* \, h_Y, F);
\]
and writing $\widehat{u}^*\, h_Y$ as a colimit of representable presheaves we deduce a description of $\widehat{u}_*F$ as the presheaf $Y \mapsto \lim_{u(X) \mapsto Y} F(X)$. Any functor with a left (resp. right) adjoint commutes with arbitrary limits (resp. colimits) when the limits exist \cite{stacks-project}*{\href{http://math.columbia.edu/algebraic_geometry/stacks-git/locate.php?tag=0038}{0038}}. Thus, $\widehat{u}_*$  commutes with limits, and  $\widehat{u}^*$ commutes with both limits and colimits.
\\

%%%%%%%%%%%%%%%%%%%%%%%%%%%%%%%%%%%%%%%%%%%%%%%%%%%
\begin{example} 
\label{E:topologicalExtensionByZero}
%%%%%%%%%%%%%%%%%

Let $X$ be a topological space, let $\Open X$ be the category of open subsets of $X$, and consider the inclusion $i\colon \Open U \hookrightarrow \Open X$ induced by the open inclusion of topological spaces $U \subset X$. Then the morphisms $\widehat{i}^*$ and $\widehat{i}_*$ are the usual morphisms (induced by the alternative functor $\Open X \to \Open U$ given by intersection), and $\widehat{i}_!$ is the `extension by the empty set' functor, so that $\widehat{i}_!F$ is given by 
\[
U' \mapsto \begin{cases} F(U') & \text{if $U' \subset U $,}
\\
\emptyset &\text{if $U' \not \subset U$.}
\end{cases}
\]
Finally, we note that for any category $C$, the category $\widehat{C}$ has a final object $\e_{\widehat{C}}$ given by the presheaf $X \mapsto \{\emptyset\}$; this is also a limit of the empty diagram. Since left exact functor $\widehat{C} \to E$ with $E$ a category must send $\e_{\widehat{C}}$ to a final object of $E$, we conclude that the functor $\widehat{i}_!$ is not left exact.

%%%%%%%%%%%%%%%%%
\end{example}
%%%%%%%%%%%%%%%%%%%%%%%%%%%%%%%%%%

\labelpar{}
\label{p:presheafLimits}
Let $I$ and $D$ be categories. For $Y \in D$, define $F_Y\colon I \to D$ to be the constant functor $i \mapsto Y$. Let $F\colon I \to D$ be a functor. We say that $X$ is a \defi{limit} of $F$ if $X$ represents the functor $Y \mapsto \Hom(F_Y,F)$, and we say that $X$ is a \defi{colimit} of $F$ if it corepresents the functor $Y \mapsto \Hom(F,F_Y)$. We will often refer to $F$ as a diagram.

When $D$ is the category of sets, limits and colimits exist. It follows that when $D$ is the category $\widehat{C}$ of presheaves on a category $C$, limits and colimits exist -- indeed, the limit (resp. colimit) of a diagram $F\colon I \to \widehat{C}$ of presheaves is the presheaf sending $X$ in $C$ to the limit (resp. colimit) of the diagram $\text{ev}_X \circ F$ (i.e., the functor given by $i \mapsto I(i)(X)$).
\\

\labelpar{}
\label{p:presheafUnions}
In particular, let $I$ be a category whose only morphisms are the identity morphisms, and let $\{X_i\}_{i \in I}$ be a collection of objects of $\widehat{C}$. Then the colimit of the diagram $i \mapsto X_i$, which we call the \defi{disjoint union} of $\{X_i\}$ and denote by $\coprod_{i \in I} X_i$, exists in $\widehat{C}$. Moreover, coproducts commute with localization; i.e., if we define $\coprod C_{/X_i}$ to be the 2-categorical (see \ref{R:2-fiberProducts}) fiber product $I\times_C \Mor C$ via the map $F\colon I \to C$, then the natural map 
\[
 \coprod C_{/X_i}  \to C_{/\coprod X_i},
\]
is an equivalence of categories.
\\

\labelpar{}
Let $X \in C$ be an object of a category $C$ and consider the projection morphism $j_X\colon C_{/X} \to C$ (see \ref{p:localization}). One can make the triple of adjoint functors of \ref{p:adjointTriple} more explicit as follows. The collection of maps $Y \to X$ is cofinal in $(I^Y_j)^{\op}$, and so the functor $\widehat{j}_!$ may be concisely described as sending a presheaf $F \in \widehat{C_{/X}}$ to the presheaf 
\begin{equation}
\label{eq:extensionFormula} 
 \widehat{j}_!F\colon Y \mapsto \coprod_{Y \to X}F(Y \to X).
\end{equation}
Alternatively, the presheaf category $\widehat{C_{/X}}$ is canonically isomorphic to the localization $\widehat{C}_{/h_X}$ via the map $\widehat{C}_{/h_X} \to \widehat{C_{/X}}$ which sends $F \to h_X$ to the presheaf $(Y \to X) \mapsto \Hom_{h_X}(h_Y,F)$; the inverse map is $F \mapsto (\widehat{j}_!F \to h_X)$ ($\widehat{C_{/X}}$ has a final object represented by $\id\colon X \to X$, and the map to $h_X$ is $\widehat{j}_!$ of the map from $F$ to the final object). Via this identification the functor $\widehat{j}_!$ simply sends a presheaf $F \to h_X$ to $F$, and the map $u^*F$ sends a presheaf $F \in \widehat{C}$ to the product $h_X \times F \to h_X$ (where the map is the first projection).

For $F \in \widehat{C}$, we define the localization $C_{/F}$ similarly, by the formula $C_{/F} := C \times_{\widehat{C}}\left(\widehat{C}_{/F}\right)$.
\\

\labelpar{}
Let $u\colon C \to D$ be a functor. We say that an arrow $Y \to X$ of $C$ is \defi{cartesian} if for any $\psi:Z\to X$ and for any $h\colon u(Z)\to u(Y)$
   such that $u(\psi)=u(\phi)\circ h$, there exists a unique $\theta\colon Z\to Y$ so
   that $\psi=\phi\circ \theta$.
   \[\xymatrix@R-1.5pc{
    Z \ar@{|->}[dd]\ar@{-->}[dr]_{\exists! \theta} \ar@/^/[rrd]^{\forall \psi}\\
     & Y\ar@{|->}[dd] \ar[r]_\phi & X \ar@{|->}[dd]\\
    u(Z) \ar[dr]_{\forall h} \ar@/^/[rrd]^(.65){u(\psi)}|!{[ru];[rd]}\hole \\
    & u(Y) \ar[r] & u(X)
   }\]
and we say that $u$ (or when the base $D$ is clear, `$C$') is a \defi{fibered category} or a \defi{category fibered over} $D$ if for every $X \in C$ and every arrow $Y \to u(X)$ in $D$, there exists a cartesian arrow over $Y \to u(X)$.

For $X \in D$ we define the \defi{fiber over} $X$ to be the category $C(X) := u^{-1}(\id\colon X \to X)$ of all objects of $C$ which map to $X$ with morphisms which map to the identity $\id\colon X \to X$ under $u$. If for every $X$, the category $C(X)$ is a groupoid (i.e., a category such that every arrow is an isomorphism), then we call $C$ a \defi{category fibered in groupoids over} $D$. In this case every arrow of $C$ is cartesian.
\\

%%%%%%%%%%%%%%%%%%%%%%%%%%%%%%%%%%%%%%%%%%%%%%%%%%%
\begin{example} 
\label{E:fiberedExamples}
%%%%%%%%%%%%%%%%%

Let $C$ be a category and let $F \in \widehat{C}$ be a presheaf. Then the comma category $j\colon C_{/F} \to C$ is a category fibered in groupoids; in fact it is fibered in sets (i.e., categories such that every arrow is the identity), and any category fibered in sets over a category $C$ is equivalent (but not necessarily isomorphic) to a fibered category $C_{/F}$ for some $F \in \widehat{C}$.

Let $C$ be a category with fiber products. Another example of a fibered category is the codomain fibration $\Mor C \to C$: objects of $\Mor C$ are morphisms of $C$ and arrows are commutative diagrams, and the map $t\colon\Mor C \to C$  sends an arrow $Y \to X$ to its target $X$. Then for $X \in C$, the comma category is equal to $(\Mor C)(X)$.

%%%%%%%%%%%%%%%%%
\end{example}
%%%%%%%%%%%%%%%%%%%%%%%%%%%%%%%%%%

\labelpar{}
Categories fibered over $C$ form a 2-category, i.e., a category enriched over categories (so that $\Hom(X,Y)$ is not just a set, but a category). An element of $\Mor (\Hom(X,Y))$ is called a 2-morphism. Let $X,Y$ be two categories fibered over $C$. A \defi{morphism} of categories fibered over C is a functor $F\colon X \to Y$ such that the diagram 
\[
\xymatrix{
X \ar[rr]\ar[dr] &&Y\ar[dl]\\
&C&
}
\]
commutes and $F$ takes cartesian arrows to cartesian arrows (if $X$ and $Y$ are fibered in groupoids, then every arrow is cartesian, so this last condition is automatic). A 2-morphism between morphisms $F,R\colon X\to Y$ is a natural transformation $t\colon F \to R$ such that for every $x \in X$, the induced map $t_x\colon F(x) \to R(x)$ in $Y$ projects to the identity morphism in $C$. One can check that when $X$ and $Y$ are fibered in groupoids, any 2-morphism is actually an isomorphism.

\begin{remark}
\label{R:2-fiberProducts}
  A main point of the use of the formalism of 2-categories is that equivalence of categories is not respected by fiber products of categories. Instead one considers 2-categorical fiber products, defined as in \cite{stacks-project}*{\href{http://math.columbia.edu/algebraic_geometry/stacks-git/locate.php?tag=02X9}{02X9}}).
\end{remark}

\labelpar{}
\label{p:topology}
Let $C$ be a category. We define a \defi{pretopology} (often called a Grothendieck Topology) on $C$ to be a set $\Cov C$ of families of morphisms (which we call the coverings of $C$) such that each element of $\Cov C$ is a collection $\{X_i \to X\}_{i \in I}$ of morphisms of $C$ with a fixed target satisfying the usual axioms (see \cite{stacks-project}*{\href{http://math.columbia.edu/algebraic_geometry/stacks-git/locate.php?tag=00VH}{00VH}}): 
\begin{itemize}
\item[(i)] For every isomorphism $X \cong X'$, $\{X \cong X'\} \in \Cov C$;
 
\item[(ii)] Refinements of a covering by coverings form a covering;

\item[(iii)] For every $\{X_i \to X\}_{i \in I} \in \Cov C$ and every $Y \to X$, each of the fiber products $X_i \times_X Y$ exists and $\{X_i \times_X Y \to Y\}_{i \in I} \in \Cov C$.
\end{itemize}
We call a category $C$ with a pretopology $\Cov C$ a \defi{site}. This generates a \defi{topology} on $C$ in the sense of \cite{stacks-project}*{\href{http://math.columbia.edu/algebraic_geometry/stacks-git/locate.php?tag=00Z4}{Definition 00Z4}}.
\\

\labelpar{}
\label{p:sheaf}
Let $C$ be a site whose topology is defined by a pretopology and let $F \in \widehat{C}$ be a presheaf. We say $F$ is a \defi{sheaf} if for every covering $\{X_i \to X\}_{i \in I} \in \Cov C$ the diagram
\[
\xymatrix{
F(X) \ar[r]
&
\prod\nolimits_{i\in I}
F(X_i)
\ar@<1ex>[r]^-{\text{pr}_0^*} \ar@<-1ex>[r]_-{\text{pr}_1^*}
&
\prod\nolimits_{(i_0, i_1) \in I\times I}
F(X_{i_0}\times_X X_{i_1})
}
\]
is exact (i.e., the first arrow equalizes the rest of the diagram). We denote by $\widetilde{C}$ the category of sheaves on $C$.

The inclusion functor $i\colon \widetilde{C} \hookrightarrow \widehat{C}$ has a left adjoint, `sheafification', which we denote by $-^a$. In particular, the inclusion $i$ commutes with limits (but not colimits!), so that the limit of a diagram of sheaves in the category of sheaves agrees with the limit considered in the category of presheaves (i.e., limits do not require sheafification in $\widetilde{C}$). We conclude that $\widetilde{C}$ has a final object $\e_{\widetilde{C}}$, which is the limit of the empty diagram and given (as in the case of presheaves) by the sheaf $X \mapsto \{\emptyset\}$. 
\\

\labelpar{}
\label{p:sheafLimits}
Limits exist in $\widetilde{C}$ -- indeed, given a diagram $F\colon I \to \widetilde{C}$, the limit of the diagram $i \circ F\colon I \to \widehat{C}$ is a sheaf and thus the limit of the diagram $F$. Colimits in $\widetilde{C}$ also exist -- the colimit of a diagram $F$ is the \emph{sheafication} of the diagram $i \circ F$ (an example where sheafification is required is a disjoint union of topological spaces). 
\\

\labelpar{}
A \defi{topos} is a category equivalent to the category $\widetilde{C}$ of sheaves on a site $C$. A morphism $f\colon T' \to T$ of topoi is a pair $(f^*\colon T \to T', f_*\colon T' \to T)$ of functors such that $f^*$ is exact and left adjoint to $f_*$.
\\

\labelpar{}
Let $C$ and $D$ be sites and let $u \colon C \to D$ be a functor. Then the functors $\widehat{u}_!, \widehat{u}^*,$ and $\widehat{u}_*$ do not necessarily restrict to maps between $\widetilde{C}$ and $\widetilde{D}$ (i.e., they do not necessarily send sheaves to presheaves), and if we sheafify then they may no longer be adjoint. This motivates the following  definitions. 

We say that $u$ is \defi{continuous} if $\widehat{u}^*$ of a sheaf is a sheaf, and in this case we denote the induced map $\widetilde{D} \to \widetilde{C}$ by $u^*$. If the topology on $C$ is defined by a pretopology and $u$ commutes with fiber products, then by \cite{stacks-project}*{\href{http://math.columbia.edu/algebraic_geometry/stacks-git/locate.php?tag=00WW}{00WW}}, $u$ is continuous if and only if it sends coverings of $C$ to coverings of $D$. Note that we generally do not expect that $u$ commutes with \emph{arbitrary} finite limits -- consider for example an object $X \in C$ and the projection morphism $C_{/X} \to C$. If in addition $\widehat{u}_!$ is exact, we then say that $u$ is a \defi{morphism of sites}; setting $u_! = (\widehat{u}_!)^a$ it follows that the pair $(u_!, u^*)\colon \widetilde{C} \to \widetilde{D}$ is a morphism of topoi.

Alternatively, we say that a functor $u\colon C \to D$ is \defi{cocontinuous} if $\widehat{u}_*$ sends sheaves to sheaves, and in this case we denote the induced map $\widetilde{C} \to \widetilde{D}$ by $u_*$. The pair $(u^*, u_*) \colon \widetilde{C} \to \widetilde{D}$ is then a morphism of topoi, where $u^*$ is the sheafification $(\widehat{u}^*)^a$. If the topology on $D$ is defined by a pretopology, then by \cite{stacks-project}*{\href{http://math.columbia.edu/algebraic_geometry/stacks-git/locate.php?tag=00XK}{00XK}} $u$ is cocontinuous if and only if for every $X \in C$ and every covering $\{Y_j \to u(X)\}_{j \in J}$ of $u(X)$ in $D$ there exists a covering $\{X_i \to X\}_{i\in I}$ in $C$ such that the family of maps $\{u(X_i) \to u(X)\}_{i \in I}$ refines the covering $\{Y_j \to u(X)\}_{j \in J}$, in that the collection $\{u(X_i) \to u(X)\}_{i \in I}$ is a covering of $u(X)$ and that there is a map $\phi\colon I \to J$ such that for each $i$ there exists a factorization $u(X_i) \to Y_{\phi(i)} \to u(X)$ (note that we do not require the collections $\{u(X_i) \to Y_{j}\}_{\phi(i) = j}$ to be coverings).

The nicest situation is when $u \colon C \to D$ is both continuous and cocontinuous -- the induced morphism $(u^*,u_*)\colon \widetilde{C} \to \widetilde{D}$ requires no sheafication and $u^*$ has a left adjoint.
\\

\labelpar{}
\label{p:inducedTopology}
Let $u\colon C \to D$ be a functor, and suppose that $D$ is a site. We define the \defi{induced topology} on $C$ to be the largest topology making the map $u$ continuous. When $u$ commutes with fiber products and the topology on $D$ is defined by a pretopology, then the induced topology on $C$ is generated by the following pretopology: a collection $\{V_i \to V\}$ in $C$ is a covering if $\{u(V_i) \to u(V)\}$ is a covering in $D$. 

Now suppose instead that $C$ is a site. We define the \defi{image topology} on $D$ to be the smallest topology making the map $u$ continuous. When $u$ commutes with fiber products, the topology on $C$ is defined by a pretopology, then the image topology on $D$ is generated by the following pretopology: for every covering $\{V_i \to V\}$ in $C$, the collectin $\{u(V_i) \to u(V)\}$ is a covering in $D$.

%%%%%%%%%%%%%%%%%%%%%%%%%%%%%%%%%%%%%%%%%%%%%%%%%%%
\begin{example}
\label{E:cocontinuous}
%%%%%%%%%%%%%%%%%

 Our main example of a cocontinuous functor is the following. Let $D$ be a site and let $u\colon C \to D$ be a fibered category such that every arrow of $C$ is cartesian, and endow $C$ with the induced topology \ref{p:inducedTopology}. Assume further that finite limits exist in $C$ and $D$ and that the topology on $D$ is defined by a pretopology. Since $u$ is fibered in groupoids, it is an easy exercise to check  that $u$ commutes with fiber products. Then it follows immediately from the definitions (using that $u$ is a fibered category) that $u$ is cocontinuous, and we get a triple of adjoints
 \[
  \xymatrix@C=2cm@M=.25cm{
	\widetilde{C}
		\ar@<7pt>[r]^{{u}_{!}}\ar@<-7pt>[r]_{{u}_*} &
	\widetilde{D}
		\ar[l]|{{u}^{*}}.
  }
  \]
We will mainly apply this when $C \cong D_{/X}$ for some $X \in D$.

%%%%%%%%%%%%%%%%%
\end{example}
%%%%%%%%%%%%%%%%%%%%%%%%%%%%%%%%%%

\labelpar{p:canonicalTopology}
Let $C$ be a category. We define the \defi{canonical topology} on $C$ to be the largest topology such that representable objects are sheaves (i.e., the largest topology  such that for all $x \in C$, the presheaf $h_X$ is a sheaf). We say that a topology is \defi{subcanonical} if it is smaller than the canonical topology (in other words, for all $x \in C$, the presheaf $h_X$ is a sheaf). 

%%%%%%%%%%%%%%%%%%%%%%%%%%%%%%%%%%%%%%%%%%%%%%%%%%%
\begin{example}
\label{E:canonicalExamples}
%%%%%%%%%%%%%%%%%

(i) The topology on the category of affine schemes given by jointly surjective families of flat (but not necessarily finitely presented) morphisms is subcanonical \cite{Knutson:algebraicSpaces}*{3.1, 7'}, and the fpqc topology is subcanonical on the category of schemes \cite{Vistoli:fiberedCategories}*{Theorem 2.55} (note that the flat topology is \emph{not} subcanonical for the category of schemes).

(ii) For a site $C$ the canonical topology on $\widetilde{C}$ is given by collections $\{F_i \to F\}$ such that the map $\coprod F_i \to F$ is a surjection of sheaves \cite{stacks-project}*{\href{http://math.columbia.edu/algebraic_geometry/stacks-git/locate.php?tag=03A1}{03A1}}. The natural map $\widetilde{C} \to \widetilde{\widetilde{C}}$ is then an equivalence of categories. Thus, any topos $T$ is canonically a site.

%%%%%%%%%%%%%%%%%
\end{example}
%%%%%%%%%%%%%%%%%%%%%%%%%%%%%%%%%%

\labelpar{}
For a topos $T$ we denote by $\Ab T$ the category of abelian group objects of $T$. 
If we view $T$ as a site with its subcanonical topology, then  $\Ab T$ is equivalent to the category of sheaves of abelian groups on $T$, and when we choose a site $C$ such that $T$ is equivalent to $\widetilde{C}$, we may write $\Ab C$ instead of $\Ab T$.  By \cite{stacks-project}*{\href{http://math.columbia.edu/algebraic_geometry/stacks-git/locate.php?tag=00YT}{00YT}}, a morphism of $f\colon T' \to T$ topoi restricts to a pair
 \[
  \xymatrix@C=1cm@M=.115cm{
	\Ab T'
		\ar@<-4pt>[r]_{f_*} &
	\Ab T
		\ar@<-4pt>[l]_{f^*}
  }
  \]
of adjoint functors; here the exactness of $f^*$  in the definition of a morphism of topoi is crucial (consider for example that the functor $u_!\colon \widetilde{C_{/X}} \to \widetilde{C}$ described above in \ref{E:topologicalExtensionByZero} is not generally exact and indeed fails to send abelian sheaves to abelian sheaves).
\\

\labelpar{}
\label{p:abelianExtensionZero}
Let $u\colon C \to D$ be a morphism of sites. Then $u_!$ does not necessarily take abelian sheaves to abelian sheaves.  Indeed, consider the case of a localization morphism $j\colon C_{/X} \to C$ (with $X \in C$). Then for any $X' \in C$ such that $\Hom(X',X)$ is empty, $(j_!F)(X')$ is also empty for any abelian sheaf $F \in \Ab \widetilde{C_{/X}}$. It is nonetheless true that $u^*\colon \Ab \widetilde{D} \to \Ab \widetilde{C}$  has a left adjoint $u_!^{\ab}$. We will construct $u_!^{\ab}$ in the next few paragraphs by adapting the construction of $u_!$.

As a first step we consider a category $C$ and construct a left adjoint $\mathbb{Z}^{\text{ps}}_{-}\colon C \to \Ab C$ to the forgetful functor $\Ab C \to C$. Let $F \in \widehat{C}$ be a presheaf of sets. We define the \defi{free abelian presheaf} on $F$ to be the presheaf $X \mapsto \bigoplus_{s \in F(X)} \mathbb{Z}$. It follows directly from this explicit formula that this is the desired left adjoint and, moreover, that the functor $F \mapsto \mathbb{Z}^{\text{ps}}_F$ commutes with limits; since it has a right adjoint it also commutes with colimits and is thus exact. When $F = h_X$ for some $X \in C$, we will instead write $\mathbb{Z}^{\text{ps}}_X$.

Now, suppose that $C$ is a site. Since sheafification is left adjoint to the inclusion $\widetilde{C} \hookrightarrow \widehat{C}$, the functor $\mathbb{Z}_{-}\colon \widetilde{C} \to \Ab \widetilde{C}$ given by $F \mapsto (\mathbb{Z}_F)^{a}$ is left adjoint to $\Ab \widetilde{C} \to \widetilde{C}$. Furthermore, since sheafification is exact, the functor $\mathbb{Z}_{-}$ also commutes with limits and colimits. When $F = (h_X)^{a}$ for some $X \in C$, we will instead write $\mathbb{Z}_X$.

Now we can construct $u_!^{\ab}$ as following the template of \ref{p:adjointTriple}. Let $A \in \Ab C$ be a sheaf of abelian groups and let $U \in C$. Then since
\[
A(U) = \Hom_{\widetilde{C}}(h_U, A) = \Hom_{\Ab \widetilde{C}}(\mathbb{Z}_U,A),
\]
and since $u_!^{\ab}$ commutes with colimits, it follows that 
\[
u_!^{\ab}A = 
u_!^{\ab}\colim_{(h_U \to A) \in \widetilde{C}_{/A}} h_{h_U \to A} =   
\colim_{(h_U \to A) \in \widetilde{C}_{/A}}  \mathbb{Z}_{h_U \to A}.
\]
As in the case of $u_!$ for sheaves of sets, by adjunction we must have $u_!^{\ab} \mathbb{Z}_{h_U \to A} = \mathbb{Z}_U$, and since $u_!^{\ab}$ must commute with colimits we get the formula 
\[
u_!^{\ab} A =  \colim_{(h_u \to A) \in \widetilde{C}_{/A}} \mathbb{Z}_{U}.
\]
As before (see Equation \ref{eq:extensionFormula}) we get a nice formula when $u = j_X\colon C_{/X} \to C$ is the projection morphism associated to some object $X$ of a site $C$ (see \ref{p:localization}); $u_!^{\ab}A$ is the sheafication of the presheaf
\begin{equation}
\label{eq:j_!}
 \widehat{j}_!A\colon Y \mapsto \bigoplus_{Y \to X}A(Y \to X).
\end{equation}
In this special case it follows from this explicit formula that $u_!^{\ab}$ left exact; moreover it commutes with colimits since it has a right adjoint $u^*$. Consequently, by an easy exercise we get the useful bonus that $u^*$ takes injective abelian sheaves to injective abelian sheaves.

Note that this disagrees with the functor `extension by the empty set' $u_!$; nonetheless when there is no confusion we will write $u_!^{\ab}$ as $u_!$ (and if there is confusion we will refer to them by $u_!^{\ab}$ and $u_!^{\set}$).
\\

\labelpar{}
A \defi{ringed topos} is a pair $(T,\calO_T)$ with $T$ a topos and $\calO_T$ a ring object of $T$. Equivalently, $\calO_T$ is a sheaf of rings on $T$, where we consider $T$ as a site with its canonical topology  (see Definition \ref{p:canonicalTopology}). A morphism $f\colon (T', \calO_{T'}) \to (T,\calO_T)$ of ringed topoi is a morphism $f\colon T' \to T$ of topoi and a map $\calO_T \to f_*\calO_{T'}$. Sometimes we will write $(f^*,f_*)$ instead of $f$. Similarly, a \defi{ringed site} is a site $C$ together with a ring object $\calO_C$ of its topos $\widetilde{C}$, and a morphism $(C', \calO_{C'}) \to (C,\calO_C)$ of ringed sites is a continuous morphism $f\colon C' \to C$ of sites and a map $\calO_C \to f_*\calO_{C'}$.
\\

\labelpar{}
Let $(T,\calO_T)$ be a ringed topos. Then we can consider the category $\Mod \calO_T$ of $\calO_T$-modules (i.e., the category of abelian group objects of $T$ which admit the structure of a module object over the ring object $\calO_T$ of $T$). Considering $T$ with its canonical topology, an $\calO_T$-module is the same as a sheaf of $\calO_T$-modules.

We say that $M \in \Mod \calO_T$ is \defi{quasi-coherent} (resp. \defi{locally finitely presented}) if there exists a covering $F \to \e_T$ of the final object $e_T$ 
of $T$ (i.e., a covering in the canonical topology on $T$) such that, denoting by $j\colon T_{/F} \to T$ the localization with respect to $F$ and setting $\calO_{T_{/F}} = j^*\calO_T$, the pullback $j^*M$ admits a presentation (resp. finite presentation) -- i.e., $j^*M$ is the cokernel of a map $\bigoplus_I \calO_{T_{/F}} \to \bigoplus_J \calO_{T_{/F}}$ of $\calO_{T_{/F}}$-modules (resp. a map with $I$ and $J$ finite sets). If $C$ is a site such that $T$ is equivalent to $\widetilde{C}$ (which may have no final object) and the topology on $C$ is defined by a pretopology, then it is equivalent to ask that for all $X \in C$, there exists a covering $\{X_i \to X\}$ such that for each $i$ there exists a presentation (resp. finite presentation) of the restriction of $M$ to $T_{(h_{X_i})^a}$. We denote by $\QCoh \calO_T$ (resp. $\Mod_{\fp} \calO_T$) the subcategories of quasi-coherent (resp. locally finitely presented) $\calO_T$-modules.
\\

\labelpar{}
\label{p:modFunctors}
Let $(T,\calO)$ be a ringed topos, with $T = \widetilde{C}$ for some site $C$. As in the abelian case, the forgetful functor $\Mod \calO \to T$ has a left adjoint $\calO_{-}\colon T \to \Mod \calO$. When $T$ is equivalent to $\widetilde{C}$ for some site $C$ and $\calO$ is a sheaf on $C$, then for $F \in T$, $\calO_F$ is defined in the same manner as $\mathbb{Z}_F$: $\calO_F$ is  the sheafification of the presheaf $X \mapsto \bigoplus_{s \in F(X)}\calO(X)$. As usual, when $F = (h_U)^a$ for  $U \in C$ we denote $\calO_{F}$ by $\calO_U$.

Let $u\colon (C, \calO_C) \to (D, \calO_D)$ be a morphism of ringed sites. Then the above template for the construction of $j_!^{\ab}$ admits a verbatim translation (replacing the free functor $\mathbb{Z}_{-}$ by the free functor $\calO_{-}$) and allows one to construct a left adjoint $u_!^{\Mod}\colon \Mod \calO_C \to \Mod \calO_D$ to the functor $u^*(-)\otimes_{u^*\calO_D}\calO_C \colon \Mod \calO_D \to \Mod \calO_C$. When $u = j_X\colon C_{/X} \to C$ is the projection morphism associated to some object $X$ of a site $C$ and $C_{/X}$ has the induced topology, $u_!^{\Mod}$ is even defined by the same formula \ref{eq:j_!} as $u_!^{\ab}$. Again we will denote $u_!^{\Mod}$ by $u_!$.

%-------------------------------------------------------------------------
\subsection{Excision}
\label{S:excisionTopoi}
%-------------------------------------------------------------------------

Here we recall very general facts about immersions of topoi, open and closed sub-topoi, excision, and cohomology supported in a closed sub-topos.
\\

Let $f \colon T' \to T$ be a morphism of topoi. We say that $f$ is an \defi{immersion} if $f_*$ is fully faithful \cite{SGA4:I}*{Definition 9.1.2}; by Yoneda's lemma this is equivalent to the adjunction $\id \to f^{-1}f_*$ being an isomorphism.

Let $T$ be a topos. Then $T$ has a final object (see \ref{p:sheaf}), a choice of which we denote by $\e_T$. Following \cite{SGA4:I}*{Definition 8.3}, we say that an object $U \in T$ is \defi{open} if it is a subobject of $\e_T$ (i.e., if the map $U \to \e_T$ is a monomorphism). Similarly, for $X \in T$ we define an \defi{open} of $U \subset X$ to be an open object $U \subset T_{/X}$ of the topos $T_{/X}$.  

Let $U \in T$ be open. The restriction map $j\colon T_{/U} \to T$ induces a morphism $(j^*,j_*)$ of topoi, which is an immersion -- indeed, using the explicit description of the pair $(j^*,j_*)$ and that $U \to \e_T$ is a monomorphism one can easily check that the adjunction is an isomorphism. We define $T' \to T$ to be an \defi{open immersion} of topoi if it is isomorphic to $T_{/U} \to T$ with $U \in T$ open, and we say that $T' \to T$ is an open subtopos, and we say that a morphism of sites is an open immersion if the induced morphism of topoi is an open immersion.

Now let $T' \to T$ be an open immersion, and let $U \in T$ be an open such that $T' \to T$ is isomorphic to $T_{/U} \to T$. As in \cite{SGA4:I}*{9.3.5}, we define the \defi{closed complement} $Z$ of $T'$ in $T$ to be the \emph{complement} of $T_{/U}$ in $T$, i.e., the largest sub-topos $Z$ of $T$ such that $T_{/U} \cap Z$ is equivalent to $\{\e_{T_{/U}}\}$. Concretely, $Z$ is the full subcategory of objects $F \in T$ such that the projection map $U \times F \to U$ is an isomorphism (i.e., such that $j^*F$ is isomorphic to $\e_{T_{/U}}$). The category $Z$ is independent of the choice of $U$. When $T' = T_{/U}$, we will also call $Z$ the closed complement of $U$.

We denote the inclusion $Z \hookrightarrow T$ by $i_*$ and remark that by \cite{SGA4:I}*{Proposition 9.3.4}, the map $i^*\colon T \to Z$ given by $F \mapsto U \coprod_{U \times F} F$ is adjoint to $i_*$, with adjunction given by the projection morphism $F \to U \coprod_{U   \times F} F$, and that together these form a morphism $(i^*,i_*)\colon Z \to T$ of topoi. Since $Z$ is a full subcategory, the inclusion $i$ is an immersion of topoi, and we say that any immersion of topoi isomorphic to an immersion $Z \to T$ arising as the closed complement of an open immersion is a \defi{closed immersion} of topoi and say that $Z$ is a closed sub-topos of $T$.

%%%%%%%%%%%%%%%%%%%%%%%%%%%%%%%%%%%%%%%%%%%%%%%%%%%% 
\begin{remark}
\label{R:geometricImmersions}
%%%%%%%%%%%%%%%%%

  Let $C$ be a site. Let $X \to X'$ be a monomorphism in $C$. Then the induced map $\widetilde{C}_{/X} \to \widetilde{C}_{/X'}$ is an open immersion. In particular, when $C = \Sch$, two odd examples of `open immersions' arise from $X \to X'$ a closed immersion or $\Spec \calO_{X',\, x} \to X'$, with $x \in X'$! Remarkably, as above one can still define a notion of `closed complement' of a closed immersion and deduce an excision theorem (see Proposition \ref{P:excisionRelations}). 

Of course, the more interesting open immersions are those whose closed complements admit a `geometric' description. For example, if $U \subset X$ is an open inclusion of topological spaces, then $U$ is an object of the site $\Open X$ and the restriction morphism $j\colon \Open U \cong (\Open X)_{/U} \to \Open X$ induces the usual morphism of topoi induced by the continuous morphism of sites $\Open X \to \Open U,\, U' \mapsto U \cap U'$. If we denote by $Z$ the (topological) complement of $U$ in $T$, then the closed complement of $U$ in $X_{\Open}$ is isomorphic to the usual inclusion induced by the continuous morphism of sites $\Open X \to \Open Z$ given again by intersection.

Another `geometric' example is le Stum's explication of Berthelot's $j^{\dagger}$ functor \cite{leStum:rigidBook}*{Proposition 5.1.12 (a)}; see Remark \ref{R:jDaggerDescription}.

%%%%%%%%%%%%%%%%%
\end{remark}
%%%%%%%%%%%%%%%%%%%%%%%%%%%%%%%%%%

A closed immersion $Z \to T$ of topoi enjoys many of the same properties as the classical case $\Open Z \to \Open X$; see \cite{SGA4:I}*{9.4} for a nice discussion. Here we recall everything relevant to excision.
\\

Let $(T,\calO_T)$ now be a ringed topos. Let $U \in T$ be open with closed complement $Z$ and set $\calO_U := j^*\calO_T$ and $\calO_Z := i^*\calO_Z$. We have the following diagrams of topoi, where each arrow is left adjoint to the arrow directly below it. The functors $j^*$, and $j_*$ (resp. $i^{*}$ and $i_{*}$) restrict to a pair of adjoint functors (note that tensoring is not necessary!), giving a diagram
\[\xymatrix@C=2cm@M=.25cm{
	\Mod \calO_{U} 
		\ar@<7pt>[r]^{j^{\ab}_{!}}\ar@<-7pt>[r]_{j_{*}} &
	\Mod \calO_T
		\ar[l]|{j^{*}}
		\ar@<7pt>[r]^{i^{*}}\ar@<-7pt>[r]_{i^{!}} &
	\Mod \calO_{Z} 
		\ar[l]|{i_{*}}	
  }
\]
where the left arrows were defined in (\ref{p:modFunctors}) and the extra adjoint $i^!$ is defined by
\[
i^{!}P = \ker \left(i^{*}P \to i^{*}j_{*}j^{*}P\right)
\]
(see \cite{SGA4:I}*{ expos\'e 4, 9.5 and 14.4} for a more intrinsic description of $i^!$). Note that $i_*$ is thus exact as in the case of a closed immersion of schemes. 

The functor $j^{\ab}_!$ differs from the usual $j_!$ (see (\ref{p:abelianExtensionZero})), but when the context is clear we will write $j_!$; in particular $j_!$ of a sheaf of abelian groups will \emph{always} refer to $j^{\ab}_!$.

%%%%%%%%%%%%%%%%%%%%%%%%%%%%%%%%%%%%%%%%%%%%%%%%%%%
\begin{proposition}
\label{P:TopoiImmProp}
%%%%%%%%%%%%%%%%%

Let $P \in\Mod \calO_T$. Then the following are true.
    \begin{itemize}
        \item  [(i)] $0 \to j_{!}j^{*} P \to P \to i_{*}i^{*}P \to 0$ 
	is exact;
    
        \item  [(ii)] $0 \to i_{*}i^{!}P \to P \to j_{*}j^{*}P$ is
	exact;
    
        \item  [(iii)] $i_{*}i^{!}P$ is the `largest subsheaf of $P$
	supported on $Z$' (see \cite{SGA4:I}*{expos\'e 4, 9.3.5});
    
        \item  [(iv)] $i_{*}i^{*}P \cong i_{*}\calO_{Z} \otimes
	_{\calO_T} P$;
    
        \item  [(v)] $i_{*}i^{!}P \cong \HOM_{\calO_T}(i_{*}\calO_{Z},P)$.	
	\item  [(vi)] $j_{*}j^*P \cong
	\HOM_{\calO_T}\left(j_{!}\calO_{U},P\right)$.
    \end{itemize}

%%%%%%%%%%%%%%%%%
\end{proposition}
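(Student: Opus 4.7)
The plan is to establish (i) as the geometric core and derive the remaining items formally. The key tools I would assemble first are: the three adjunctions $(j_!, j^*, j_*)$ and $(i^*, i_*, i^!)$ recorded in the paragraphs preceding the statement; the identities $j^*j_! \cong \id$, $j^*j_* \cong \id$, $i^*i_* \cong \id$, all coming from the fully faithfulness of $j_!$, $j_*$, $i_*$ via Lemma \ref{L:unitFull}; the vanishings $j^*i_* = 0$ and $i^*j_! = 0$ (the first is immediate from $j^*G = U\times G$ and the definition of $Z$ as the full subcategory of objects with $U\times F \cong U$, the second then follows by adjunction since $\Hom(i^*j_!F, H) = \Hom(F, j^*i_*H) = 0$); the exactness of $i_*$ (sandwiched between two adjoints); and the conservativity of the pair $(j^*, i^*)$ on $\Mod \calO_T$, inherited from the set-level statement in \cite{SGA4:I}*{expos\'e 4, 9.4} through the faithful exact forgetful functor $\Mod \calO_T \to T$.

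For (i), I would apply the two exact functors $j^*$ and $i^*$ separately to the candidate sequence. Applying $j^*$ collapses it to $0 \to j^*P \to j^*P \to 0 \to 0$ and applying $i^*$ collapses it to $0 \to 0 \to i^*P \to i^*P \to 0$; both are obviously exact, and conservativity concludes. For (iv), I would specialize (i) to $P = \calO_T$ to obtain the fundamental short exact sequence $0 \to j_!\calO_U \to \calO_T \to i_*\calO_Z \to 0$, then tensor with an arbitrary $P$. Invoking the projection formula $j_!\calO_U \otimes_{\calO_T} P \cong j_!j^*P$ — which drops out by distributing $-\otimes P$ through the explicit colimit formula (\ref{eq:j_!}) defining $j_!^{\Mod}$ — together with (i) again identifies $i_*\calO_Z \otimes_{\calO_T} P$ with $\coker(j_!j^*P \to P) \cong i_*i^*P$.

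The remaining items are formal. For (vi), evaluating $\HOM_{\calO_T}(j_!\calO_U, P)$ on a section over $X \in T$, I would localize to $T_{/X}$ and apply the adjunction $(j_!, j^*)$ to rewrite it as $\Hom_{\calO_{U\cap X}}(\calO_{U\cap X}, j^*P|_{U\cap X}) = (j_*j^*P)(X)$; the identical template applied to the adjunction $(i_*, i^!)$ produces (v). For (ii), I would apply the left exact functor $i_*$ to the defining kernel sequence $0 \to i^!P \to i^*P \to i^*j_*j^*P$ and splice it with (i) applied to both $P$ and $j_*j^*P$ via a short diagram chase, identifying $i_*i^!P$ with $\ker(P \to j_*j^*P)$. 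For (iii), the object $i_*i^!P$ is supported on $Z$ because $j^*i_* = 0$, and any subsheaf $F \hookrightarrow P$ with $j^*F = 0$ makes $F \to P \to j_*j^*P$ vanish by $(j^*, j_*)$-adjunction, forcing $F$ through the kernel $i_*i^!P$ by (ii). The main obstacle I anticipate is bookkeeping: tracking the distinction between $j_!^{\set}$ and $j_!^{\Mod}$ flagged in (\ref{p:abelianExtensionZero}), verifying the module-level projection formula cleanly, and confirming conservativity at the module level — each step is routine in isolation but easy to muddle if the correct adjoint pair is not identified at each invocation.
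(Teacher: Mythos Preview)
Your proposal is correct and follows the same approach the paper gestures at: the paper's proof is essentially a list of citations to SGA4, expos\'e~IV (14.6, 14.8, 14.10, 12.6), prefaced by the remark that everything follows from the covering theorem \cite{SGA4:I}*{14.3} together with the standard $\HOM$/$\otimes$ adjunctions \cite{SGA4:I}*{12.1, 12.7}, and what you have written is a faithful unpacking of exactly that strategy (joint conservativity of $(j^*,i^*)$ for (i), specialization plus projection formula for (iv), and adjunction/Yoneda manipulations for (ii), (iii), (v), (vi)). One small imprecision: your derivation of (v) is not literally ``the identical template'' as (vi) via the single adjunction $(i_*,i^!)$ --- you need either to feed (iv) into the tensor--$\HOM$ adjunction (so that $Q\otimes i_*\calO_Z \cong i_*i^*Q$ and then apply both $(i^*,i_*)$ and $(i_*,i^!)$), or to apply $\HOM(-,P)$ to the short exact sequence $0\to j_!\calO_U\to\calO_T\to i_*\calO_Z\to 0$ and invoke (ii) and (vi); either route is immediate, but neither is a one-adjunction argument.
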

%%%%%%%%%%%%%%%%%%%%%%%%%%%%%%%%%%

The proofs of these (and basically any identity involving these 6 functors) follows from a combination of the very simple description of these functors and maps between them via the covering theorem \cite{SGA4:I}*{14.3} and, for $M,N \in \Mod \calO$, the two adjunctions (or if one prefers, \emph{definitions}) $\Hom_T(-,\HOM_{\calO}(M,N)) \cong \Hom_{\calO}(M, \HOM_T(-,N))$ (as functors on $T$) \cite{SGA4:I}*{Proposition 12.1} and $\Hom_{\Ab T}(P, M \otimes_{\calO}N) \cong \Hom_{\calO}(M, \HOM_{\mathbb{Z}}(N,-))$ (as functors on $\Ab T$) \cite{SGA4:I}*{Proposition 12.7}.

%%%%%%%%%%%%%%%%%%%%%%%%%%%%%%%%%%%%%%%%%%%%%%%%%%%
\begin{proof}
%%%%%%%%%%%%%%%%%

These are in \cite{SGA4:I}*{ expos\'e 4}: (i \& ii) are 14.6 (account for the typo in (ii)), (iii) is 14.8, (iv) is 14.10, 1, (v) is 14.10, 2, and (vi) follows from the  proof of 14.10 (see also 12.6).

%%%%%%%%%%%%%%%%%
\end{proof}
%%%%%%%%%%%%%%%%%%%%%%%%%%%%%%%%%%

%%%%%%%%%%%%%%%%%%%%%%%%%%%%%%%%%%%%%%%%%%%%%%%%%%%
\begin{definition}
%%%%%%%%%%%%%%%%%

Given $E \in \Mod \calO_T$, we define $\scrH_{Z}^{0}E := i_{*}i^{!}E$ to be the \defi{sheaf of sections of} $E$ \defi{supported on} $Z$ and denote the derived functors of $E \mapsto \scrH_{Z}^{0}E$  by $\scrH^{i}_{Z}E$.

%%%%%%%%%%%%%%%%%
\end{definition}
%%%%%%%%%%%%%%%%%%%%%%%%%%%%%%%%%%

We can derive the functor $\scrH_{Z}^{0}$ either as a functor on $\Ab T$ or on $\Mod \calO_T$, because the functors $i_*$ and $i^!$ commute with the (exact) forgetful functor $\Mod \calO_T \to \Ab T$.

%%%%%%%%%%%%%%%%%%%%%%%%%%%%%%%%%%%%%%%%%%%%%%%%%%%
\begin{remark}
%%%%%%%%%%%%%%%%%

This is an appropriate name, because $\Gamma(T,\scrH_{Z}^{0}E)$ is the $\Gamma(T,\calO_T)$-module of all sections $s$ of $E$ supported on $Z$ (i.e., such that $s|_{U} = 0$); see \cite{SGA4:I}*{Proposition 14.8}.

%%%%%%%%%%%%%%%%%
\end{remark}
%%%%%%%%%%%%%%%%%%%%%%%%%%%%%%%%%%

%%%%%%%%%%%%%%%%%%%%%%%%%%%%%%%%%%%%%%%%%%%%%%%%%%%
\begin{definition}
%%%%%%%%%%%%%%%%%
 
Let $f\colon (T,\calO_T) \to (T',\calO_{T'})$ be a morphism of ringed topoi. We define the \defi{cohomology} (resp. \defi{relative cohomology}) \defi{of} $E$ \defi{supported on}  $Z$ to be the right derived functors of $E \mapsto \Gamma(T,\scrH_{Z}^{0}E)$ by $H^{i}_{Z}(T,E)$ and the derived functors of $E \mapsto f_*\scrH_{Z}^{0}E$ by $\mathbb{R}f_{*,Z}E$.

%%%%%%%%%%%%%%%%%
\end{definition}
%%%%%%%%%%%%%%%%%%%%%%%%%%%%%%%%%%

%%%%%%%%%%%%%%%%%%%%%%%%%%%%%%%%%%%%%%%%%%%%%%%%%%%
\begin{proposition}
\label{P:excisionRelations}
%%%%%%%%%%%%%%%%%

Let $E \in \Mod \calO_T$ and let the notation be as above. Then the following are true.
\begin{itemize}
   \item [(i)] $0 \to \scrH^{0}_{Z}E \to E \to j_{*}j^{*}E \to \scrH^{1}_{Z}E \to 0$ is exact.
    
   \item [(ii)] There is a long exact sequence 
     \[
	\ldots \to H^{i}_{Z}(T,E) \to H^{i}(T,E) \to H^{i}(U, j^{*}E) 
	\to H^{i+1}_{Z}(T,E)\to \ldots 
	\]

   \item [(iii)] Let $U' \subset U \subset T$ be a sequence of open immersions with closed complements $Z \hookrightarrow Z' \hookrightarrow T$ and denote by $Z' \cap U$ the closed complement of $U'$ in $U$. Then there is a long exact sequence 
     \[
	\ldots \to H^{i}_{Z}(T,E) \to H_{Z'}^{i}(T,E) \to H^{i}_{Z' \cap U}(U, j^{*}E) 
	\to H^{i+1}_{Z}(T,E)\to \ldots 
	\]
        
   \item [(iv)] There is a spectral sequence 
     \[
       \mathbb{R}^jf_* \scrH^j_Z E \Rightarrow \mathbb{R}^{i+j} f_{*,Z}E.
     \]
    \end{itemize}

%%%%%%%%%%%%%%%%%
\end{proposition}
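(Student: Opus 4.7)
The strategy for (i)--(iii) is to first establish a single distinguished triangle
\[
\mathbb{R}\scrH^0_Z E \to E \to \mathbb{R}j_* j^* E \xrightarrow{+1}
\]
in $D^+(\Mod \calO_T)$ and then to extract each statement by formal derived-category manipulations. The core technical point is to upgrade Proposition~\ref{P:TopoiImmProp} (ii) to a short exact sequence when $E$ is replaced by an injective. Since $j_!$ is exact for a localization morphism (as noted in the excerpt after equation~(\ref{eq:j_!})), the functor $j^*$ preserves injectives, and hence $j_* j^* I$ is injective whenever $I$ is. The surjectivity of $I \to j_* j^* I$ for $I$ injective reduces to the following local check: a section of $j_* j^* I$ over $X \in T$ corresponds via the $(j_!,j^*)$-adjunction to a map $j_! \calO_{X \times U} \to I$ in $\Mod \calO_T$, where $j_! \calO_{X \times U} \hookrightarrow \calO_X$ is a monomorphism (coming from the mono $U \hookrightarrow \e_T$ and the exactness of the free-module functor on monomorphisms). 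Injectivity of $I$ then extends this to a map $\calO_X \to I$, i.e.\ a section of $I$ over $X$ lifting the given one. Applying $\scrH^0_Z = i_* i^!$ to an injective resolution $E \to I^\bullet$ and using exactness of $0 \to \scrH^0_Z I^n \to I^n \to j_* j^* I^n \to 0$ gives a short exact sequence of complexes, and hence the desired triangle.

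Given the triangle, statement~(i) is the long exact cohomology sequence in low degrees, using that $E$ is concentrated in degree zero. Statement~(ii) is obtained by applying $\mathbb{R}\Gamma(T, -)$ and the identification $\mathbb{R}\Gamma(T, \mathbb{R}j_* F) \cong \mathbb{R}\Gamma(U, F)$. For (iii), I would apply the octahedral axiom to the chain $E \to \mathbb{R}j_* j^* E \to \mathbb{R}j'_* (j')^* E$, where $j' \colon U' \to T$ factors as $j$ composed with the inclusion $\alpha \colon U' \to U$ and the composite is the $j'$-adjunction unit. Up to shift, the three cones identify respectively with $\mathbb{R}\scrH^0_Z E$, $\mathbb{R}\scrH^0_{Z'} E$, and $\mathbb{R}j_* \mathbb{R}\scrH^0_{Z' \cap U}(j^* E)$ --- the last obtained by applying the fundamental triangle in $U$ with respect to the open immersion $U' \subset U$ and then pushing forward via $\mathbb{R}j_*$. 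The octahedral axiom assembles these into a triangle in $T$, and applying $\mathbb{R}\Gamma(T, -)$ yields the long exact sequence of~(iii).

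Statement~(iv) is the Grothendieck spectral sequence for the composition $f_{*,Z} = f_* \circ \scrH^0_Z$. Its applicability reduces to showing $\scrH^0_Z = i_* i^!$ sends injectives to $f_*$-acyclics, for which it suffices that it preserve injectives. The excerpt records that $i_*$ is exact, so $i^!$ preserves injectives (being right adjoint to $i_*$); moreover $i^*$ is exact, as $(i^*, i_*) \colon Z \to T$ is a morphism of topoi, so $i_*$ (right adjoint to the exact $i^*$) also preserves injectives. The main obstacle I anticipate is the surjectivity check in the construction of the fundamental triangle and, to a lesser extent, the careful bookkeeping of the octahedral axiom for~(iii); everything else is formal from the established functor-theoretic setup together with Proposition~\ref{P:TopoiImmProp}.
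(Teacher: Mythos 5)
Your proposal is correct, and it is considerably more self-contained than the paper's proof, which for (i)--(iii) simply defers to \cite{SGA4:II}*{expos\'e 5, Proposition 6.5} and Proposition \ref{P:TopoiImmProp}, and for (iv) invokes the composite-functor spectral sequence. The substantive content you supply — and supply correctly — is exactly what the citation hides: that $I \to j_*j^*I$ is surjective for $I$ injective (your reduction to extending a map along the monomorphism $j_!\calO_{X\times U} \hookrightarrow \calO_X$, which is the free module functor applied to the mono $X\times U \hookrightarrow X$ coming from $U \hookrightarrow \e_T$, is the standard and correct argument), that $j^*$ and $j_*$ preserve injectives so the resulting short exact sequence of complexes realizes the triangle $\mathbb{R}\scrH^0_ZE \to E \to \mathbb{R}j_*j^*E \to {}$, and that $i_*i^!$ preserves injectives (right adjoint to the exact $i^*j_!\dots$ — more precisely, $i^!$ is right adjoint to the exact $i_*$ and $i_*$ is right adjoint to the exact $i^*$), which is needed both to identify $H^i_Z(T,E)$ with the hypercohomology of $\mathbb{R}\scrH^0_ZE$ in (ii) and to run the Grothendieck spectral sequence in (iv). The only point where your route genuinely diverges from the paper is (iii): the paper's (terse) suggestion is to rerun the proof of (ii) with $P = i_*i^!\calO_T$ in Proposition \ref{P:TopoiImmProp}(ii), whereas you apply the octahedral axiom to $E \to \mathbb{R}j_*j^*E \to \mathbb{R}j'_*(j')^*E$; your version is arguably cleaner, and the two compatibilities it relies on (the composite of adjunction units is the adjunction unit for $j' = j\circ\alpha$, and $\mathbb{R}j'_* = \mathbb{R}j_*\mathbb{R}\alpha_*$) are standard. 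No gaps.
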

%%%%%%%%%%%%%%%%%%%%%%%%%%%%%%%%%%

%%%%%%%%%%%%%%%%%%%%%%%%%%%%%%%%%%%%%%%%%%%%%%%%%%%
\begin{proof}
%%%%%%%%%%%%%%%%%

Claims (i) - (iii) follow directly from Proposition \ref{P:TopoiImmProp} above; see \cite{SGA4:II}*{expos\'e 5, Proposition 6.5} for (i) and (ii), and for (iii) apply the proof of (ii) but with $P = i_*i^!\calO_T$ (instead of $P = \calO_T$) in Proposition \ref{P:TopoiImmProp} (ii). Claim (iv) is just the spectral sequence associated to a composition of derived functors.

%%%%%%%%%%%%%%%%%
\end{proof}
%%%%%%%%%%%%%%%%%%%%%%%%%%%%%%%%%%

Lastly, we discuss functorality. For a ringed topos $(T,\calO_T)$ and a morphism of topoi $g\colon T' \to T$, we set  $\calO_{T'} := g^{*}\calO_T$, and if $T' = T_{/X}$ for some $X \in T$ we write $\calO_X$ for $\calO_{T_{/X}}$. 

%%%%%%%%%%%%%%%%%%%%%%%%%%%%%%%%%%%%%%%%%%%%%%%%%%%
\begin{proposition}
\label{P:closedSupportRelative}
%%%%%%%%%%%%%%%%%

Let $(T,\calO_T)$ be a ringed topos, let $f\colon X' \to X$ be a morphism in $\widehat{T}$, let $j\colon U \subset X$ be an open immersion with closed complement $i\colon Z \hookrightarrow T_{/X}$, let $j'\colon U' = U \times_X X' \subset X'$ and denote its closed complement by $i'\colon Z'\hookrightarrow T_{/X'}$.  Let $E \in \Mod \calO_X$. Then % $Z' = Z \times_{T_{/X}} T_{/X'}$, and 
there is a natural map 
\[
  H^i_Z(T_{/X},E) \to H^i_{Z'}(T_{/X'},f^*E).
\]

%%%%%%%%%%%%%%%%%
\end{proposition}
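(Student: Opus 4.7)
The plan is to obtain the map by comparing the excision long exact sequences of Proposition \ref{P:excisionRelations}(ii) under a functorial pullback. First, I would note that the morphism $f\colon X' \to X$ in $\widehat{T}$ induces a morphism of ringed topoi $g\colon (T_{/X'}, \calO_{X'}) \to (T_{/X}, \calO_X)$ with $\calO_{X'} = g^*\calO_X$. Since $U' = U \times_X X'$, base change produces a morphism $g_U\colon T_{/U'} \to T_{/U}$ fitting into a commutative square
\[
\xymatrix{
T_{/U'} \ar[r]^{j'} \ar[d]^{g_U} & T_{/X'} \ar[d]^g \\
T_{/U} \ar[r]^{j} & T_{/X}
}
\]
yielding the identification $j^{'*}g^* \cong g_U^* j^*$ on modules.

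Next, Proposition \ref{P:excisionRelations}(ii) gives excision long exact sequences for the data $(T_{/X}, U, Z, E)$ and $(T_{/X'}, U', Z', g^*E)$. The units of adjunction $E \to g_*g^*E$ and $j^*E \to g_{U*}g_U^*j^*E \cong g_{U*}j^{'*}g^*E$ produce natural pullback morphisms $H^i(T_{/X}, E) \to H^i(T_{/X'}, g^*E)$ and $H^i(U, j^*E) \to H^i(U', j^{'*}g^*E)$, and commutativity of the cartesian square above ensures they commute with the restriction maps appearing in excision. This extends, at the derived level, to a morphism of distinguished triangles, and the map on the leftmost terms is the desired natural map $H^i_Z(T_{/X}, E) \to H^i_{Z'}(T_{/X'}, g^*E)$.

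To ensure that this map is canonical rather than merely existent (as axiom (TR3) would give), I would construct it directly at the sheaf level. The base change map $g^*(j_*j^*E) \to j'_*j^{'*}g^*E$ (adjoint to the composition $j^{'*}g^*j_*j^*E \cong g_U^*j^*j_*j^*E \to g_U^*j^*E \cong j^{'*}g^*E$ via the counit $j^*j_* \to \id$) combines with the identity on $g^*E$ into a commutative square. Using the identification $\scrH^0_Z E \cong \ker(E \to j_*j^*E)$ from Proposition \ref{P:TopoiImmProp}(ii), taking kernels yields a natural transformation $g^*\scrH^0_Z E \to \scrH^0_{Z'}g^*E$; deriving and applying $\mathbb{R}\Gamma(T_{/X'}, -)$ then produces the map on $H^i_Z$.

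The main technical point is exactness of the relevant pullbacks and the base change isomorphism $j^{'*}g^* \cong g_U^*j^*$. Both are formal: the morphisms $g$ and $g_U$ are of localization type, so their module pullbacks are exact (reducing to the underlying sheaf pullbacks), and the base change isomorphism follows from the universal property of the cartesian square $U' = U \times_X X'$ in $\widehat{T}$ together with the commutativity of localization with fiber products recorded in \ref{p:presheafUnions}.
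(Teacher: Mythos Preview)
Your proposal is correct and follows essentially the same route as the paper: both arguments hinge on the commutative square of localizations over $U' = U\times_X X'$, the identification $\scrH^0_Z E = \ker(E \to j_*j^*E)$ from Proposition~\ref{P:TopoiImmProp}(ii), and exactness of the localization pullback $f^*$ to produce a natural map $f^*\scrH^0_Z E \to \scrH^0_{Z'}f^*E$, which is then derived. The one minor difference is that the paper observes the base-change comparison $f^*j_*j^*E \to j'_*j'^*f^*E$ is actually an \emph{isomorphism} (so that $f^*i_*i^!E \cong i'_*i'^!f^*E$), whereas you only use it as a map; your weaker statement already suffices for the proposition, while the paper's stronger form is what is later used (e.g.\ in the proof of Proposition~\ref{P:functoralityClosedSupport}) to match the construction with Berthelot's classical functoriality.
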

%%%%%%%%%%%%%%%%%%%%%%%%%%%%%%%%%%

%%%%%%%%%%%%%%%%%%%%%%%%%%%%%%%%%%%%%%%%%%%%%%%%%%%
\begin{proof}
%%%%%%%%%%%%%%%%%

It follows from the commutativity of the diagram 
\[
\xymatrix{
T_{/U'}\ar[d]\ar[r]&T_{/X'}\ar[d]&\\
T_{/U}\ar[r]&T_{/X}&
}
\]
of topoi that 
\[
f^*j_*j^*E \cong j'_*j'^*f^*E
\]
and that the composition 
\[
f^*E \to f^*j_*j^*E \cong j'_*j'^*f^*E
\]
is the adjunction. Then there is an isomorphism
\[
f^*i_*i^! E = f^* \ker \left(E \to j_*j^*E\right) = \ker \left(f^*E
  \to f^*j_*j^*E\right) = i'_*i^{'!}f^*E,
\]
where the second equality follows from exactness of $f^*$ on $\Mod
\calO_X$ (recall that $\calO_{X'} = f^*\calO_X$) and the other two follow from Proposition \ref{P:TopoiImmProp} (ii);
the natural map 
\[
H^0(T_{/X}, i_*i^!E) \to  H^0(T_{/X'},f^*i_*i^!E) \cong H^0(T_{/X'}, i'_*i^{'!}f^*E)
\]
thus induces a map 
\[
  H^i_Z(T_{/X},E) \to H^i_{Z'}(T_{/X'},f^*E).
\]

%%%%%%%%%%%%%%%%%
\end{proof}
%%%%%%%%%%%%%%%%%%%%%%%%%%%%%%%%%%

%%%%%%%%%%%%%%%%%%%%%%%%%%%%%%%%%%%%%%%%%%%%%%%%%%%%%%%%%%%%%%%%%%%%%%%%%%
\subsection{Excision on the overconvergent site}
\label{S:overconvergentExcision}
%%%%%%%%%%%%%%%%%%%%%%%%%%%%%%%%%%%%%%%%%%%%%%%%%%%%%%%%%%%%%%%%%%%%%%%%%%

Here we apply the very general notions of Section \ref{S:excisionTopoi} to the overconvergent site. We state everything for $\AN^{\dagger}$ and for the sake of brevity omit restating definitions for the good variants on $\AN^{\dagger}_{\g}$, but note that everything carries over without incident.
\\

Let $X \to \AN^{\dagger}(\calV)$ be a fibered category over the overconvergent site and let $F \in \Mod \calO^{\dagger}_X$ be an overconvergent module on $X$. Let $U \in X_{\AN^{\dagger}}$ be open in the sense of Section \ref{S:excisionTopoi} (i.e.  $U$ is a subsheaf of the final object of $X_{\AN^{\dagger}}$). Denote by $j\colon  U_{\AN^{\dagger}} := (X_{\AN^{\dagger}})_{/U} \hookrightarrow  X_{\AN^{\dagger}}$ the open immersion of topoi induced by restriction, denote by $Z_{\AN^{\dagger}}$ the closed complement of $U$ in $X_{\AN^{\dagger}}$, and denote by $\calO^{\dagger}_U$ and $\calO^{\dagger}_Z$ the restrictions of $\calO^{\dagger}_X$ to $U_{\AN^{\dagger}}$ and $Z_{\AN^{\dagger}}$. As in Section \ref{S:excisionTopoi} this induces a collage of adjoint functors 
\[\xymatrix@C=2cm@M=.25cm{
	\Mod \calO^{\dagger}_{U} 
		\ar@<7pt>[r]^{j_{!}}\ar@<-7pt>[r]_{j_{*}} &
	\Mod \calO^{\dagger}_X
		\ar[l]|{j^{*}}
		\ar@<7pt>[r]^{i^{*}}\ar@<-7pt>[r]_{i^{!}} &
	\Mod \calO^{\dagger}_{Z} 
		\ar[l]|{i_{*}}.	
  }
\]

%%%%%%%%%%%%%%%%%%%%%%%%%%%%%%%%%%%%%%%%%%%%%%%%%%%
\begin{definition}
%%%%%%%%%%%%%%%%%

Let $E \in \Mod \calO_{X}^{\dagger}$ be an overconvergent module. We define $\underline{\Gamma}^{\dagger}_{Z}E := i_{*}i^{!}E$ to be the \defi{subsheaf of} $E$ \defi{of sections supported on} $Z$, and we define $H^{0}_{Z}(\AN^{\dagger} X,E) := H^{0}(\AN^{\dagger} X,\underline{\Gamma}^{\dagger}_{Z}E)$ to be the $H^{0}(\AN^{\dagger} X,\calO_{X}^{\dagger})$ submodule of \defi{sections of} $E$ \defi{supported on} $Z$. For a morphism $f\colon X \to Y$ of categories fibered over $\AN^{\dagger} \calV$, we define the \defi{relative cohomology} \defi{of} $E$ \defi{supported on}  $Z$ to be $\mathbb{R} f_{\AN^{\dagger}*}\underline{\Gamma}^{\dagger}_ZE$, which we denote by $\mathbb{R}f_{\AN^{\dagger}*,Z}E$. Since the realization functors are exact, when $Y = \Spec k$ the realization of the $i^{\Th}$ cohomology sheaf of $\mathbb{R}f_{\AN^{\dagger}*,Z}E$ is isomorphic to the $i^{\Th}$ derived functor of $H^{0}_{Z}(\AN^{\dagger} X,E)$; consequently we denote both of these $K$-vector spaces by $H^{i}_{Z}(\AN^{\dagger} X,E)$ and $H^{i}_{Z}(\AN^{\dagger} X,\calO^{\dagger}_X)$ by $H^{i}_{\rig,Z}(\AN^{\dagger} X)$.

%%%%%%%%%%%%%%%%%
\end{definition}
%%%%%%%%%%%%%%%%%%%%%%%%%%%%%%%%%%

This differs slightly from the definition of \ref{S:excisionTopoi}, and in addition we have switched from the notation $\scrH_{Z}^{0}$ of \cite{SGA4:I} to the notation $\underline{\Gamma}^{\dagger}_Z$ of \cite{leStum:rigidBook}*{Definition 5.2.10}.

%%%%%%%%%%%%%%%%%%%%%%%%%%%%%%%%%%%%%%%%%%%%%%%%%%%
\begin{example}
%%%%%%%%%%%%%%%%%

We will mainly consider the following examples of open immersions of sites.
\begin{itemize}
\item [(i)] $\AN^{\dagger} U \subset \AN^{\dagger} X$ with $X$ a scheme over $k$ and $U \subset X$ an open subscheme. 
\item [(ii)] $\AN^{\dagger} (U,V) \subset \AN^{\dagger} (X,V)$ with $(X,V) \in \AN^{\dagger}\calV$ an overconvergent variety and $U \subset X$ an open subscheme. 
\item [(iii)] $\AN^{\dagger} (U_V) \subset \AN^{\dagger} (X_V)$ with $(X,V) \in \AN^{\dagger}\calV$ an overconvergent variety and $U \subset X$ an open subscheme, where $X_V$ is the image subpresheaf of the morphism of sheaves $(X,V) \to X$ (see Definition \ref{D:imagePresheaf}). 
\item [(iv)] More generally, for an overconvergent variety $(C,O)$ and a scheme $X$ over $k$ with a morphism $X \to C$, we can consider the relative variants $\AN^{\dagger} X/O$ and $\AN^{\dagger} X_V/O$ (see Definition \ref{D:overconvergentBase}).
\end{itemize}

%%%%%%%%%%%%%%%%%
\end{example}
%%%%%%%%%%%%%%%%%%%%%%%%%%%%%%%%%%

These examples are all `representable' in the following sense.

%%%%%%%%%%%%%%%%%%%%%%%%%%%%%%%%%%%%%%%%%%%%%%%%%%%
\begin{definition}
\label{D:representable}
%%%%%%%%%%%%%%%%%

Let $j\colon U \subset X$ be a morphism of categories fibered over $\AN^{\dagger}(\calV)$ which induces an open immersion of topoi. We say that $j$ is \defi{representable} if for any overconvergent variety $(X',V')$ and morphism of fibered categories $(X',V') \to X$, there exists an open subscheme $U' \subset X'$ such that $(U',V')$ represents the 2-fiber product $U \times_ X \AN^{\dagger}(X',V')$.

%%%%%%%%%%%%%%%%%
\end{definition}
%%%%%%%%%%%%%%%%%%%%%%%%%%%%%%%%%%

Here we rewrite the excision sequences from Subsection \ref{S:excisionTopoi}.

%%%%%%%%%%%%%%%%%%%%%%%%%%%%%%%%%%%%%%%%%%%%%%%%%%%
\begin{proposition}[Translation of Proposition \ref{P:excisionRelations}]
\label{P:excisionRelationsDagger}
%%%%%%%%%%%%%%%%%

    Let $j\colon U \subset X$ be a representable open immersion and let $E \in \Mod \calO^{\dagger}_X$ be an overconvergent module. Then with the notation above, the following are true.
    \begin{itemize}
        \item [(i)] $0 \to \underline{\Gamma}^{\dagger}_{Z}E \to E \to j_{*}j^{*}E \to 0$ is exact.
    
        \item [(ii)] There is a long exact sequence 
	\[
	\ldots \to H^{i}_{Z}(\AN^{\dagger} X,E) \to H^{i}(\AN^{\dagger} X,E) \to H^{i}(\AN^{\dagger} U, j^{*}E) 
	\to H^{i+1}_{Z}(\AN^{\dagger} X,E)\to \ldots 
	\]

        \item [(iii)] There is a spectral sequence 
        \[
         \mathbb{R}^jf_{\AN^{\dagger}*} \mathbb{R}^j\underline{\Gamma}^{\dagger}_Z E \Rightarrow \mathbb{R}^{i+j} f_{\AN^{\dagger},Z}E.
        \]

    \end{itemize}

%%%%%%%%%%%%%%%%%
\end{proposition}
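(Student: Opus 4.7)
The plan is to deduce all three parts from the abstract topos-theoretic machinery of Proposition \ref{P:excisionRelations}, applied to the representable open immersion of topoi $j\colon U_{\AN^{\dagger}} \hookrightarrow X_{\AN^{\dagger}}$ with closed complement $Z_{\AN^{\dagger}}$.

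Part (iii) is essentially automatic: it is the Grothendieck spectral sequence for the composition of derived functors $f_{\AN^{\dagger}*} \circ \underline{\Gamma}^{\dagger}_Z$. The only thing to verify is that $\underline{\Gamma}^{\dagger}_Z = i_* i^!$ preserves injectives so that the spectral sequence converges as stated, which follows from the general fact that $i_*$ is exact and $i^!$ is its right adjoint (see Proposition \ref{P:TopoiImmProp} and the surrounding discussion).

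For part (ii), I apply $R\Gamma(\AN^{\dagger} X,-)$ to the distinguished triangle
\[
i_* R i^! E \longrightarrow E \longrightarrow R j_* j^* E \xrightarrow{+1}
\]
coming from Proposition \ref{P:TopoiImmProp}(ii). Since $\Gamma(\AN^{\dagger} X,-) \circ j_* = \Gamma(\AN^{\dagger} U,-)$, the right term is $R\Gamma(\AN^{\dagger} U, j^*E)$ by Grothendieck's composition-of-derived-functors spectral sequence, while the left term is by definition $R\Gamma_Z(\AN^{\dagger} X, E)$; the associated long exact sequence is exactly (ii).

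The substantive content is part (i): to pass from the general four-term sequence of Proposition \ref{P:excisionRelations}(i) to a short exact sequence, I must show $\scrH^1_Z E = i_* R^1 i^! E = 0$, equivalently that the unit $E \to j_*j^*E$ is surjective. Since epimorphisms in $\Mod \calO^{\dagger}_X$ can be checked on realizations (Proposition \ref{P:moduleReal}), representability of $j$ reduces this to a local statement: for each overconvergent variety $(X',V') \to X$ there is an open subscheme $U' \subset X'$ with closed complement $Z' \subset X'$ such that the pullback of $j$ is the localization at $(U',V')$, and it suffices to verify that $E_{X',V'} \to (j_*j^*E)_{X',V'}$ is surjective on the tube $]X'[_{V'}$. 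Unwinding definitions, the target identifies with the topological pushforward of the restriction of $E_{X',V'}$ along the open inclusion of tubes $]U'[_{V'} \hookrightarrow ]X'[_{V'}$; the required surjectivity then becomes Berthelot's classical excision statement for tubes \cite{leStum:rigidBook}*{Proposition~6.3.9}, transported through the equivalences of Proposition \ref{P:daggerYoga} and Remark \ref{R:jDaggerDescription}. The main obstacle is precisely this last identification of the realization of the overconvergent pushforward $j_*j^*E$ with the topological pushforward between tubes, which requires carefully chasing through the fibered structure of $\AN^{\dagger} X \to \AN^{\dagger} \calV$ together with the compatibility between the localization-at-$(U',V')$ construction and the tube inclusion $]U'[_{V'} \subset ]X'[_{V'}$.
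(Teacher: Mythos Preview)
Your overall strategy is exactly the paper's: parts (ii) and (iii) are formal consequences of Proposition~\ref{P:excisionRelations}, and the only real content is the surjectivity of $E \to j_*j^*E$ in part (i), which you correctly propose to check on realizations using representability.

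There is one concrete error and one unnecessary detour in your argument for (i). The error: the inclusion of tubes $]U'[_{V'} \hookrightarrow\, ]X'[_{V'}$ is \emph{closed}, not open. Tubes reverse open and closed---an open immersion $U' \hookrightarrow X'$ of schemes gives a closed immersion $]j'[\colon ]U'[_{V'} \hookrightarrow\, ]X'[_{V'}$ of analytic spaces (and the tube of the closed complement $Z'$ is open). The detour: once you know $]j'[$ is closed, there is no need to invoke Berthelot's classical excision. By Remark~\ref{R:geometricImmersions}, $]j'[$ induces a closed immersion of topoi, so the realization of the adjunction $E \to j_*j^*E$ is the unit $E_{X',V'} \to\, ]j'[_*]j'[^*E_{X',V'}$ for a closed immersion, and this is surjective directly by Proposition~\ref{P:TopoiImmProp}(i) (the sequence $0 \to j_!j^*P \to P \to i_*i^*P \to 0$ is exact). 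The paper does exactly this, forward-referencing the proof of Proposition~\ref{P:gammaRealization}(i) for the identification $(j_*j^*E)_{X',V'} \cong\, ]j'[_*]j'[^*E_{X',V'}$ compatible with adjunctions---which is indeed the step you flagged as the ``main obstacle.'' Appealing to the classical rigid result would be circular in spirit, since the comparison with Berthelot's theory is established only later (Proposition~\ref{P:agreementClosedSupports} and Theorem~\ref{P:agreementClosedSupportsCohomology}).
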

%%%%%%%%%%%%%%%%%%%%%%%%%%%%%%%%%%

%%%%%%%%%%%%%%%%%%%%%%%%%%%%%%%%%%%%%%%%%%%%%%%%%%%
\begin{proof}
%%%%%%%%%%%%%%%%%

Most of this is Proposition \ref{P:excisionRelations}; the only thing to check is that the map $E \to j_*j^*E$ is surjective. Since the morphism of sites of Definition \ref{D:sheafRealization} defines a
bijection of coverings, surjectivity can be checked on realizations. Let $(Y,V)$ be an overconvergent variety over $X$. Let $j'\colon U' \subset Y$ be an open immersion such that $(U',V)$ represents the fiber product $U \times_X (Y,V)$. Then $]j'[\colon ]U'[_V \hookrightarrow
\, ]Y[_V$ is now a closed immersion of analytic spaces.  Then, by the proof of part (i) of Proposition \ref{P:gammaRealization} below, there is an isomorphism 
\[
(j_*j^*E)_{Y,V} \cong \, ]j'[_*]j'[^* E_{Y,V}
\]
such that the composition
\[
E_{Y,V} \to (j_*j^*E)_{Y,V} \cong \, ]j'[_*]j'[^* E_{Y,V}
\]
is the adjunction
\[
E_{Y,V} \to ]j'[_*]j'[^* E_{Y,V}.
\]
By Proposition \ref{P:TopoiImmProp} (ii) (noting by Remark \ref{R:geometricImmersions} that $]j'[_*$ is a closed immersion of topoi) this map is surjective.

%%%%%%%%%%%%%%%%%
\end{proof}
%%%%%%%%%%%%%%%%%%%%%%%%%%%%%%%%%%

The first task is to check that this agrees with the classical construction due to Berthelot of rigid cohomology supported in a closed subscheme\note{cite, maybe explain or define}. Most of the work is packaged into the following proposition.

%%%%%%%%%%%%%%%%%%%%%%%%%%%%%%%%%%%%%%%%%%%%%%%%%%%
\begin{proposition}
\label{P:gammaRealization}
%%%%%%%%%%%%%%%%%

Let $(C,O)$ be an overconvergent variety. Let $X \to \AN^{\dagger}(C,O)$ be a fibered category and let $U \subset X$ be a sub-fibered category of $X$ which is an open subtopos of $X_{\AN^{\dagger}}$ such that for all $(X', V')$, the fiber product $U \times_{X} (X', V')$ is isomorphic to $(U', V')$ with $U'\subset X'$ an open subscheme of $X'$. Denote the closed complement (defined in Section \ref{S:excisionTopoi}) of $U_{\AN^{\dagger}} \subset X_{\AN^{\dagger}}$ by $Z$. Let $E \in \Cris^{\dagger} X$ be an overconvergent module, let $(f,u)\colon (X'', V'') \to (X',V') \in \AN^{\dagger} X$ be a morphism of overconvergent varieties over $X$, and denote by $j'$ the inclusion $U' \hookrightarrow X'$, where $(U',V')$ is $U \times_X (X',V')$ (and similarly $j'' \colon U'' \hookrightarrow X'')$. Then the following are true.

\begin{enumerate}
\item [(i)] The realization $(\underline{\Gamma}^{\dagger}_Z E)_{X',V'}$ is canonically isomorphic to the kernel of the adjunction morphism
\[
\ker \left(E_{X', V'} \to  ]j'[_*]j'[^{\dagger}E_{X',V'}\right).
\]
\item [(ii)] Denote by $i' \colon W' \subset \, ]X'[_{V'}$ the (open) complement of the closed inclusion $]U'[_{V'} \hookrightarrow ]X'[_{V'}$. Then there is an isomorphism
\[
\left(\underline{\Gamma}^{\dagger}_ZE\right)_{X',V'} \cong i'_!i^{'-1}E_{X',V'}.
\]
\item [(iii)] The sheaf $\underline{\Gamma}^{\dagger}_Z E$ is a crystal.

\end{enumerate}

%%%%%%%%%%%%%%%%%
\end{proposition}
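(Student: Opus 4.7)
The plan is to derive all three claims from the topos-theoretic identification $\underline{\Gamma}^{\dagger}_Z E = \ker(E \to j_* j^* E)$ of Proposition \ref{P:TopoiImmProp}(ii), combined with compatibility of the realization functor with the open/closed decomposition. For part (i), the realization $F \mapsto F_{X',V'}$ is a composition of the (exact) restriction to a localization and the (left exact) direct image $\varphi_{X',V'*}$, so it preserves kernels; hence
\[
(\underline{\Gamma}^{\dagger}_Z E)_{X',V'} = \ker\bigl(E_{X',V'} \to (j_* j^* E)_{X',V'}\bigr).
\]
It remains to identify $(j_* j^* E)_{X',V'}$ with $]j'[_{*} ]j'[^{\dagger} E_{X',V'}$. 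This should follow by base change: the representability hypothesis $U \times_X (X',V') \cong (U',V')$ ensures that after restriction to $\AN^{\dagger}(X',V')$, the pair $(j^{*}, j_{*})$ becomes the analogous pair $(j'^{*}, j'_{*})$ attached to the open subtopos $(U',V')_{\AN^{\dagger}} \hookrightarrow (X',V')_{\AN^{\dagger}}$. Functoriality of $\varphi$ then gives $\varphi_{X',V'*} \circ j'_{*} = \, ]j'[_{*} \circ \varphi_{U',V'*}$, while the crystal property of $E$ (via Remark \ref{R:ringedRealization}) identifies $\varphi_{U',V'*}(j'^{*}E|_{(X',V')_{\AN^{\dagger}}})$ with $]j'[^{\dagger} E_{X',V'}$.

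For part (ii), the map $]j'[\colon ]U'[_{V'} \hookrightarrow \, ]X'[_{V'}$ is a closed immersion of topological spaces (as is already used in the proof of Proposition \ref{P:excisionRelationsDagger}), with open complement $i'\colon W' \hookrightarrow \, ]X'[_{V'}$. Classical topological excision yields an exact sequence
\[
0 \to i'_{!} i'^{-1} F \to F \to \, ]j'[_{*} ]j'[^{-1} F \to 0
\]
for every abelian sheaf $F$ on $]X'[_{V'}$. Applying this with $F = E_{X',V'}$ and combining with part (i) gives the desired isomorphism, once one observes that $]j'[^{\dagger} = \, ]j'[^{-1}$ in this setting: because $i_{U',V'}^{-1}\calO_{V'} = \, ]j'[^{-1} i_{X',V'}^{-1}\calO_{V'}$, the tensor-product twist in the $\dagger$-pullback is trivial.

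For part (iii), fix a morphism $(f,u)\colon (X'',V'') \to (X',V')$ over $X$ and let $g\colon W'' \to W'$ be the induced map on open complements. I would first show that the square
\[
\xymatrix{
W'' \ar[r]^{g} \ar[d]_{i''} & W' \ar[d]^{i'} \\
]X''[_{V''} \ar[r]^{]f[_u} & ]X'[_{V'}
}
\]
is Cartesian: the representability hypothesis forces $U'' = f^{-1}(U')$ as open subschemes of $X''$, and compatibility of specialization with $u$ then gives $]f[_u^{-1}(]U'[_{V'}) = \, ]U''[_{V''}$, hence $]f[_u^{-1}(W') = W''$. Base change for open immersions yields $]f[_u^{\dagger} \circ i'_{!} = i''_{!} \circ g^{\dagger}$ (since $^{-1}$-pullback commutes with extension by zero across a Cartesian square and the extra tensor factor in the $\dagger$-pullback is harmless on modules supported on the open), and functoriality gives $g^{\dagger} \circ i'^{-1} = i''^{-1} \circ \, ]f[_u^{\dagger}$. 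Combining these with the crystal property $]f[_u^{\dagger} E_{X',V'} \cong E_{X'',V''}$ and a second application of (ii) shows that the transition map $]f[_u^{\dagger}(\underline{\Gamma}^{\dagger}_Z E)_{X',V'} \to (\underline{\Gamma}^{\dagger}_Z E)_{X'',V''}$ is an isomorphism.

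The main technical work will lie in making the base change step in (i) precise, specifically verifying that the localization of $X_{\AN^{\dagger}}$ over $(X',V')$ is equivalent to $(X',V')_{\AN^{\dagger}}$ in a way that identifies the pair $(j^{*}, j_{*})$ with the pair attached to the open subtopos $(U',V')_{\AN^{\dagger}} \hookrightarrow (X',V')_{\AN^{\dagger}}$, and that $\varphi$ intertwines the two open-immersion structures; once this is in hand, parts (ii) and (iii) reduce to standard topological excision and a formal application of base change together with the crystal property.
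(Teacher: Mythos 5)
Your proposal is correct and follows essentially the same route as the paper's proof: reduce to the realization on a fixed $(X',V')$, use exactness of the realization functor and the crystal property of $E$ to identify $(j_*j^*E)_{X',V'}$ with $]j'[_*]j'[^{\dagger}E_{X',V'}$, then deduce (ii) from the topological open/closed excision sequence (checked on stalks in the paper) and (iii) from base change of $i'_!i'^{-1}$ across the Cartesian square of complements together with the crystal isomorphism $]f[_u^{\dagger}E_{X',V'}\cong E_{X'',V''}$. The only presentational difference is that the paper packages the key identification in (i) as a commutative diagram of realization and restriction functors, whereas you phrase it as a base-change statement for the localized topoi; these are the same argument.
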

%%%%%%%%%%%%%%%%%%%%%%%%%%%%%%%%%%

%%%%%%%%%%%%%%%%%%%%%%%%%%%%%%%%%%%%%%%%%%%%%%%%%%%
\begin{proof}
%%%%%%%%%%%%%%%%%

It suffices to consider the case $X = (X',V'),$ and $U = (U',V')$. For simplicity we drop a prime everywhere in the notation (i.e., we consider a morphism  $(X',V') \to (X,V) \in \AN^{\dagger} X$). To avoid the potentially awkward notation $\id_{\AN^{\dagger}}$ we denote the morphisms $(U,V)_{\AN^{\dagger}} \to (X,V)_{\AN^{\dagger}}$ and $(U',V')_{\AN^{\dagger}} \to (X',V')_{\AN^{\dagger}}$ by $j_{\AN^{\dagger}}$ and $j'_{\AN^{\dagger}}$. 

For (i), consider the diagram
    \[
    \xymatrix@C=2cm@M=.25cm{      
% first row	(X's)
    \Cris^{\dagger}(X,V)
    	\ar[r]^{\varphi_{X,V*}}
	\ar[d]^{j_{\AN^{\dagger}}^{*}}& 
    \Mod(i^{-1}_{X}\,  \calO_{V})
	\ar[d]^{]j[_{V}^{*}}
\\
% new row	(U's)
    \Cris^{\dagger}(U,V)
    	\ar[r]^{\varphi_{U,V*}}
	\ar[d]^{j_{\AN^{\dagger} *}}& 
    \Mod(i^{-1}_{U}\, \calO_{V})
	\ar[d]^{]j[_{V,*}}
\\
% new row	(X's)
    \Cris^{\dagger}(X,V)
    	\ar[r]^{\varphi_{X,V*}}& 
    \Mod(i^{-1}_{X}\, \calO_{V})
    }.
    \]    
Since $E$ is a crystal the top square commutes, and the bottom square always commutes. Thus 
\[
(j_{\AN^{\dagger}*}j_{\AN^{\dagger}}^* E)_{X,V} \cong ]j[_*]j[^{\dagger} E_{X,V}
\]
and one can check, using the explicit descriptions of all relevant morphisms of topoi given in \cite{leStum:site}*{Section 2.3}, that under this isomorphism the realization of the adjunction is the adjunction; i.e., the composition
\[
E_{X, V} \to (j_{\AN^{\dagger}*}j_{\AN^{\dagger}}^* E)_{X,V} \cong\,  ]j[_*]j[^{\dagger}E_{X,V}
\]
is the adjunction morphism (alternatively this follows from the commutative diagram of the proof of \cite{leStum:site}*{3.2.1}). Finally, since the realization functor $\phi_{X,V*}$ is exact we conclude that 
\[
\left(\underline{\Gamma}^{\dagger}_ZE\right)_{X,V} = \left(\ker \left(E \to j_{\AN^{\dagger}*}j_{\AN^{\dagger}}^* E\right)\right)_{X,V} \cong \ker \left(E \to j_{\AN^{\dagger}*}j_{\AN^{\dagger}}^* E\right)_{X,V} 
\]
and by the above isomorphism this is $\ker \left(E_{X, V} \to  ]j[_*]j[^{\dagger}E_{X,V}\right)$, proving the first claim.

Claim (ii) follows from (i) since exactness of 
\[
0 \to i_!i^{-1} E_{X,V} \to E_{X,V} \to ]j[_*]j[^{-1}E_{X,V}
\]
can be checked on stalks, where it is clear. (Alternatively, this is a special case of the example of Remark \ref{R:geometricImmersions} and Proposition \ref{P:TopoiImmProp}).

Finally, by applying part (ii) twice, part (iii) amounts to showing that the natural map 
\[
u^*\left(\underline{\Gamma}^{\dagger}_ZE\right)_{X,V} \cong u^*i_!i^*E_{X,V} \to i'_!i'^*u^*E_{X,V} \cong \left(\underline{\Gamma}^{\dagger}_ZE\right)_{X',V'}
\]
induced by the isomorphism $u^*E_{X,V} \cong E_{X',V'}$ (where $i$ is the inclusion of the complement $i\colon W \subset \, ]X[_V$ of $]U[$ and $i'$ is the inclusion of the complement $i'\colon W' \subset ]X'[_{V'}$ of $]U'[$) is an isomorphism, which can be checked on stalks, where again it is clear.

%%%%%%%%%%%%%%%%%
\end{proof}
%%%%%%%%%%%%%%%%%%%%%%%%%%%%%%%%%%

%%%%%%%%%%%%%%%%%%%%%%%%%%%%%%%%%%%%%%%%%%%%%%%%%%%
\begin{remark}
%%%%%%%%%%%%%%%%%

It is easy to see from the isomorphism $(\underline{\Gamma}^{\dagger}_ZE)_{X,V} \cong i'_!i^{'-1}E_{X,V}$ of Proposition \ref{P:gammaRealization} (ii) that $(\underline{\Gamma}^{\dagger}_ZE)_{X,V}$ is generally locally finitely presented and thus  $\underline{\Gamma}^{\dagger}_ZE$ is not locally finitely presented. This will make the comparison Theorem \ref{P:agreementClosedSupportsIsoc} more subtle, since we won't be able to apply Theorem \ref{T:cohomology}.

%%%%%%%%%%%%%%%%%
\end{remark}
%%%%%%%%%%%%%%%%%%%%%%%%%%%%%%%%%%

%%%%%%%%%%%%%%%%%%%%%%%%%%%%%%%%%%%%%%%%%%%%%%%%%%%
\begin{remark}
%%%%%%%%%%%%%%%%%

Let $X/k$ be a scheme, let $U \subset X$ be an open scheme, and let $i\colon W \hookrightarrow X$ be the closed complement (as schemes) of $U$ in $X$. It is important to note that the closed complement $Z$ of $U_{\AN^{\dagger}} \subset X_{\AN^{\dagger}}$ (as topoi) is \emph{not} $W_{\AN^{\dagger}}$. In particular, for a module $E \in \Mod \calO^{\dagger}_X$, the module $\underline{\Gamma}^{\dagger}_ZE$ is not isomorphic to $i_{\AN^{\dagger}!}i_{\AN^{\dagger}}^*E$ or $i_{\AN^{\dagger}*}i_{\AN^{\dagger}}^*E$ and cannot be described in terms of $W_{\AN^{\dagger}}$.

%%%%%%%%%%%%%%%%%
\end{remark}
%%%%%%%%%%%%%%%%%%%%%%%%%%%%%%%%%%

Let $(X,V)$ be a good overconvergent variety. Recall (see the discussion preceding Definition \ref{D:jDagger}) that the set $V_0$ of rigid points of $V$ naturally has the structure of a rigid analytic variety and that the inclusion $V_0 \hookrightarrow V$ induces an equivalence of categories
\[
\Coh \calO_{V_0} \cong \Coh \calO_V.
\]
We also defined (see Definition \ref{D:jDagger}) functors $j_X^{\dagger}$ (resp. $j_{X_0}^{\dagger})$  from $\Mod \calO_V$ (resp. $\Mod \calO_{V_0}$) to itself, which are isomorphic to the functors given by the formula
\[
E \mapsto \varinjlim j'_{*}j^{'-1} E 
\]
where the limit is taken over all neighborhoods $j'\colon V' \subset V$ of $]X[_V$ in $V$ (resp. strict neighborhoods $j'\colon V' \subset V_0$ of $]X[_{V_0}$ in $V_0$). 
\\

We define now the rigid analogue of the functor $\underline{\Gamma}^{\dagger}_Z$.

%%%%%%%%%%%%%%%%%%%%%%%%%%%%%%%%%%%%%%%%%%%%%%%%%%%
\begin{definition}[\cite{leStum:rigidBook}*{Definition 5.2.10}]
\label{D:jDaggerBerth}
%%%%%%%%%%%%%%%%%

Let $(X,V)$ be a good overconvergent variety and let $Z \hookrightarrow X$ be a closed subscheme with open scheme-theoretic complement $j\colon U \subset X$. Let $E_0 \in \Mod j_{X_0}^{\dagger} \calO_{V_0}$. We define the subsheaf $\underline{\Gamma}^{\dagger,\Ber}_Z E_0$ of $E_0$ of sections supported on $Z$ as the kernel  
\[
\ker\left(E_0 \to j_{U_0}^{\dagger} E_0\right).
\]

%%%%%%%%%%%%%%%%%
\end{definition}
%%%%%%%%%%%%%%%%%%%%%%%%%%%%%%%%%%

%%%%%%%%%%%%%%%%%%%%%%%%%%%%%%%%%%%%%%%%%%%%%%%%%%%
\begin{proposition}
\label{P:agreementClosedSupports}
%%%%%%%%%%%%%%%%%

Let $S$ be a formal scheme over $\calV$ and suppose that $(S_k,S_K)$ is a good overconvergent variety. Let $p\colon X \to S_k$ be an algebraic variety over $S_k$ and let $X \subset P$ be an immersion of $X$ into a formal scheme $P/S$ such that $u\colon P \to S$ is smooth in a neighborhood of $X$  and proper at $X$ (see the paragraph before Definition \ref{D:realization}). Let $i\colon Z \hookrightarrow X$ be a closed subscheme with open scheme-theoretic complement $j\colon U \subset X$. Denote by $P_0 \hookrightarrow P_K$ the underlying rigid analytic variety of $P_K$. Let $E \in \Mod_{\fp} \calO^{\dagger}_{(X,P)}$ and $E_0 \in \Coh j^{\dagger}_{X_0}\calO_{P_0}$ such that there is an isomorphism  
\[
\phi\colon E_{X,P} \cong i_X^{-1}E_0^{\an}.
\]
Then $\phi$ induces an isomorphism
\[
(\underline{\Gamma}^{\dagger}_Z E)_{X,P} \cong i_X^{-1}(\underline{\Gamma}^{\dagger, \Ber}_Z E_0)^{\an}.
\]

%%%%%%%%%%%%%%%%%
\end{proposition}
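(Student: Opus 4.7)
The plan is to realize both sides of the asserted isomorphism as kernels of parallel morphisms, and then identify those morphisms using $\phi$ together with the comparison results of Proposition \ref{P:daggerYoga}. Write $V = P_K$, let $]j[\colon\,]U[_V \hookrightarrow\,]X[_V$ denote the induced open inclusion of tubes (which is open because $U\subset X$ is an open inclusion of schemes), and observe that the immersion $i_U$ factors as $i_X \circ\,]j[$, with an analogous factorization on the rigid side.

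First I will apply Proposition \ref{P:gammaRealization}(i) with $(X',V') = (X,P)$ to rewrite
\[
(\underline{\Gamma}^{\dagger}_Z E)_{X,P} \cong \ker\bigl(E_{X,P} \to\,]j[_*\,]j[^{\dagger} E_{X,P}\bigr).
\]
By Definition \ref{D:jDaggerBerth}, $\underline{\Gamma}^{\dagger,\Ber}_Z E_0 = \ker(E_0 \to j_{U_0}^{\dagger} E_0)$, so the heart of the proof is to show that this kernel is preserved by the composite $i_X^{-1}(-)^{\an}$ and matched to the first kernel via $\phi$. Exactness of $i_X^{-1}$ together with the equivalence $\Coh j_{X_0}^{\dagger}\calO_{V_0} \cong \Coh j_X^{\dagger}\calO_V$ of Proposition \ref{P:daggerYoga}(iv) (on which $(-)^{\an}$ degenerates to $\pi_{0,*}$) will give
\[
i_X^{-1}\bigl(\underline{\Gamma}^{\dagger,\Ber}_Z E_0\bigr)^{\an} \cong \ker\bigl(i_X^{-1} E_0^{\an} \to i_X^{-1}(j_{U_0}^{\dagger} E_0)^{\an}\bigr),
\]
and $\phi$ identifies the sources tautologically.

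It remains to identify the two targets. The key observation is that the natural map $E_0 \to j_{U_0}^{\dagger}E_0$ becomes an isomorphism after $i_{U_0}^{-1}$, since $j_{U_0}^{\dagger}$ is a direct limit over strict neighborhoods of $]U[_{V_0}$. Analytifying and using that $(j_{U_0}^{\dagger}E_0)^{\an} \in \Coh j_U^{\dagger}\calO_V$ (so Proposition \ref{P:daggerYoga}(ii) applies and gives $i_{U*}i_U^{-1}(j_{U_0}^{\dagger}E_0)^{\an} \cong (j_{U_0}^{\dagger}E_0)^{\an}$), I will conclude
\[
(j_{U_0}^{\dagger}E_0)^{\an} \cong i_{U*}i_U^{-1}E_0^{\an} = i_{U*}\,]j[^{\dagger}E_{X,P}.
\]
Factoring $i_U = i_X\circ\,]j[$ and invoking the full faithfulness of $i_{X*}$ (so $i_X^{-1}i_{X*}\cong\id$) then yields $i_X^{-1}(j_{U_0}^{\dagger}E_0)^{\an} \cong\,]j[_*\,]j[^{\dagger}E_{X,P}$, compatibly with the identification of sources via $\phi$.

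The main obstacle will be the exactness subtlety flagged in the remark just after Proposition \ref{P:gammaRealization}: $\underline{\Gamma}^{\dagger,\Ber}_Z E_0$ is generally not locally finitely presented, so Proposition \ref{P:daggerYoga}(iv) does not apply to it directly. I plan to circumvent this by forming the kernels in the ambient category of $\calO_{V_0}$-modules on the rigid side and $i_X^{-1}\calO_V$-modules on the Berkovich side (where $i_X^{-1}$ and the underived direct image $\pi_{0,*}$ are both exact), and then verifying that the two coherent terms $E_0$ and $j_{U_0}^{\dagger} E_0$ each satisfy $\mathbb{R}\pi_{0,*} = \pi_{0,*}$, so that no higher terms intrude. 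All identifications constructed above are manifestly functorial in $(X, P) \to (S_k, S)$ and in $E_0$, which in turn will give the stated compatibility with the excision exact sequence of Proposition \ref{P:excisionRelationsDagger}(ii).
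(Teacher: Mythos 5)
Your proposal is correct and follows essentially the same route as the paper: both sides are realized as kernels of the adjunction maps $E_0 \to j_{U_0}^{\dagger}E_0$ and $E_{X,P} \to\,]j[_*\,]j[^{\dagger}E_{X,P}$, the kernels are commuted past the left-exact functors $i_X^{-1}$ and $(-)^{\an}$, and the targets are matched via $(j_{U_0}^{\dagger}E_0)^{\an}\cong j_U^{\dagger}E_0^{\an}$ together with the factorization $i_U = i_X\circ\,]j[$ and $i_X^{-1}i_{X*}\cong\id$. One parenthetical slip: in the Berkovich setting the inclusion of tubes $]U[_V\hookrightarrow\,]X[_V$ induced by the open subscheme $U\subset X$ is a \emph{closed} immersion, not an open one (cf.\ Proposition \ref{P:gammaRealization}(ii)), though nothing in your argument actually uses openness.
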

%%%%%%%%%%%%%%%%%%%%%%%%%%%%%%%%%%

%%%%%%%%%%%%%%%%%%%%%%%%%%%%%%%%%%%%%%%%%%%%%%%%%%%
\begin{proof}
%%%%%%%%%%%%%%%%%

We have a sequence of isomorphisms

\begin{align*}
 i_X^{-1}\left(\underline{\Gamma}_Z^{\dagger, \Ber}E_0\right)^{\an} 
  & :=  i_X^{-1} \left( \ker \left( 
          E_0 \to j_{U_0}^{\dagger}E_0
                  \right)\right)^{\an}\\
  & \cong i_X^{-1}  \ker \left( 
          E_0^{\an} \to \left(j_{U_0}^{\dagger}E_0\right)^{\an}
                  \right)\\
  & \cong  i_X^{-1}  \ker \left( 
          E_0^{\an} \to j_{U}^{\dagger}E_0^{\an}
                  \right)\\
  & \cong   \ker \left( 
          i_X^{-1}E_0^{\an} \to \, i_X^{-1}j_{U}^{\dagger}E_0^{\an}
                  \right)\\
  & \cong   \ker \left( 
          i_X^{-1}E_0^{\an} \to \, ]j[_*]j[^{\dagger}i_X^{-1}E_0^{\an}
                  \right)\\
  & \cong   \ker \left( 
          E_{X,P} \to \, ]j[_*]j[^{\dagger}E_{X,P}
                  \right)\\
  & \cong  \left(\underline{\Gamma}_Z^{\dagger}E\right)_{X,P}
\end{align*}
where the functors $i_X^{-1}$ and $(-)^{\an}$ commute with $\ker$ because they are left exact, and all other justifications (e.g., that the composition $E_0^{\an} \to \left(j_{U_0}^{\dagger}E_0\right)^{\an} \cong j_{U}^{\dagger}E_0^{\an}$ is the adjunction) follow from the explicit descriptions of each functor and isomorphism.

%%%%%%%%%%%%%%%%%
\end{proof}
%%%%%%%%%%%%%%%%%%%%%%%%%%%%%%%%%%

Let $(C,O)$ be an overconvergent variety and let $X$ be an algebraic variety over $C$. Recall (see Definition \ref{D:MIC}) that we defined categories $\Strat^{\dagger}$, $\MIC$, $\MIC^{\dagger}$, and $\Isoc^{\dagger}$ and constructed natural maps
\[
\Cris^{\dagger}X_V/O \cong \Strat^{\dagger} i_X^{-1} \calO_V \to \MIC (X,V/O)
\] 
which induce an equivalence of categories
\[
\Mod^{\dagger}_{\fp}(X_V/O) \cong \MIC^{\dagger}(X,V/O).
\]
Let $(X,V)$ be an overconvergent variety and let $Z \hookrightarrow X$ be a closed subscheme with open scheme-theoretic complement $U$. Let $E \in \Cris^{\dagger}X_V/O$. Then by Proposition \ref{P:gammaRealization} (iii), $\underline{\Gamma}_Z^{\dagger}E$ is also a crystal, and so the realization $(\underline{\Gamma}_Z^{\dagger}E)_{X,V}$ admits a stratification. In fact, $\underline{\Gamma}_Z^{\dagger}E$ is a subsheaf of $E$ and the stratification of  $(\underline{\Gamma}_Z^{\dagger}E)_{X,V}$ is the restriction of the stratification on $E_{X,V}$. On the other hand, let $E_0 \in \Strat^{\dagger} i_{X_0}^{-1}\calO_{V_0}$. Then le Stum proves in \cite{leStum:rigidBook}*{Corollary 6.1.4} that $\underline{\Gamma}_Z^{\dagger, \Ber}E_0$ is stable under the stratification of $E_0$. This gives the following.

%%%%%%%%%%%%%%%%%%%%%%%%%%%%%%%%%%%%%%%%%%%%%%%%%%%
\begin{corollary}
\label{P:agreementClosedSupportsIsoc}
%%%%%%%%%%%%%%%%%

Under the assumptions of Proposition \ref{P:agreementClosedSupports} above, let $E \in \Mod_{\fp} \calO^{\dagger}_{}$ and $E_0 \in \Isoc^{\dagger}(X \subset \overline{X}/S)$ be such that there is an isomorphism 
\[
\phi\colon E_{X,P} \cong i_X^{-1}E_0^{\an}.
\]
Then the induced isomorphism 
\[
(\underline{\Gamma}^{\dagger}_Z E)_{X,P} \cong i_X^{-1}(\underline{\Gamma}^{\dagger, \Ber}_Z E_0)^{\an}
\]
of Proposition \ref{P:agreementClosedSupports} respects the stratifications (and thus the connections).

%%%%%%%%%%%%%%%%%
\end{corollary}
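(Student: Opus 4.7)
The plan is to leverage the fact that the isomorphism of Proposition~\ref{P:agreementClosedSupports} is constructed from a chain of canonical, functorial isomorphisms (applications of $\ker$, $i_X^{-1}$, and $(-)^{\an}$ to adjunction morphisms), and then invoke the description of both stratifications as the \emph{restrictions} of the stratifications on the ambient objects $E_{X,P}$ and $E_0$. By Proposition~\ref{P:gammaRealization}(iii), $\underline{\Gamma}^{\dagger}_Z E$ is a crystal, so its realization carries a stratification; since $\underline{\Gamma}^{\dagger}_Z E \subset E$ as subsheaves and the formation of $\underline{\Gamma}^{\dagger}_Z$ is functorial in $\AN^{\dagger}$-morphisms, this stratification on $(\underline{\Gamma}^{\dagger}_Z E)_{X,P}$ is simply obtained by restricting the stratification $\epsilon\colon p_2^{\dagger} E_{X,P} \cong p_1^{\dagger} E_{X,P}$. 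Dually, by \cite{leStum:rigidBook}*{Corollary 6.1.4}, $\underline{\Gamma}^{\dagger,\Ber}_Z E_0$ is stable under the stratification of $E_0$, so its stratification is likewise the restriction of that on $E_0$.

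The main step is then a naturality argument on $V^2 = P_K \times_{O} P_K$. Applying the same chain of functors as in the proof of Proposition~\ref{P:agreementClosedSupports} to the pulled-back sheaves $p_i^{\dagger} E_{X,P}$ and $p_i^{\dagger} i_X^{-1} E_0^{\an}$ (for $i=1,2$) yields identifications
\[
p_i^{\dagger}\bigl(\underline{\Gamma}^{\dagger}_Z E\bigr)_{X,P} \;\cong\; p_i^{\dagger} i_X^{-1}\bigl(\underline{\Gamma}^{\dagger,\Ber}_Z E_0\bigr)^{\an}
\]
fitting into a commutative cube whose two relevant faces are the stratification isomorphism $\epsilon$ (respectively its rigid analogue $\epsilon_0$) on the ambient sheaves and its restriction to the $\underline{\Gamma}^{\dagger}$-subsheaves. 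The hypothesis that $\phi\colon E_{X,P} \cong i_X^{-1} E_0^{\an}$ comes from the equivalence $\Mod_{\fp}^{\dagger}(X_P/S) \cong \Isoc^{\dagger}(X \subset \overline{X}/S)$ means precisely that the face corresponding to $\phi$ intertwines $\epsilon$ and $\epsilon_0$. Because $\ker$, $i_X^{-1}$, and $(-)^{\an}$ are all left exact (hence preserve kernels and commute past pullbacks by $p_1, p_2$ up to the canonical comparison maps), restricting this compatibility to the kernels yields the desired commutativity for the $\underline{\Gamma}^{\dagger}_Z$-versions. The stratification compatibility then transfers to connection compatibility via the standard passage $\Strat^{\dagger} \to \MIC^{\dagger}$ recalled in Section~\ref{S:overconvergentCalculus}.

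The main technical obstacle I anticipate is verifying cleanly that the formation of kernel commutes with the pullbacks $p_i^{\dagger}$ in the form needed --- specifically, one must check that $p_i^{\dagger}$ applied to the adjunction morphism $E_{X,P} \to \,]j[_{*}]j[^{\dagger} E_{X,P}$ (whose kernel is $(\underline{\Gamma}^{\dagger}_Z E)_{X,P}$) agrees, under base change along $p_i$, with the analogous adjunction on $V^2$. This is the same sort of base-change verification that underlies the proof of Proposition~\ref{P:gammaRealization}(iii), and indeed the most efficient way to carry it out is to reduce everything to the stalkwise description $(\underline{\Gamma}^{\dagger}_Z E)_{X,P} \cong i'_! i'^{-1} E_{X,P}$ given in part (ii) of that proposition (and its rigid analogue via \cite{leStum:rigidBook}*{5.1.12}), where both $i'_!$ and $i'^{-1}$ are exact and commute with $p_i^{\dagger}$ on the nose. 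Once this is done, the stratification compatibility is a purely formal consequence of the stratification compatibility for $\phi$ together with the functoriality of kernels.
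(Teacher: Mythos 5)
Your proposal is correct and follows essentially the same route as the paper's proof: both rest on the observations that the stratifications on $\underline{\Gamma}^{\dagger}_Z E$ and $\underline{\Gamma}^{\dagger,\Ber}_Z E_0$ are restrictions of those on $E$ and $E_0$ (via Proposition \ref{P:gammaRealization}(iii) and \cite{leStum:rigidBook}*{Corollary 6.1.4}), that the ambient stratifications correspond under $\phi$ by le Stum's comparison, and that the isomorphism of Proposition \ref{P:agreementClosedSupports} is functorial. Your elaboration of the base-change/kernel compatibility on $V^2$ is a reasonable unpacking of what the paper compresses into ``everything is functorial.''
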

%%%%%%%%%%%%%%%%%%%%%%%%%%%%%%%%%%

%%%%%%%%%%%%%%%%%%%%%%%%%%%%%%%%%%%%%%%%%%%%%%%%%%%
\begin{proof}
%%%%%%%%%%%%%%%%%

This is clear from the preceding construction since everything is functorial and the stratifications on $\underline{\Gamma}^{\dagger}_Z E$ and $\underline{\Gamma}^{\dagger, \Ber}_Z E_0$ are the restrictions of the stratifications on $E$ and $E_0$, which agree by \cite{leStum:site}*{Theorem 2.5.9}.

%%%%%%%%%%%%%%%%%
\end{proof}
%%%%%%%%%%%%%%%%%%%%%%%%%%%%%%%%%%

We now come to the proof of our main theorem. % Actually, we will use excision to deduce the isomorphism.

%%%%%%%%%%%%%%%%%%%%%%%%%%%%%%%%%%%%%%%%%%%%%%%%%%%
\begin{proof}[Proof of Theorem \ref{P:agreementClosedSupportsCohomology}]
%%%%%%%%%%%%%%%%%

The exact sequences 
\[
0 \to \underline{\Gamma}^{\dagger,\Ber}_{Z}E_0 \to E \to j_{U_0}^{\dagger}E \to 0
\]
(\cite{leStum:rigidBook}*{Lemma 5.2.9}) and 
\[
0 \to \underline{\Gamma}^{\dagger}_{Z}E \to E \to j_{*}j^{*}E \to 0
\]
(Proposition \ref{P:TopoiImmProp} (ii)) induce a pair of morphisms of exact triangles
\[
\xymatrix{
%%%%%%%%%%%%%%%%%%%%%%%%%%%%%%%%%%%%%%%%%%%%%%%%%%%%%%%%%%%%%%%%%%%%%%%%%%%%%%
\left(\mathbb{R}p_{\rig}\underline{\Gamma}^{\dagger, \Ber}_Z E_0\right)^{\an} 
  \ar[r] \ar[d] &
\left(\mathbb{R}p_{\rig}E_0\right)^{\an}
  \ar[r] \ar[d] &
\left(\mathbb{R}p_{\rig}j_{U_0}^{\dagger} E_0\right)^{\an}
  \ar[d] \\
%%%%%%%%%%%%%%%%%%%%%%%%%%%%%%%%%%%%%%%%%%%%%%%%%%%%%%%%%%%%%%%%%%%%%%%%%%%%%%
\mathbb{R}u_{K*} \underline{\Gamma}^{\dagger}_Z E_{X,P} \otimes i_X^{-1}\Omega^{\bullet}_{P_K/S_K}
 \ar[r] &
\mathbb{R}u_{K*}  E_{X,P}\otimes i_X^{-1}\Omega^{\bullet}_{P_K/S_K}
 \ar[r] &
\mathbb{R}u_{K*} ]j[_*]j[^{\dagger} E_{X,P}\otimes i_X^{-1}\Omega^{\bullet}_{P_K/S_K}
\\
%%%%%%%%%%%%%%%%%%%%%%%%%%%%%%%%%%%%%%%%%%%%%%%%%%%%%%%%%%%%%%%%%%%%%%%%%%%%%%
\left(\mathbb{R}p_{\AN^{\dagger}_*}\underline{\Gamma}^{\dagger}_ZE\right)_{S_k,S_K}
  \ar[r] \ar[u] &
\left(\mathbb{R}p_{\AN^{\dagger}_*}E\right)_{S_k,S_K}
  \ar[r] \ar[u] &
\left(\mathbb{R}p_{\AN^{\dagger}_*}j_*j^*E\right)_{S_k,S_K}
  \ar[u] 
}
\]
where the top vertical arrows are defined as in Theorem \ref{T:cohomology} and the bottom vertical arrows are defined as in \ref{T:deRham}. The vertical arrows of the middle column are isomorphisms by Theorems \ref{T:cohomology} and \ref{T:deRham}, and since the functors $j_*$, $]j[_*$, and $j_{U_0}^{\dagger}$ are exact, the right vertical arrows are also isomorphisms (again by Theorems \ref{T:cohomology} and \ref{T:deRham}). By the five lemma, the left column consists of quasi-isomorphisms too. The excision statement is clear since the excision long exact sequences are the long exact sequences associated to these exact triangles.

%%%%%%%%%%%%%%%%%
\end{proof}
%%%%%%%%%%%%%%%%%%%%%%%%%%%%%%%%%%

%%%%%%%%%%%%%%%%%%%%%%%%%%%%%%%%%%%%%%%%%%%%%%%%%%%
\begin{proposition}[Functorality]
\label{P:functoralityClosedSupport}
%%%%%%%%%%%%%%%%%

Let $(C,O)$ be an overconvergent variety. Let $f\colon X' \to X$ be a morphism of schemes over $C$, let $i\colon Z \hookrightarrow X$ be a closed substack with open stack-theoretic complement $j \colon U \subset X$, and let $Z' = Z \times_X X'$ and $U' = U \times_X X'$, where we denote the inclusions into $X'$ by $i'$ and $j'$. Let $E \in \Mod \calO^{\dagger}_{X/O}$ be an overconvergent module. Then there is a natural map 
\[
H^i_Z(X_{\AN^{\dagger}}, E) \to H^i_{Z'}(X'_{\AN^{\dagger}}, f^*_{\AN^{\dagger}}E);
\]
in particular (setting $E = \calO^{\dagger}$), the assignment 
\[
(Z \hookrightarrow X) \mapsto H^i_{\rig,Z}(X_{\AN^{\dagger}})
\]
is a contravariant functor from the category of closed immersions of stacks (with morphisms cartesian diagrams) to the category of $K$-vector spaces.

%%%%%%%%%%%%%%%%%
\end{proposition}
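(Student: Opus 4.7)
The strategy is to transcribe the proof of Proposition \ref{P:closedSupportRelative} into the overconvergent setting: every ingredient in that proof -- an open immersion in a ringed topos, its closed complement, a base change isomorphism, and exactness of $f^{*}$ on modules -- is available here. The morphism $f\colon X' \to X$ of fibered categories over $\AN^{\dagger}(\calV)$ induces a morphism of topoi $f_{\AN^{\dagger}}\colon X'_{\AN^{\dagger}} \to X_{\AN^{\dagger}}$, and because $X' \to X$ is a morphism over $C$ one has $\calO^{\dagger}_{X'} \cong f^{*}_{\AN^{\dagger}}\calO^{\dagger}_{X}$; hence the module-theoretic pullback coincides with the topos-theoretic inverse image and is in particular exact.

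I would first observe that, because $U'$ is by construction the $2$-fiber product $U \times_X X'$ of fibered categories over $\AN^{\dagger}(\calV)$, the square
\[
\xymatrix{
U'_{\AN^{\dagger}} \ar[r]^-{j'} \ar[d]_-{f_U} & X'_{\AN^{\dagger}} \ar[d]^-{f_{\AN^{\dagger}}} \\
U_{\AN^{\dagger}} \ar[r]^-{j} & X_{\AN^{\dagger}}
}
\]
is $2$-cartesian in topoi, with $j$ and $j'$ open immersions whose closed complements are identified with $Z_{\AN^{\dagger}}$ and $Z'_{\AN^{\dagger}}$ respectively (this identification being essentially the content of Proposition \ref{P:gammaRealization} carried out realization by realization). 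From the cartesian square one extracts the base change isomorphism $f^{*}_{\AN^{\dagger}} j_{*} j^{*} E \cong j'_{*} (j')^{*} f^{*}_{\AN^{\dagger}} E$. Combining this with the defining exact sequence $0 \to \underline{\Gamma}^{\dagger}_Z E \to E \to j_{*} j^{*} E$ of Proposition \ref{P:TopoiImmProp} (ii) and the exactness of $f^{*}_{\AN^{\dagger}}$ yields a canonical isomorphism
\[
f^{*}_{\AN^{\dagger}} \underline{\Gamma}^{\dagger}_Z E \cong \underline{\Gamma}^{\dagger}_{Z'} f^{*}_{\AN^{\dagger}} E.
\]
Composing the adjunction unit $\underline{\Gamma}^{\dagger}_Z E \to (f_{\AN^{\dagger}})_{*} f^{*}_{\AN^{\dagger}} \underline{\Gamma}^{\dagger}_Z E$ with this isomorphism and applying $\mathbb{R}\Gamma(X_{\AN^{\dagger}}, -)$ produces the desired natural map $H^i_Z(X_{\AN^{\dagger}}, E) \to H^i_{Z'}(X'_{\AN^{\dagger}}, f^{*}_{\AN^{\dagger}} E)$ on passing to cohomology.

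Contravariant functoriality of $(Z \hookrightarrow X) \mapsto H^i_{\rig, Z}(X_{\AN^{\dagger}})$ under cartesian diagrams will then follow from the standard compatibility $(gf)^{*}_{\AN^{\dagger}} \cong f^{*}_{\AN^{\dagger}} g^{*}_{\AN^{\dagger}}$ together with the naturality of base change isomorphisms under vertical composition of cartesian squares. The one point requiring care is the identification of $Z'_{\AN^{\dagger}}$ with the topos-theoretic closed complement of $U'_{\AN^{\dagger}}$ inside $X'_{\AN^{\dagger}}$ -- not merely something canonically equivalent to it; this is the main obstacle, but it is essentially dual to the base change statement above and is resolved by the same fiber-product description of the localization that makes the cartesian square diagrammatic.
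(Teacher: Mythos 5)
Your proposal is correct and follows essentially the same route as the paper, whose entire proof is the remark that the statement is a translation of Proposition \ref{P:closedSupportRelative} to the overconvergent site; you have simply written out that translation explicitly (base change isomorphism for $j_*j^*$, exactness of $f^*_{\AN^{\dagger}}$ giving $f^{*}_{\AN^{\dagger}}\underline{\Gamma}^{\dagger}_Z E \cong \underline{\Gamma}^{\dagger}_{Z'}f^{*}_{\AN^{\dagger}}E$, then adjunction and derived global sections). The identification of $Z'_{\AN^{\dagger}}$ with the closed complement of $U'_{\AN^{\dagger}}$ that you flag is indeed the only point of substance, and it is handled exactly as you describe.
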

%%%%%%%%%%%%%%%%%%%%%%%%%%%%%%%%%%

Of course, for a map $X/O \to T$, with $T$ a fibered category over $\AN^{\dagger}(C,O)$, there is a similar map of relative cohomology with supports in $Z$. Also, the same statement holds if we replace everything by its good variant.

%%%%%%%%%%%%%%%%%%%%%%%%%%%%%%%%%%%%%%%%%%%%%%%%%%%
\begin{proof}
%%%%%%%%%%%%%%%%%

This is just a translation of Proposition \ref{P:closedSupportRelative} to the overconvergent site.

%%%%%%%%%%%%%%%%%
\end{proof}
%%%%%%%%%%%%%%%%%%%%%%%%%%%%%%%%%%

%%%%%%%%%%%%%%%%%%%%%%%%%%%%%%%%%%%%%%%%%%%%%%%%%%%
\begin{remark}
%%%%%%%%%%%%%%%%%  

Using the notation of Proposition \ref{P:agreementClosedSupports}, let $E \in \Mod_{\fp} \calO^{\dagger}_{(X,P)}$ and let $E_0 \in \Isoc^{\dagger}(X \subset \overline{X}/S)$. Then the techniques used in Proposition \ref{P:agreementClosedSupports} to show that $\left(\underline{\Gamma}^{\dagger}_Z E\right)_{X,P} \cong i_X^{-1} \left(\underline{\Gamma}_Z^{\dagger, \Ber}E_0\right)^{\an}$ also show that this functorality map agrees with the classical functorality map of Berthelot \cite{leStum:rigidBook}*{6.3.5} -- both arise from the very general constructions of \cite{SGA4:I} and again the work is to show that the adjunctions used in Proposition \ref{P:closedSupportRelative} match up in both contexts.

%%%%%%%%%%%%%%%%%
\end{remark}
%%%%%%%%%%%%%%%%%%%%%%%%%%%%%%%%%%

%****************************************************************************
%****************************************************************************
%****************************************************************************
\section{Overconvergent cohomology with closed supports and stacks}
\label{S:main}
%****************************************************************************
%****************************************************************************
 
Let $C$ be a site and let $u \colon D \to C$ be a fibered category. Suppose moreover that every arrow of $C$ is cartesian; it then follows that that $u$ commutes with fiber products. Recall from (\ref{E:cocontinuous}) that if we endow $D$ with the induced topology, then the functor $u$ is then both continuous and cocontinuous. In particular, by (\ref{E:cocontinuous}) we get a triple of morphisms
 \[
  \xymatrix@C=2cm@M=.25cm{
	\widetilde{D}
		\ar@<7pt>[r]^{u_{!}}\ar@<-7pt>[r]_{u_*} &
	\widetilde{C}
		\ar[l]|{u^{*}}.
  }
  \]
such that each arrow is left adjoint to the arrow below. Since $u^*$ has both a left and right adjoint it is exact and thus the pair $(u^*,u_*)\colon \widetilde{D} \to \widetilde{C}$ defines a morphism of topoi. On the other hand, $u_!$ is generally not exact (almost any non-trivial example will exhibit this). Finally, we remark that $u_!$ does not take abelian sheaves to a abelian sheaves; nonetheless $u^*\colon \Ab \widetilde{C} \to \Ab \widetilde{D}$ has a \emph{different} left adjoint, $u_!^{\ab}$, which we also denote by $u_!$ when there is no confusion (see (\ref{p:abelianExtensionZero})). In particular, $u^*$ takes injective abelian sheaves to injective abelian sheaves.
\\

% However, when $u$ is fibered in groupoids, every arrow of $D$ is cartesian. One can then check (via \cite{stacks-project}*{Lemma \href{http://math.columbia.edu/algebraic_geometry/stacks-git/locate.php?tag=00X4}{00X5}}) that $u_!$ is exact, and the pair $(u_!, u^*)\colon \widetilde{C} \to \widetilde{D}$ is then a morphism of topoi.

\defi{Algebraic Spaces and Stacks}: We refer to \cite{Knutson:algebraicSpaces} and \cite{LaumonMB:champs} for basic definitions regarding algebraic spaces and stacks. Note in particular the standard convention that a representable morphism of stacks is represented by \emph{algebraic spaces}. Actually, most of the theory works for arbitrary fibered categories, but particular examples and theorems will require algebraicity and finiteness assumptions, which will be clearly stated when necessary.

%%%%%%%%%%%%%%%%%%%%%%%%%%%%%%%%%%%%%%%%%%%%%%%%%%%%
\begin{definition}
\label{D:overconvergentStack}
%%%%%%%%%%%%%%%%%

Let $\calX \to \Sch_k$ be a fibered category. We define the \defi{overconvergent site} $\AN^{\dagger}(\calX)$ of $\calX$ to be the category $\AN^{\dagger}(\calV) \times_{\Sch_k}\calX$ with the topology induced by the projection $\AN^{\dagger}(\calX) \to \AN^{\dagger}(\calV)$. We define the \defi{good overconvergent site} $\AN^{\dagger}_{g}(\calX)$ similarly.

%%%%%%%%%%%%%%%%%
\end{definition}
%%%%%%%%%%%%%%%%%%%%%%%%%%%%%%%%%%%

%%%%%%%%%%%%%%%%%%%%%%%%%%%%%%%%%%
\begin{remark}
%%%%%%%%%%%%%%%%%

Concretely, an object of $\AN^{\dagger}(\calX)$ is an overconvergent variety $(X,V)$ together with an object of the fiber category $\calX(X)$; by the 2-Yoneda lemma this data is equivalent to an overconvergent variety $(X,V)$ and a map of  categories $\Sch_X \to \calX$ fibered over $\Sch_k$. In particular, for a presheaf $T$ on $\Sch_k$ with associated fibered category $\Sch_T \to \Sch_k$, $\AN^{\dagger}(T)$ defined as before is equivalent to $\AN^{\dagger}(\Sch_T)$.

%%%%%%%%%%%%%%%%%
\end{remark}
%%%%%%%%%%%%%%%%%%%%%%%%%%%%%%%%%%

%%%%%%%%%%%%%%%%%%%%%%%%%%%%%%%%%%%%%%%%%%%%%%%%%%%%
\begin{definition}
\label{D:isocrystalsStack}
%%%%%%%%%%%%%%%%%

Let $\calX$ be a fibered category over $k$. We define the \defi{sheaf of overconvergent functions} $\calO^{\dagger}_{\calX_{\g}}$ to be the pullback $u^{-1}\calO^{\dagger}_{\calV_{\g}}$ with respect to the projection $u\colon\AN^{\dagger}_{\g}(\calX) \to \AN^{\dagger}_{\g}(\calV)$.

%%%%%%%%%%%%%%%%%
\end{definition}
%%%%%%%%%%%%%%%%%%%%%%%%%%%%%%%%%%%

Of course, the main case we consider will be when $\calX$ is an algebraic stack over $k$. As before we will consider the categories $\Mod \calO^{\dagger}_{\calX_{\g}}$, $\Mod_{\fp} \calO^{\dagger}_{\calX_{\g}}$ and $\Cris^{\dagger} \calX_{\g}$. A map $f\colon \calX \to \calY$ of fibered categories induces a morphism $f_{\AN^{\dagger}_{\g}} \colon \calX_{\AN_{\g}^{\dagger}} \to \calY_{\AN_{\g}^{\dagger}}$ of topoi, and to any abelian sheaf $F \in \calX_{\AN_{\g}^{\dagger}}$ and for any good overconvergent variety $(X,V) \in \AN^{\dagger}_{g} \calX$ one can study the cohomology $\mathbb{R}f_{\AN^{\dagger}_{g}*}F$ and the realization $F_{X,V}$.

%%%%%%%%%%%%%%%%%%%%%%%%%%%%%%%%%%%%%%%%%%%%%%%%%%%%
\begin{remark}
%%%%%%%%%%%%%%%%%

Functorality and excision for stacks (Propositions \ref{P:functoralityClosedSupport}, \ref{P:excisionRelationsDagger}) follow formally as in Subsection \ref{S:overconvergentExcision}.  On the other hand, Theorem \ref{P:agreementClosedSupportsCohomology} (agreement) doesn't make sense for stacks (because there is no classical cohomological object to compare with overconvergent cohomology).

%%%%%%%%%%%%%%%%%
\end{remark}
%%%%%%%%%%%%%%%%%%%%%%%%%%%%%%%%%%%%%%%%%%%%%%%%%%%%

%%%%%%%%%%%%%%%%%%%%%%%%%%%%%%%%%%%%%%%%%%%%%%%%%%%%
\begin{remark}
%%%%%%%%%%%%%%%%%

It is worth noting here that the usual issues of functorality surrounding the lisse-\'etale site (e.g. functorality of crystalline cohomology for stacks \cite{Olsson:Crystal}) are not issues here.

%%%%%%%%%%%%%%%%%
\end{remark}
%%%%%%%%%%%%%%%%%%%%%%%%%%%%%%%%%%% 

We end by applying cohomological descent \cite{Zureick-Brown2014}*{Theorem 1.1} to prove the finiteness of rigid cohomology with support in a closed subscheme.

%%%%%%%%%%%%%%%%%%%%%%%%%%%%%%%%%%%%%%%%%%%%%%%%%%%
\begin{proposition}
\label{P:finiteness}
%%%%%%%%%%%%%%%%%

Let $f\colon X \to \Spec k$ be a separated algebraic stack of finite type over $k$ and let $Z \subset X$ be a closed substack. Then for every $i \geq 0$, $H_Z^i(\AN^{\dagger}_{\g} X)$ is a finite dimensional $K$-vector space. 

%%%%%%%%%%%%%%%%%
\end{proposition}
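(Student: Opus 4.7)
The plan is to reduce to the already-established finiteness of (unsupported) rigid cohomology for separated finite-type algebraic stacks by means of the excision long exact sequence. Let $j \colon U \hookrightarrow X$ be the open substack complementary to $Z$, and let $f \colon X \to \Spec k$ and $f_U \colon U \to \Spec k$ denote the structure maps. Since $X$ is separated and of finite type over $k$ and $Z \subset X$ is closed, $U$ is also a separated algebraic stack of finite type over $k$. The inclusion $j$ is a representable open immersion in the sense of Definition \ref{D:representable}: for any overconvergent variety $(X', V')$ mapping to $X$, the pullback of $U$ is the open substack $U \times_X X'$, which is represented by an open subscheme of $X'$ whenever $X'$ is a scheme.

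First, I would apply Proposition \ref{P:excisionRelationsDagger} (ii) with $E = \calO^{\dagger}_{X_{\g}}$ to obtain the long exact sequence
\[
\cdots \to H^i_Z(\AN^{\dagger}_{\g} X) \to H^i(\AN^{\dagger}_{\g} X) \to H^i(\AN^{\dagger}_{\g} U) \to H^{i+1}_Z(\AN^{\dagger}_{\g} X) \to \cdots
\]
of $K$-vector spaces (using that $j^* \calO^{\dagger}_{X_{\g}} = \calO^{\dagger}_{U_{\g}}$, so that the middle two terms are $H^i(\AN^{\dagger}_{\g} X, \calO^{\dagger}_{X_{\g}})$ and $H^i(\AN^{\dagger}_{\g} U, \calO^{\dagger}_{U_{\g}})$). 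Next, I would invoke the finiteness result \cite{Zureick-Brown2014}*{Theorem 1.1}, which asserts that the absolute overconvergent cohomology $H^i(\AN^{\dagger}_{\g} \calY)$ is a finite-dimensional $K$-vector space for every separated algebraic stack $\calY$ of finite type over $k$. Applying this to both $X$ and $U$ shows that in the excision sequence, the groups flanking $H^i_Z(\AN^{\dagger}_{\g} X)$ are finite-dimensional. A standard diagram-chase (writing $H^i_Z$ as an extension of the kernel of $H^i(X) \to H^i(U)$ by the cokernel of $H^{i-1}(X) \to H^{i-1}(U)$) then forces $H^i_Z(\AN^{\dagger}_{\g} X)$ to be finite-dimensional as well.

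The main obstacle is really just verifying that the hypotheses of the cited finiteness theorem are met for both $X$ and $U$ and that the excision sequence of Proposition \ref{P:excisionRelationsDagger} applies in the stack setting. The first point is routine: open substacks of separated finite-type stacks inherit both properties. The second point is addressed in the discussion preceding the statement: Proposition \ref{P:excisionRelationsDagger}, Proposition \ref{P:functoralityClosedSupport}, and more generally the formalism of open/closed subtopoi carry over verbatim from schemes to arbitrary fibered categories over $\AN^{\dagger}(\calV)$, since they are purely topos-theoretic. Thus no new input beyond the scheme-level excision formalism and the previously-established stack-level finiteness is required.
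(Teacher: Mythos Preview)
Your reduction via excision is exactly the final step of the paper's proof, and your observation that $U$ inherits separatedness and finite type is correct. The issue is the citation you lean on for the unsupported case. In this paper, \cite{Zureick-Brown2014}*{Theorem 1.1} is a \emph{cohomological descent} statement, not a finiteness theorem; it does not assert that $H^i(\AN^{\dagger}_{\g} \calY)$ is finite-dimensional for stacks. Indeed, the unsupported finiteness for stacks is precisely what Proposition~\ref{P:finiteness} is establishing---it is not available to be cited.

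The paper's proof supplies exactly the content you have skipped. It first treats the case $Z = X$ (no supports): choose a projective surjection $p_0\colon X' \to X$ from a quasi-projective scheme (Olsson), form the simplicial fiber products $X'_i$, and invoke the descent spectral sequence of \cite{Zureick-Brown2014}*{Corollary 3.4} to get $H^j(\AN^{\dagger}_{\g} X'_i) \Rightarrow H^{i+j}(\AN^{\dagger}_{\g} X)$. When $X$ is an algebraic space the $X'_i$ are quasi-projective schemes, so Theorem~\ref{T:cohomologyCorollary} identifies $H^j(\AN^{\dagger}_{\g} X'_i)$ with classical rigid cohomology, which is finite-dimensional by Kedlaya \cite{kedlaya:finitenessCoefficients}; the spectral sequence then gives finiteness for algebraic spaces, and a second pass gives it for stacks. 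Only \emph{then} does the paper invoke the excision sequence of Proposition~\ref{P:excisionRelationsDagger} to deduce the supported case, exactly as you do. So your skeleton is right, but the bulk of the argument---descent to schemes and Kedlaya's input---is missing from your sketch and cannot be replaced by the citation you gave.
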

%%%%%%%%%%%%%%%%%%%%%%%%%%%%%%%%%%

%%%%%%%%%%%%%%%%%%%%%%%%%%%%%%%%%%%%%%%%%%%%%%%%%%%
\begin{proof}
%%%%%%%%%%%%%%%%%

First we do the case without supports (i.e., $Z = X$). Let $p_0\colon X' \to X$ be a projective surjection from a scheme $X'$ which is quasi-projective over $k$ (which exists by \cite{Olsson:Hypercover}*{Theorem 1.1}), and as usual denote by $p_i\colon X'_i \to X$ the $(i+1)$-fold fiber product of $p_0$. Then by \cite{Zureick-Brown2014}*{Corollary 3.4 and Remark 3.3} (noting that by definition $p_i^*\calO^{\dagger}_{X_{\g}} = \calO^{\dagger}_{X_{i,\g}})$ there is a spectral sequence
\[
H^j(\AN^{\dagger}_{\g} X'_i) \Rightarrow H^{i+j}(\AN^{\dagger}_{\g} X).
\]
When $X$ is an algebraic space, $X'_i$ is a scheme, so by Theorem \ref{T:cohomologyCorollary} (noting that $X'_i$ is quasi-projective and thus the structure morphism is realizable), there is an isomorphism
\[
H^j(\AN^{\dagger}_{\g} X'_i) \cong H^j_{\rig}(X'_i)
\]
which is finite dimensional by \cite{kedlaya:finitenessCoefficients}*{Theorem 1.2.1}, so by the spectral sequence $H^i(X)$ is finite dimensional as well. Now that we know the result for an algebraic space, the case of $X$ a stack follows directly from the spectral sequence. Finally, the case with support in $Z$ follows from the excision exact sequence of Proposition \ref{P:excisionRelationsDagger}.

%%%%%%%%%%%%%%%%%
\end{proof}
%%%%%%%%%%%%%%%%%%%%%%%%%%%%%%%%%% 

%%%%%%%%%%%%%%%%%%%%%%%%%%%%%%%%%%%%%%%%%%%%%%%%%%%
\begin{remark}
%%%%%%%%%%%%%%%%%

Classically, many results only hold for the category $F\operatorname{-Isoc}^{\dagger}(X \subset \overline{X})$ of isocrystals with Frobenius action (see \cite{leStum:rigidBook}*{Definition 8.3.2}).  One can define an analogue on the overconvergent site, and the same argument will show that the cohomology of an $F$-isocrystal will be finite dimensional.

%%%%%%%%%%%%%%%%%
\end{remark}
%%%%%%%%%%%%%%%%%%%%%%%%%%%%%%%%%% 

\begin{bibdiv}
\begin{biblist}

\bib{abbotKR:picard}{article}{
      author={Abbott, Timothy~G.},
      author={Kedlaya, Kiran~S.},
      author={Roe, David},
       title={Bounding {P}icard numbers of surfaces using p-adic cohomology},
        date={2007Jan},
      eprint={math/0601508},
         url={http://arxiv.org/abs/math/0601508},
}

\bib{Berkovich:nonArchEtaleCoh}{article}{
      author={Berkovich, Vladimir~G.},
       title={\'{E}tale cohomology for non-{A}rchimedean analytic spaces},
        date={1993},
        ISSN={0073-8301},
     journal={Inst. Hautes \'Etudes Sci. Publ. Math.},
      number={78},
       pages={5\ndash 161 (1994)},
         url={http://www.numdam.org/item?id=PMIHES_1993__78__5_0},
      review={\MR{MR1259429 (95c:14017)}},
}

\bib{Berkovich:contractiblity}{article}{
      author={Berkovich, Vladimir~G.},
       title={Smooth {$p$}-adic analytic spaces are locally contractible},
        date={1999},
        ISSN={0020-9910},
     journal={Invent. Math.},
      volume={137},
      number={1},
       pages={1\ndash 84},
         url={http://dx.doi.org/10.1007/s002220050323},
      review={\MR{MR1702143 (2000i:14028)}},
}

\bib{BerthelotO:notesCrystalline}{book}{
      author={Berthelot, Pierre},
      author={Ogus, Arthur},
       title={Notes on crystalline cohomology},
   publisher={Princeton University Press},
     address={Princeton, N.J.},
        date={1978},
        ISBN={0-691-08218-9},
      review={\MR{MR0491705 (58 \#10908)}},
}

\bib{brown:thesis}{book}{
      author={Brown, David~Michael},
       title={Rigid {C}ohomology for {A}lgebraic {S}tacks},
   publisher={ProQuest LLC, Ann Arbor, MI},
        date={2010},
        ISBN={978-1124-55124-1},
  url={http://gateway.proquest.com/openurl?url_ver=Z39.88-2004&rft_val_fmt=info:ofi/fmt:kev:mtx:dissertation&res_dat=xri:pqdiss&rft_dat=xri:pqdiss:3445457},
        note={Thesis (Ph.D.)--University of California, Berkeley},
      review={\MR{2941729}},
}

\bib{ChiarellottoT:etaleRigidDescent}{article}{
      author={Chiarellotto, Bruno},
      author={Tsuzuki, Nobuo},
       title={Cohomological descent of rigid cohomology for \'etale coverings},
        date={2003},
        ISSN={0041-8994},
     journal={Rend. Sem. Mat. Univ. Padova},
      volume={109},
       pages={63\ndash 215},
      review={\MR{MR1997987 (2004d:14016)}},
}

\bib{EGAI}{article}{
       label={EGA I},
      author={Grothendieck, A.},
       title={\'{E}l\'ements de g\'eom\'etrie alg\'ebrique. {I}. {L}e langage
  des sch\'emas},
        date={1960},
        ISSN={0073-8301},
     journal={Inst. Hautes \'Etudes Sci. Publ. Math.},
      number={4},
       pages={228},
      review={\MR{MR0163908 (29 \#1207)}},
}

\bib{Illusie:deRhamWitt}{article}{
      author={Illusie, Luc},
       title={Complexe de de\thinspace {R}ham-{W}itt et cohomologie
  cristalline},
        date={1979},
        ISSN={0012-9593},
     journal={Ann. Sci. \'Ecole Norm. Sup. (4)},
      volume={12},
      number={4},
       pages={501\ndash 661},
      review={\MR{MR565469 (82d:14013)}},
}

\bib{Illusie:crystalSurvey}{article}{
      author={Illusie, Luc},
       title={Crystalline cohomology},
        date={1994},
      volume={55},
       pages={43\ndash 70},
      review={\MR{Illusie:bookMotives (95a:14021)}},
}

\bib{Kedlaya:hyperellipticCounting}{article}{
      author={Kedlaya, Kiran~S.},
       title={Counting points on hyperelliptic curves using
  {M}onsky-{W}ashnitzer cohomology},
        date={2001},
        ISSN={0970-1249},
     journal={J. Ramanujan Math. Soc.},
      volume={16},
      number={4},
       pages={323\ndash 338},
      review={\MR{MR1877805 (2002m:14019)}},
}

\bib{Kedlaya:zeta}{article}{
      author={Kedlaya, Kiran~S.},
       title={Computing zeta functions via {$p$}-adic cohomology},
        date={2004},
      volume={3076},
       pages={1\ndash 17},
      review={\MR{MR2137340 (2006a:14033)}},
}

\bib{kedlaya:finitenessCoefficients}{article}{
      author={Kedlaya, Kiran~S.},
       title={Finiteness of rigid cohomology with coefficients},
        date={2006},
        ISSN={0012-7094},
     journal={Duke Math. J.},
      volume={134},
      number={1},
       pages={15\ndash 97},
         url={http://dx.doi.org/10.1215/S0012-7094-06-13412-9},
      review={\MR{MR2239343 (2007m:14021)}},
}

\bib{Knutson:algebraicSpaces}{article}{
      author={Knutson, Donald},
       title={Algebraic spaces},
        date={1971},
       pages={vi+261},
      review={\MR{MR0302647 (46 \#1791)}},
}

\bib{LaumonMB:champs}{book}{
      author={Laumon, G{\'e}rard},
      author={Moret-Bailly, Laurent},
       title={Champs alg\'ebriques},
      series={Ergebnisse der Mathematik und ihrer Grenzgebiete. 3. Folge. A
  Series of Modern Surveys in Mathematics [Results in Mathematics and Related
  Areas. 3rd Series. A Series of Modern Surveys in Mathematics]},
   publisher={Springer-Verlag},
     address={Berlin},
        date={2000},
      volume={39},
        ISBN={3-540-65761-4},
      review={\MR{MR1771927 (2001f:14006)}},
}

\bib{leStum:rigidBook}{book}{
      author={le~Stum, Bernard},
       title={Rigid cohomology},
      series={Cambridge Tracts in Mathematics},
   publisher={Cambridge University Press},
     address={Cambridge},
        date={2007},
      volume={172},
        ISBN={978-0-521-87524-0},
      review={\MR{MR2358812 (2009c:14029)}},
}

\bib{leStum:site}{article}{
      author={le~Stum, Bernard},
       title={The overconvergent site},
        date={2010},
     journal={M\'em. Soc. Math. Fr.},
}

\bib{Olsson:Hypercover}{article}{
      author={Olsson, Martin~C.},
       title={On proper coverings of {A}rtin stacks},
        date={2005},
        ISSN={0001-8708},
     journal={Adv. Math.},
      volume={198},
      number={1},
       pages={93\ndash 106},
      review={\MR{MR2183251 (2006h:14003)}},
}

\bib{Olsson:Crystal}{book}{
      author={Olsson, Martin~C.},
       title={Crystalline cohomology of algebraic stacks and {H}yodo-{K}ato
  cohomology},
        date={2007},
      number={316},
        ISBN={978-2-85629-249-5},
      review={\MR{MR2451400}},
}

\bib{SGA4:I}{book}{
       label={SGA I},
       title={Th\'eorie des topos et cohomologie \'etale des sch\'emas. {T}ome
  1: {T}h\'eorie des topos},
      series={Lecture Notes in Mathematics, Vol. 269},
   publisher={Springer-Verlag},
     address={Berlin},
        date={1972},
        note={S{\'e}minaire de G{\'e}om{\'e}trie Alg{\'e}brique du Bois-Marie
  1963--1964 (SGA 4), Dirig{\'e} par M. Artin, A. Grothendieck, et J. L.
  Verdier. Avec la collaboration de N. Bourbaki, P. Deligne et B. Saint-Donat},
      review={\MR{MR0354652 (50 \#7130)}},
}

\bib{SGA4:II}{book}{
       label={SGA II},
       title={Th\'eorie des topos et cohomologie \'etale des sch\'emas. {T}ome
  2},
      series={Lecture Notes in Mathematics, Vol. 270},
   publisher={Springer-Verlag},
     address={Berlin},
        date={1972},
        note={S{\'e}minaire de G{\'e}om{\'e}trie Alg{\'e}brique du Bois-Marie
  1963--1964 (SGA 4), Dirig{\'e} par M. Artin, A. Grothendieck et J. L.
  Verdier. Avec la collaboration de N. Bourbaki, P. Deligne et B. Saint-Donat},
      review={\MR{MR0354653 (50 \#7131)}},
}

\bib{stacks-project}{misc}{
      author={{Stacks Project Authors}, The},
       title={{\itshape Stacks Project}},
         how={\url{http://stacks.math.columbia.edu}},
}

\bib{Vistoli:fiberedCategories}{article}{
      author={Vistoli, Angelo},
       title={Grothendieck topologies, fibered categories and descent theory},
        date={2005},
      volume={123},
       pages={1\ndash 104},
      review={\MR{MR2223406}},
}

\bib{Zureick-Brown2014}{article}{
      author={Zureick-Brown, David},
       title={Cohomological descent on the overconvergent site},
        date={2014},
     journal={to appear, Research in the Mathematical Sciences},
}

\end{biblist}
\end{bibdiv}

\end{document}